\renewcommand{\MR}[1]{\href{http://www.ams.org/mathscinet-getitem?mr=#1}{MR#1}}
\numberwithin{equation}{section}
\newcommand{\Span}{\mathrm{Span}}
\newcommand{\Nor}{\mathrm{Nor}}
\newcommand{\C}{\mathbb{C}}
\newcommand{\cA}{\mathcal{A}}
\newcommand{\cB}{\mathcal{B}}
\newcommand{\cN}{\mathcal{N}}
\newcommand{\Lcal}{{\mathcal{L}}}
\newcommand{\Rcal}{{\mathcal{R}}}
\newcommand{\asX}{\langle X \rangle}
\newcommand{\asoX}{\langle X \rangle}
\newcommand{\la}{{\triangleright}}
\newcommand{\ra}{{\triangleleft}}
\newcommand{\N}{\mathbb{N}}
\newcommand{\LL}{\mathcal{L}}
\newcommand{\Dcal}{{\mathcal{D}}}
\newcommand{\Ocal}{{\mathcal{O}}}
\newcommand{\Sym}{\mathop{{\rm Sym}}\nolimits}
\renewcommand{\tilde}{\widetilde}
\def\gldim{\operatorname {gl\,dim}}
\def\id{\text{id}}
\theoremstyle{plain}
\newtheorem{thm}{Theorem}[section]
\newtheorem{pro}[thm]{Proposition}
\newtheorem{lem}[thm]{Lemma}
\newtheorem{cor}[thm]{Corollary}
\theoremstyle{definition}
\newtheorem{ex}[thm]{Example}
\newtheorem{convention}[thm]{Convention}
\newtheorem{dfn}[thm]{Definition}
\newtheorem{que}[thm]{Question}
\newtheorem{rmk}[thm]{Remark}
\newcommand{\LM}{\mathbf{LM}}
\newcommand{\extd}{{\rm d}}
\newcommand{\tens}{\otimes}
\renewcommand{\k}{{\bf k}}
\newcommand{\del}{\partial}
\newcommand{\Ann}{{\rm Ann}}
\title{Quadratic algebras associated to permutation idempotent solutions of the YBE}
\keywords{
Yang-Baxter equation, braided monoids, quadratic algebras, Veronese subalgebras, Veronese maps, Segre products, noncommutative
geometry}
\subjclass[2010]{Primary 16T25, 16S37, 16S38,  14A22, 16S15, 16T20, 46L87, 58B32}
\author{Tatiana Gateva-Ivanova$^*$}
\address{Max Planck Institute for Mathematics, Vivatsgasse 7, 53111 Bonn, Germany,
and American University in
Bulgaria, 2700 Blagoevgrad, Bulgaria} \email{tatyana@aubg.edu}
\author{Shahn Majid}
\address{School of Mathematical Sciences\\ 327 Mile End Rd, London E1 4NS, UK}
\email{s.majid@qmul.ac.uk}
\thanks{${}^*$ partially supported by the Max Planck Institute for Mathematics,
Bonn,
by the Abdus Salam International Centre for Theoretical Physics, Trieste, and
by Grant KP-06 N 32/1, 07.12.2019 of the Bulgarian National Science Fund.
}
\begin{document}
\date{\today. Ver~1.2}

\begin{abstract} We study the quadratic algebras $\cA(\k,X,r)$ associated to a class of strictly braided but
idempotent set-theoretic solutions $(X,r)$ of the Yang-Baxter or braid relations. In the invertible case, these algebras would be
analogues of braided-symmetric algebras or `quantum affine spaces' but due to $r$ being idempotent they  have very different
properties. We show that all $\cA(\k,X,r)$ for $r$ of a certain permutation idempotent type are isomorphic for a given $n=|X|$,
leading to canonical algebras $\cA(\k,n)$. We study the properties of these both via Veronese subalgebras and Segre products and in
terms of noncommutative differential geometry. We also obtain new results on general PBW algebras which we apply in the
permutation idempotent case.\end{abstract}
\maketitle

\section{Introduction}
\label{Intro}

The linear braid or Yang-Baxter equation (YBE) for a map  $R : V\otimes V \to
V\otimes V$ on a vector space $V$ was extensively studied in the 1980s and solutions lead both to knot invariants in nice case and
to quantum groups, such as the  coquasitriangular bialgebras
$A(R)$ and their Hopf algebra quotients, covariant quantum planes and other structures, see e.g. \cite{MajidQG, FRT,Ma88}. Early
on, V.G. Drinfeld \cite{D}, proposed to also consider the parallel equations for  $r:X\times X\to X\times X$ where $X$ is a set,
and  by now numerous results in this setting have been found, particularly in the involutive case, e.g.
\cite{ESS,GI96,GI04,GI04s,GI12,GIM08,GIM11,GI18,GIVB, Vendramin16, rump}. Non-involutive or strictly braided set-theoretic solutions here are less well understood but of increasing interest, starting with \cite{soloviev, Lu2000}. They have been used to produce knot and virtual knot invariants\cite{nelson} and, more recently, certain non-involutive solutions have been shown to arise from skew braces \cite{Gua_Ven}. Thus, non-involutive solutions  and some of their related algebraic structures have attracted significant further attention, see for instance \cite{Gua_Ven,  ce_smok_vendr19, smok-vendr18, Bachiller18, Vendramin_prob, AL_vendr20, Br19, Colazzo22, Doikou21, doikou_smok21, doikou_smok23, doikou_ryb23, GI21, GIM08, BB23} and references therein.

On the algebra side, we will be particularly interested in quadratic `Yang-Baxter' algebras $\cA(\k , X, r)$  over a field $\k$ proposed in  \cite[Sec 6]{GIM11}  as analogues
of the `quantum planes' in the linear R-matrix theory. In that work, the main results were for $r$ a multipermutation
(square-free) solution of level two. It is also known\cite{GIVB,GI12} that when $X$ is finite  and $r$ is nondegenerate and
involutive then  $\cA(\k,X,r)$ has remarkable algebraic, homological and
combinatorial properties. Although in most cases not even a PBW algebra,  it shares various good
properties of the commutative polynomial ring $\k [x_1, \cdots , x_n]$, namely finite global dimension, polynomial growth,
Cohen-Macaulay, Koszul, and is a Noetherian domain.  More recently, in \cite{AGG} another class of quadratic PBW
algebras called `noncommutative projective spaces' was investigated
and analogues of  Veronese and Segre morphisms between such noncommutative projective spaces were
introduced and studied. In this class, the quadratic relations were almost commutative, allowing the formulation of a relevant
theory of noncommutative Gr\"{o}bner bases. It is natural to formulate similar problems for more general finite solutions $(X,r)$,
but the Yang-Baxter algebras $\cA(\k , X, r)$ in general have complicated quadratic relations which in most cases do not form
Gr\"{o}bner bases. These relations remain complicated even when $\cA(\k,X,r)$ is a PBW algebra, so we need more sophisticated
arguments and techniques, see for example \cite{GI_Veronese, GI23}.

Another starting point for the present work is recent work of  Colazzo et al\cite{Colazzo22,Colazzo23} which introduced a theory of left-nondegenerate  idempotent set-theoretic solutions. These are a particular class of non-involutive solutions which we believe deserve more study. Our main results are for a subclass of `permutation idempotent' solutions and their associated quadratic
algebras first studied in \cite[Prop.~3.15]{Colazzo22}. These depend on a bijection which we denote $f:X\to X$ and have
the form $r_f(x,y)=(f(y),y)$.
 We consider the class $P_n$ of all permutation solutions on a set $X$ with cardinality $|X|= n$ and give an explicit presentation
 of $\cA(\k , X, r_f)$ in terms of generators and explicit $n(n-1)$ quadratic relations which form a reduced Gr\"{o}bner basis, so
 that these algebras are explicitly PBW. We use relations which are different from (and equivalent to) the defining relations
 coming from the original definition of the YB algebra $\cA(\k , X, r_f)$, but with the benefit that the set of new relations forms
 an explicit reduced Gr\"{o}bner basis. We thereby show, remarkably, that all Yang-Baxter algebra $\cA(\k , X, r_f)$ for $r_f$ in
 the class $P_n$  are isomorphic, see Theorem~\ref{thm:YBalgPermSol} and Corollary \ref{cor:main}. Here, the number of \emph{non
 isomorphic} permutation
 solutions $(X, r_f)$ is the number of conjugacy classes in $\Sym(X)$ and hence $p(n)$, the partition function on $n$. This may be
 a large number, all with isomorphic Yang-Baxter algebras. Since we can take $f=\id$, we have moreover a natural representative
 $\cA(\k,n)$ in the isomorphism class for each fixed $n$, which we particularly study.

Further results relate to Veronese subalgebras and Segre products, building on methods for finding these in the context of
Yang-Baxter algebras in \cite{AGG,GI23,GI_Veronese}. The Veronese subalgebra $A^{(d)}$ of a quadratic algebra $A$ is defined as the
subalgebra of elements with degrees that are divisible by $d$. These and related Segre products were previously studied in a
noncommutative setting for general Koszul algebras  by Backelin and Froeberg in \cite{Backelin,Froberg}. A new result here is that
if $(X,r_f)$ is in the class $P_n$ then, for  each integer $d\geq 2$, we construct a new `$d$-Veronese solution' $(W, r_F)$ also in
the class $P_n$, where $W$ is again a set of $n$ elements and $F:W\to W$ is a bijection.  In general, the two solutions $(X,r_f)$
and $(W, r_F)$ are not isomorphic but the latter is constructed so that the $d$-Veronese subalgebra of  $\cA(\k,X,r_f)$ is
$\cA(\k,W,r_F)$, see Theorem~\ref{thm:veronesealg}. Thus,  the class of permutation solutions $P_n$ is closed under taking
$d$-Veronese solutions, in contrast with results on Veronese subalgebras  in \cite{AGG,GI_Veronese}.

For a Segre product of two quadratic algebras $A\circ B$, one needs a quadratic algebra $C$ of a type similar to the type of $A$
and $B$ and an algebra homomorphism $s: C \longrightarrow A \otimes B$, such that the image of $s$ is the putative Segre product
$A\circ B$. One then has to find generators of the kernel to complete the construction. Our result  here, see
Theorem~\ref{thm:rel_segre}, is again that the class of permutation idempotent solutions is closed under Segre products; given two
permutation idempotent solutions in $P_n,P_m$ respectively, we construct a third one in $P_{mn}$, the Yang-Baxter algebra of which
is the Segre product of those of the original two solutions. As well as Veronese subalgebras and Segre products, in
\cite{AGG,GI23} and in the present paper we also consider non-commutative analogues of the Veronese and Segre morphisms, two
fundamental maps that play pivotal roles in classical algebraic geometry \cite{Harris} and in applications to other fields of
mathematics.

A final Section~\ref{secncg} provides some first results on the noncommutative differential geometry of $\cA(\k,n)$ viewed as a
noncommutative version of $\k[x_1,\cdots,x_n]$. We formulate a general construction of first order differential structures
$(\Omega^1,\extd)$ for quadratic algebra and solve for the required data in the case of $\cA(\k,2)$, obtaining a natural
4-parameter family. The same construction works in principle for $\cA(\k,n)$ but with increasingly more solutions. We also provide
a different class of calculi on $\cA(\k,n)$ coming from its bialgebra structure as a monoid algebra. We then round off the paper
with some straightforward computations for  $A(R_f)$, the FRT bialgebra\cite{FRT} for the linear extension of a permutation
idempotent solution $r_f$, under which $\cA(\k,X,r_f)$ is a comodule algebra. We also determine another `fermionic'  Yang-Baxter
comodule algebra for the case where an $R$-matrix $R$ defines an idempotent braiding (just as the standard quantum plane
$\C_q^{2|0}$ has a fermionic partner $\C_q^{0|2}$, see\cite{FRT,MajidQG,Ma88}). By constructions in \cite{MajidQG}, the fermionic
version is necessarily a Hopf algebra in a certain prebraided category (by which we mean that the braiding need not be invertible)
defined by $-R$. Whereas quantum geometry associated to involutive and $q$-Hecke solutions of the Yang-Baxter equations is well
studied, the idempotent case has a very different character as indicated here in the permutation idempotent case.

As well as the key results outlined above,
Section~\ref{sec:graphs} uses PBW and graphical methods to arrive at a general result,  Theorem~\ref{thm:gldiminf}, showing that an
arbitrary $n$-generated PBW algebra $A$ with Gelfand-Kirillov dimension $< n$  has infinite global dimension. Another result,
Theorem~\ref{thm:new}, provides (an exact) lower and an upper bound for the dimension of the grade 2 component of  $\cA(\k, X, r)$
in the case where this is PBW and $(X, r)$ is a left nondegenerate idempotent solution. Equivalently, the theorem provides a lower
and an (exact) upper bound for the  number of relations in the reduced Gr\"{o}bner basis for this Yang-Baxter algebra. The section
ends with Question~\ref{que:conjecture}, an answer to which would characterize all idempotent left nondegenerate solutions of order
$n$ for which the Yang-Baxter algebra has exactly $n(n-1)$ linearly independent quadratic relations forming  a Gr\"{o}bner basis.
Section~\ref{seczero} provides  results on zero divisors  of $\cA(\k,X,r_f)$ and the left annihilator of $\cA(\k,X,r_f)^+$, the
two-sided maximal ideal $(x_1, \cdots, x_n)$ generated by $x_1, \cdots, x_n$. Section~\ref{secmon} contains results on the monoid
$S(X,r_f)$  in the permutation idempotent case. Section~\ref{seq:preliminaries} provides basic algebraic preliminaries for the
paper.

\section{Preliminaries}
\label{seq:preliminaries}
Let $X$ be a non-empty set , and let $\k $ be a field.
We denote by $\asX$
the free monoid
generated by $X,$ where the unit is the empty word denoted by $1$, and by
$\k \asX$-the unital free associative $\k$-algebra
generated by $X$. For a non-empty set $F
\subseteq \k \asX$, $(F)$ denotes the two sided ideal
of $\k \asX$ generated by $F$.
When the set $X$ is finite, with $|X|=n \ge 2$, and ordered, we write $X= \{x_1,
\dots, x_n\},$ and fix the degree-lexicographic order $<$ on $\asX$, where $x_1< \dots
<x_n$.
As usual, $\N$ denotes the set of all positive integers,
and $\N_0$ is the set of all non-negative integers.

We shall consider associative graded $\k$-algebras.
Suppose $A= \bigoplus_{m\in\N_0}  A_m$ is a graded $\k$-algebra
such that $A_0 =\k $, $A_pA_q \subseteq A_{p+q}, p, q \in \N_0$, and such
that $A$ is finitely generated by elements of positive degree. Recall that its
Hilbert function is $h_A(m)=\dim A_m$
and its Hilbert series is the formal series $H_A(t) =\sum_{m\in\N_0}h_{A}(m)t^m$.
For $m \geq 1$,  $X^m$ will denote the set of all words of length $m$ in $\asX$,
where the length of $u = x_{i_1}\cdots x_{i_m} \in X^m$
will be denoted by $|u|= m$.
Then
\[\asX = \bigsqcup_{m\in\N_0}  X^{m},\quad
X^0 = \{1\},\quad   X^{k}X^{m} \subseteq X^{k+m},\]
so the free monoid $\asX$ is naturally \emph{graded by length}. Similarly, the free associative algebra $\k \asX$ is also graded by
length:
\[\k \asX
 = \bigoplus_{m\in\N_0} \k \asX_m,\quad
 \k \asX_m=\k  X^{m}. \]
A polynomial $f\in  \k \asX$ is \emph{homogeneous of degree $m$} if $f \in
\k  X^{m}$.

\subsection{Gr\"obner bases for ideals in the free associative algebra}
\label{sec:grobner}
We remind briefly some basics of the theory of noncommutative Gr\"obner bases, which we use throughout in the paper. In this
subsection $X=
\{x_1,\dotsc,x_n\}$, we fix the degree lexigographic order $<$ on the free monoid $\asX$ extending $x_1 < x_2< \cdots <x_n$ (we
refer to this as {\em deg-lex ordering}).
Suppose $f \in \textbf{ k}\asX$ is a nonzero polynomial. Its leading
monomial with respect to the deg-lex order $<$ will be denoted by
$\LM(f)$.
One has $\LM(f)= u$ if
$f = cu + \sum_{1 \leq i\leq m} c_i u_i$, where
$ c, c_i \in \k $, $c \neq 0 $ and $u > u_i$ in $\asX$, for all
$i\in\{1,\dots,m\}$. Given a set $F \subseteq \k  \asX$ of
non-commutative polynomials, we consider the set of leading monomials
 $\LM(F) = \{\LM(f) \mid f \in F\}.
 $
A monomial $u\in \asX$ is \emph{normal modulo $F$} if it does not contain any of
the monomials $\LM(f)$ as a subword.
 The set of all normal monomials modulo $F$ is denoted by $N(F)$.

Let  $I$ be a two sided graded ideal in $\k\asX$ and let $I_m = I\cap
\k X^m$.
We shall
assume that
$I$ \emph{is generated by homogeneous polynomials of degree $\geq 2$}
and $I = \bigoplus_{m\ge 2}I_m$. Then the quotient
algebra $A = \k  \asX/ I$ is finitely generated and inherits its
grading $A=\bigoplus_{m\in\N_0}A_m$ from $ \k  \asX$. We shall work with
the so-called \emph{normal} $\k$-\emph{basis of} $A$.
We say that a monomial $u \in \asX$ is  \emph{normal modulo $I$} if it is normal
modulo $\LM(I)$. We set
$N(I):=N(\LM(I))$.
In particular, the free
monoid $\asX$ splits as a disjoint union
\begin{equation}
\label{eq:X1eq2a}
\asX=  N(I)\sqcup \LM(I).
\end{equation}
The free associative algebra $\k  \asX$ splits as a direct sum of
$\k$-vector
  subspaces
  \[\k  \asX \simeq  \Span_{\k } N(I)\oplus I,\]
and there is an isomorphism of vector spaces
$A \simeq \Span_{\k } N(I).$

It follows that every $f \in \k \asX$ can be written uniquely as $f =
h+f_0,$ where $h \in I$ and $f_0\in {\k } N(I)$.
The element $f_0$ is called \emph{the normal form of $f$ (modulo $I$)} and denoted
by
$\Nor(f)$.
We define
\[N(I)_{m}=\{u\in N(I)\mid u\mbox{ has length } m\}.\]
In particular, $N(I)_1 =X, N(I)_0=1$. Then
$A_m \simeq \Span_{\k } N(I)_{m}$ for every $m\in\N_0$.

A subset
$G \subseteq I$
of monic polynomials is a \emph{Gr\"{o}bner
basis} of $I$ (with respect to the order $<$) if
\begin{enumerate}
\item $G$ generates $I$ as a
two-sided ideal, and
\item for every $f \in I$ there exists $g \in G$ such that $\LM(g)$ is a
    subword of $\LM(f)$, that is
$\LM(f) = a\LM(g)b$,  for some $a, b \in \asX$.
\end{enumerate}
A  Gr\"{o}bner basis $G$ of
$I$ is \emph{reduced} if (i)  the set $G\setminus\{f\}$ is not a Gr\"{o}bner
basis of $I$, whenever $f \in G$; (ii) each  $f \in G$  is a linear combination
of normal monomials modulo $G\setminus\{f\}$.

It is well-known that every ideal $I$ of $\k  \asX$ has a unique reduced
Gr\"{o}bner basis $G_0= G_0(I)$ with respect to $<$, but, in general, $G_0$ may be
infinite.
 For more details, we refer the reader to \cite{Latyshev, Mo88, Mo94}. The set of leading monomials  of the reduced Gr\"{o}bner
 basis $G_0= G_0(I)$,
\begin{equation}
\label{eq:obstructions}
\textbf{W} =\{LM(f) \mid  f \in G_0(I)\}
\end{equation}
is \emph{the set of obstructions} for $A= \k  \asX/ I$, in the sense of Anick, \cite{Anickmon}.
There are equalities of sets  $N(I) = N(G_0) = N(\textbf{W}).$
\begin{rmk}
\label{rmk:diamondlemma} Bergman's Diamond lemma  \cite[Theorem 1.2]{Bergman} implies the following.
Let $G  \subset \k \asX$  be a set  of noncommutative polynomials. Let $I =
(G)$ and let $A = \k \asX/I.$ Then the following
conditions are equivalent.
\begin{enumerate}
\item
The set
$G$  is a Gr\"{o}bner basis of $I$.

\item Every element $f\in \k \asX$ has a unique normal form modulo $G$,
    denoted by $\Nor_G (f)$.
\item
There is an equality $\cN=N(G) = N(I)$, so there is an isomorphism of vector spaces
\[\k \asX \simeq I \oplus \k \cN.\]
\item The image of $\cN$ in $A$ is a $\k$-basis of $A$, we call it \emph{the normal $\k$-basis of $A$}.
In this case, one can define multiplication $\bullet$ on the $\k$-vector space $\k \cN$ as \[a\bullet b: = \Nor(ab),\quad
\forall a,b \in \k \cN, \]
which gives the structure of a $\k$-algebra on $\k N(G)$  isomorphic to
$A$. We shall often identify $A$ with the $\k$-algebra $(\k N(G), \bullet)$
\end{enumerate}
\end{rmk}

\subsection{Quadratic algebras}
\label{sec:Quadraticalgebras}
A quadratic  algebra is an associative graded algebra
 $A=\bigoplus_{i\ge 0}A_i$ over a ground field
 $\k $  determined by a vector space of generators $V = A_1$ and a
subspace of homogeneous quadratic relations $R= R(A) \subset V
\otimes V.$ We assume that $A$ is finitely generated, so $\dim A_1 <
\infty$. Thus, $ A=T(V)/( R)$ inherits its grading from the tensor
algebra $T(V)$.

In this paper, we consider finitely presented quadratic algebras   $A= \k  \langle X\rangle /(\Re)$,
where by convention $X$ is a fixed finite set of generators of
degree $1$, $|X|=n \geq 2,$ and $(\Re)$ is the two-sided
ideal of relations, generated by a {\em finite} set $\Re$ of
homogeneous polynomials of degree two. In particular, $A_1 = V= \Span_{\k  } X$.
 \begin{dfn}
\label{def:PBW}
A quadratic algebra $A$ is
\emph{a  Poincar\'{e}-Birkhoff-Witt type algebra} or shortly
\emph{a PBW algebra} if there exists an enumeration $X=\{x_1,
\cdots, x_n\}$ of $X,$ such that the quadratic relations $\Re$ form a
(noncommutative) Gr\"{o}bner basis with respect to the
deg-lex order $<$ on $\asX$.
In this case, the set of normal monomials
(mod $\Re$) forms a $\k$-basis of $A$ called a \emph{PBW
basis}
 and $x_1,\cdots, x_n$ (taken exactly with this enumeration) are called \emph{
 PBW-generators of $A$}.
\end{dfn}
 PBW algebras were introduced by Priddy, \cite{priddy} and form an important class of Koszul algebras.
 A PBW basis  is a generalization of the classical
 Poincar\'{e}-Birkhoff-Witt basis for the universal enveloping of a  finite
 dimensional Lie algebra. The interested reader can find information on quadratic algebras and, in
  particular, on Koszul algebras and PBW algebras in \cite{PoPo}.

\subsection{Set-theoretic solutions of the Yang-Baxter equation and their Yang-Baxter algebras}
The notion of \emph{a quadratic set} was introduced in \cite{GI04}, see also \cite{GIM08},  as a
set-theoretic analogue of a quadratic algebra. Here we generalize it by not assuming that the map $r$ is bijective.
\begin{dfn}\cite{GI04}
\label{def:quadraticsets_All} Let $X$ be a nonempty set (possibly
infinite) and let $r: X\times X \longrightarrow X\times X$ be a
 map. In this case,  we refer to  $(X, r)$ as a \emph{quadratic set}. The image of $(x,y)$ under $r$ is
written as
\[
r(x,y)=({}^xy,x^{y}).
\]
This formula defines a ``left action'' $\Lcal: X\times X
\longrightarrow X,$ and a ``right action'' $\Rcal: X\times X
\longrightarrow X,$ on $X$ as: $\Lcal_x(y)={}^xy$, $\Rcal_y(x)=
x^{y}$, for all $x, y \in X$.

(i) $(X, r)$ is \emph{left non-degenerate}, (respecively, \emph{right nondegenerate}) if
the map $\Lcal_x$ (respectively, $\Rcal_x$) is bijective for each $x\in X$.
$(X,r)$ is nondegenerate if both maps $\Lcal_x$ and $\Rcal_x$ are bijective.
(ii) $(X, r)$ is \emph{involutive} if $r^2 = \id_{X\times X}$.
(iii) $(X, r)$ is \emph{idempotent} if $r^2 = r$.
(iv) $(X, r)$ is \emph{a set-theoretic
solution of the Yang--Baxter equation} (YBE) if  the braid
relation
\[r^{12}r^{23}r^{12} = r^{23}r^{12}r^{23}\]
holds in $X\times X\times X,$  where  $r^{12} = r\times\id_X$, and
$r^{23}=\id_X\times r$. In this case, we also refer to  $(X,r)$ as
\emph{a braided set}.
\end{dfn}

\begin{rmk}
    \label{rmk:YBE1}
Let $(X,r)$ be quadratic set.
Then $r$ obeys the YBE,
that is $(X,r)$ is a braided set {\em iff} the following three conditions
hold for all $x,y,z \in X$:
\[
\begin{array}{cccc}
 {\bf l1:}&{}^x{({}^yz)}={}^{{}^xy}{({}^{x^y}{z})},
 \quad\quad
 {\bf r1:}&
{(x^y)}^z=(x^{{}^yz})^{y^z},
 \quad \quad{\rm\bf lr3:}&
{({}^xy)}^{({}^{x^y}{z})} \ = \ {}^{(x^{{}^yz})}{(y^z)}.
\end{array}\]
The map $r$ is idempotent,   $r^2=r$,
\emph{iff}
\[
\textbf{pr:} \quad {}^{{}^xy}{(x^y)}={}^xy,\quad ({}^xy)^{x^y}= x^y, \quad \forall x, y \in X.
\]
\end{rmk}

\begin{convention}
\label{conv:convention1} As a notational tool, we  shall  identify the sets $X^{\times
m}$ of ordered $m$-tuples, $m \geq 2,$  and $X^m,$ the set of all
monomials of length $m$ in the free monoid $\asX$.
Sometimes for simplicity we
 shall write $r(xy)$ instead of $r(x,y)$. \end{convention}

\begin{dfn} \cite{GI04, GIM08}
\label{def:algobjects} To each finite quadratic set $(X,r)$ we associate
canonically algebraic objects generated by $X$ with quadratic
relations $\Re =\Re(r)$ naturally determined as
\[
xy=y^{\prime} x^{\prime}\in \Re(r)\quad \text{iff}\quad
 r(x,y) = (y^{\prime}, x^{\prime})\ \&\
 (x,y) \neq (y^{\prime}, x^{\prime})
\]
 as equalities in $X\times X$. The monoid
$S =S(X, r) = \langle X ; \; \Re(r) \rangle$
 with a set of generators $X$ and a set of defining relations $ \Re(r)$ is
called \emph{the monoid associated with $(X, r)$}. For an arbitrary fixed field $\k$,
\emph{the} $\k$-\emph{algebra associated with} $(X ,r)$ is
defined as
\[\begin{array}{c}
 \cA(\k ,X,r) = \k \langle X  \rangle /(\Re_{\cA})
\simeq \k \langle X ; \;\Re(r)
\rangle;\quad
\Re_{\cA} = \{xy-y^{\prime}x^{\prime}\mid \  xy=y^{\prime}x^{\prime}\in \Re
(r)\}.
\end{array}
\]
Usually, we shall fix an enumeration $X= \{x_1, \cdots, x_n\}$ and extend it to the degree-lexicographic order $<$ on $\asX$.
In this case we require the relations of $\cA$ to be written as
\[
\Re_{\cA} = \{xy-y^{\prime}x^{\prime}\mid \  xy>y^{\prime}x^{\prime}\ \&\  r(xy)= y^{\prime}x^{\prime}\  \text{or}\;
r(y^{\prime}x^{\prime})= xy\}. \]
Clearly, $\cA(\k,X,r)$ is a quadratic algebra generated by $X$
 with defining relations  $\Re_{\cA}$, and is
isomorphic to the monoid algebra $\k S(X, r)$.  When $(X,r)$ is a solution of YBE, we defer to
$\cA(\k,X,r)$ is the associated \emph{Yang-Baxter algebra} (as in \cite{Ma88} for the linear case) or \emph{YB algebra} for short,
and to $S(X, r)$ as the associated Yang-Baxter monoid.
\end{dfn}

If $(X,r)$ is a finite quadratic set then $\cA(\k ,X,r)$ is \emph{a connected
graded} $\k$-algebra (naturally graded by length),
 $\cA=\bigoplus_{i\ge0}\cA_i$, where
$\cA_0=\k ,$
and each graded component $\cA_i$ is finite dimensional.
Moreover, the associated monoid $S(X,r)$ \emph{is naturally graded by
length}:
\begin{equation}
\label{eq:Sgraded}
S = \bigsqcup_{i \geq 0}  S_{i};\quad
S_0 = 1,\; S_1 = X,\; S_i = \{u \in S \mid\;   |u|= i \}, \; S_i.S_j \subseteq
S_{i+j}.
\end{equation}
In the sequel, by `\emph{a graded monoid} $S$', we shall
mean that $S$ is a monoid generated  by $S_1=X$ and graded by length.
The grading of $S$ induces a canonical grading of its monoid algebra
$\k S(X, r).$
The isomorphism $\cA\cong \k S(X, r)$ agrees with the canonical gradings,
so there is an isomorphism of vector spaces $\cA_m \cong \Span_{\k }S_m$.

\begin{rmk}
\label{orbitsinG}
 \cite{GI04s}
 Let $(X,r)$ be a quadratic set, $X= \{x_1, \cdots, x_n\}$ and let $S=S(X,r)$ be the
 associated monoid.

(i) It follows from the defining relations (and the transitive law) that two elements $xy, zt \in X^2$ are equal in $S$ \emph{iff}
\[
r^p(xy)=r^q(zt) \;
\text{is an equality in}\; X^2 \; \text{for some integers}\; p, q \geq 0,\]
where $r^0 = \id$.

 (ii) By definition, two monomials $w_1, w_2 \in \langle X\rangle$ are equal
 in $S$
  \emph{iff} they have equal lengths  $\geq 2$ and there exists a monomial $w_0$ such that each  $w_i, i=1,2$ can be transformed to
  $w_0$ by a finite sequence of
  replacements (they are also called \emph{reductions} in the literature ) each of the form
  \[a(xy)b \longrightarrow a(zt)b,\]
where $xy=zt$ is an equality in $S$, $xy>zt$  in $X^2$ and $a,b \in \langle X\rangle$.

 Clearly, every such replacement preserves monomial length, which therefore
 descends to $S(X,r)$.
Furthermore, replacements coming from the defining relations are possible only
on monomials of length $\geq 2$, hence $X \subset S(X,r)$ is an inclusion.

(ii) It is  convenient for each $m \geq 2$ to refer to the subgroup $D_m=
D_m(r)$ of the braid group $B_m$
generated concretely by the maps
\begin{equation}
\label{eq:r{ii+1}}
r^{ii+1}: X^m \longrightarrow X^m, \  r^{ii+1} =\id_{X^{i-1}}\times r\times
\id_{X^{m-i-1}}, \; i = 1, \cdots, m-1.
\end{equation}
One can also consider the free groups
\[\Dcal_m (r)=  \: _{\rm{gr}} \langle r^{i i+1}\mid i = 1, \cdots, m-1
\rangle,\]
where the
$r^{ii+1}$ are treated as abstract symbols, as well as various quotients
depending on the further type of $r$ of interest.
These free groups and their quotients act on $X^m$ via the actual maps
$r^{ii+1}$, so that
 the image of $\Dcal_m(r)$ in $B_m$ is $D_m(r).$ In particular, $D_2(r) =
 \langle r \rangle$
 is the cyclic group generated by $r$.
 It follows straightforwardly from part (ii) that $w_1,  w_2 \in \langle
 X\rangle$ are equal as elements of $S(X,r)$
  \emph{iff} they have the same length, say $m$, and belong to the same orbit
  $\Ocal_{\Dcal_m}$ of $\Dcal_m(r)$ in $X^m$. In this case, the equality
  $w_1=w_2$ holds in $S(X,r)$ and in the algebra $\cA(\k , X, r)$.

An effective part of our combinatorial approach is the exploration of the
action of the group
$\Dcal_2(r) = \langle r\rangle$ on $X^2$,
and the properties of the corresponding orbits.  In the
literature a $\Dcal_2(r)$-orbit $\Ocal$ in $X^2$ is often called
`\emph{an $r$-orbit}' and we shall use this terminology.
\end{rmk}

 In notation and assumption as above, let $(X,r)$ be a finite quadratic set with
 $S=S(X,r)$ graded by length.
 Then the order of the graded component $S_m$ equals the number of $\Dcal_m(r)$-orbits in $X^m$.

\begin{convention}
\label{rmk:conventionpreliminary1}  Let $(X,r)$ be a finite solution of YBE
of order $n\geq 2$,  and let $\cA = \cA(\k ,X,r)$ be the associated
Yang-Baxter algebra. We fix an arbitrary enumeration
$X=\{x_1, \cdots, x_n\}$ on $X$,
and extend it to the
 deg-lex order $<$ on
$\langle X \rangle$.
By convention, the Yang-Baxter algebra
is presented as
\begin{equation}
\label{eq:Algebra}
\begin{array}{c}
\cA = \k \langle X  \rangle /(\Re_{\cA})
\simeq \k \langle X ; \;\Re(r)
\rangle,\\
\Re_{\cA}
 = \{xy-y^{\prime}x^{\prime}\mid\  xy> y^{\prime}x^{\prime}  \quad \&\quad r(xy)=y^{\prime}x^{\prime} \;\text{or}
 \;r(y^{\prime}x^{\prime}) = xy
 \}.
\end{array}
\end{equation}
Consider the two-sided ideal  $I=(\Re_{\cA})$ of $\k \langle X  \rangle$,
let $G= G(I)$ be the  unique reduced
Gr\"{o}bner basis of $I$  with respect to $<$. Here, we will not need an explicit description of the reduced Gr\"{o}bner basis
$G$  of $I$, but we do need some details.

In general, the set of relations  $\Re_{\cA}$ may not form a Gr\"{o}bner basis of $I$.
However, the shape of the relations  $\Re_{\cA}$ and standard techniques from noncommutative Gr\"{o}bner bases theory imply
that the reduced Gr\"{o}bner basis $G$ is finite, or countably
 infinite, and consists of homogeneous binomials $f_j= u_j-v_j,$ where
 $\LM(f_j)= u_j > v_j$, and $u_j, v_j \in X^m$, for some $m\geq 2$.
The set of all normal monomials modulo $I$ is denoted by $\cN$. As mentioned
above, $\cN= \cN(I) = \cN(G)$.
An element $f \in \k  \asoX $ is in normal form (modulo $I$), if $f \in
\Span_{\k } \cN $.
The free
monoid $\asoX$ splits as a disjoint union
$\asoX=  \cN\sqcup \LM(I).$
The free associative algebra $\k  \asoX$ splits as a direct sum of
$\k$-vector
  subspaces
  $\k  \asoX \simeq  \Span_{\k } \cN \oplus I$,
and there is an isomorphism of vector spaces
$\cA \simeq  \Span_{\k } \cN.$
As usual, we denote
\begin{equation}
\label{eq:Nm}
\cN_{d}=\{u\in \cN \mid u\mbox{ has length } d\}.
\end{equation}
 Then
$\cA_d \simeq \Span_{\k } \cN_{d}$ for every $d\in\N_0$. Note that since the set of relations $\Re_{\cA}$ is a finite set of
homogeneous polynomials,
the elements
of  the reduced Gr\"{o}bner basis $G = G(I)$  of degree $\leq d$ can be found
effectively, (using the standard strategy for constructing a Gr\"{o}bner
basis) and therefore the set of normal monomials $\cN_{d}$  can be found
inductively for $d=1, 2, 3, \cdots .$
It follows from Bergman's Diamond lemma, \cite[Theorem 1.2]{Bergman}, that
if we consider the space $\k  \cN$
endowed with multiplication defined by
 \[f \bullet g := \Nor (fg), \quad \forall\ f,g \in \k  \cN\]
then
$(\k  \cN, \bullet )$
has a well-defined structure of a graded algebra, and there is an isomorphism of
graded algebras
\[
\cA\cong (\k  \cN, \bullet );\quad \cA =\bigoplus_{d\in\N_0}  \cA_d \cong \bigoplus_{d\in\N_0}  \k \cN_d.
\]
By convention, we shall often identify the algebra  $\cA$ with
$(\k \cN, \bullet )$.
Similarly, we consider an operation $\bullet$ on the set $\cN$, with $a
\bullet b:= \Nor(ab)$ for  $a,b \in \cN$,
and identify the monoid $S=S(X,r)$ with $(\cN, \bullet)$, see \cite[Section 6]{Bergman}. \end{convention}

\section{A class of left nondegenerate idempotent solutions and their Yang-Baxter algebras}

Finite idempotent set-theoretic solutions of the Yang--Baxter equation were studied in
\cite{Colazzo22}, where several interesting results were obtained.
We concentrate on a class of concrete such solutions $(X, r_f)$ which we call `permutation idempotent solutions', where $f\in \Sym
(X)$. Such solutions appeared in \cite[Prop.~3.15]{Colazzo22}. In this section we provide new results on the associated Yang-Baxter
algebra $\cA(\k,X,r_f)$. From Definition~\ref{defperm} until the end of the paper, $X$ will be assumed to be of finite order $n
\geq 2$.

\subsection{Left nondegenerate idempotent solutions}

The following proposition shows that for a quadratic set $(X,r)$ of arbitrary cardinality which is left nondegenerate and
satisfies
$r(x,y) = ({}^xy, y)$, for all $x,y \in X$, condition \textbf{l1} in  Remark \ref{rmk:YBE1} is by itself  sufficient to ensure that
$(X,r)$ is a solution
of YBE.

\begin{pro}
\label{pro:important}
Let $X$ be a nonempty set of arbitrary cardinality, and let
$r: X\times X\longrightarrow X\times X$ be a map with the following properties:
\begin{enumerate}
\item $(X,r)$ is left non-degenerate;
\item
\begin{equation}
\label{eq:r(x,y)}
r(x,y) = ({}^xy, y), \;  \; \forall \; x,y \in X.
\end{equation}
\end{enumerate}

Then the following three conditions are equivalent
\begin{enumerate}
\item
\label{solution}
$(X,r)$ is a solution of YBE;
\item
\label{l1}
$(X,r)$ satisfies condition \textbf{l1} in  Remark \ref{rmk:YBE1};
\item
\label{leftequivalent}
There exists a bijection $f \in \Sym(X)$, such that
\[r(x,y) = (f(y), y),\quad\forall\ x,y \in X.\]
\end{enumerate}
In this case $(X, r)$ is an idempotent solution, that is $r^2=r$.
\end{pro}
\begin{proof}
(\ref{l1}) $\Longrightarrow$ (\ref{leftequivalent}).
It follows from \textbf{l1} and  (\ref{eq:r(x,y)}) that
\begin{equation}
\label{eq:l1a}
{}^{(xy)}a  = {}^{x}{({}^ya)}= {}^{{}^xy}{({}^ya)},\quad\forall\  a, x,y \in X.
\end{equation}
Let $t\in X$ be an arbitrary element. By the left nondegeneracy there exists an $a\in X$ such that ${}^ya= t$.
Therefore
\begin{equation}
\label{eq:l1b}
{}^{x}{t}= {}^{{}^xy}{t},\quad\forall\  x,y, t \in X.
\end{equation}
Let $z\in X$ be an arbitrary element. By the left nondegeneracy again, there exists an $y\in X$, such that ${}^xy=z$.
This together with (\ref{eq:l1b})  implies
\begin{equation}
\label{eq:l1c}
{}^{x}{t}= {}^zt, \quad\forall\  x,z, t \in X.
\end{equation}
Therefore $\LL_x = \LL_z,$ for all $x,z \in X$. In particular, there exists a bijection $f \in \Sym(X)$, such that
 $\Lcal_x= f,$ for all $x\in X$. This proves part (\ref{leftequivalent}).

   (\ref{leftequivalent}) $\Longrightarrow$ (\ref{solution})
   Assume (\ref{leftequivalent}). We shall prove that $(X,r)$ is a solution. Let $xyz \in X^3$ be an arbitrary monomial. The
   `Yang-Baxter diagrams',
\begin{equation}
\label{ybediagram}
\begin{array}{l}
 xyz \longrightarrow^{r^{12}} \; f(y)y z\longrightarrow^{r^{23}} \; f(y)f(z) z \longrightarrow^{r^{12}} \;f^2(z)f(z) z\\
 \\
 xyz \longrightarrow^{r^{23}}\; xf(z) z\longrightarrow^{r^{12}}\; f^2(z)f(z) z \longrightarrow^{r^{23}} \;f^2(z)f(z) z\\
 \end{array}
 \end{equation}
show that \[r^{12}r^{23}r^{12} (xyz) =r^{23}r^{12}r^{23}(xyz),\]
for every monomial $xyz\in X^3$, and therefore $(X,r)$ is a solution of YBE.

The implication (\ref{solution}) $\Longrightarrow$  (\ref{l1}) follows straightforwardly from Remark \ref{rmk:YBE1}.
We have proven the equivalence of conditions (1), (2), and (3).

Finally, condition (\ref{leftequivalent}) implies that
the following equalities hold for all $x, y \in X:$
\[r^2(x, y)= r(r(x,y)) = r(f(y), y) = (f(y), y) = r(x,y), \]
which proves that $(X,r)$ is an idempotent solution.

\end{proof}


\begin{dfn}\label{defperm}
Let $X$ be a nonempty set and $f \in \Sym(X)$.
We refer to the left nondegenerate solution $(X,r_f)$ where
\[ r_f: X\times X \longrightarrow X\times X,\quad r_f(x,y) = (f(y), y)\]
as a  \emph{permutation idempotent solution}. We denote by $P_n$ the class of all permutation idempotent solution on sets $X$ of
order $n$ up to isomorphism.
\end{dfn}

\begin{rmk}
\label{rmk:previous_results}
Colazzo at al, \cite{Colazzo22}, studied finite nondegenerate idempotent solutions of YBE and introduced an example in which, by assumption, $(X,r)$ is a finite
solution  of the form
$r(x, y) = (\lambda (x), y)$,  where $\lambda : X \longrightarrow X$ is a permutation. Our starting point in Proposition \ref{pro:important} is significantly  weaker in that we  do not assume that $(X,r)$ is a solution, nor that $X$ is finite. Rather, it includes the result that starting with a quadratic set $(X,r)$ of a certain form and obeying \textbf{l1} leads to a solution with $\Lcal_x$ independent of $x\in X$ and hence given by a permutation.

Next, it was proven in \cite[Prop.~3.15]{Colazzo22} that, in our notation, the Yang-Baxter algebra $\cA(\k,X,r_f)$ for a finite
permutation idempotent solution is a PBW algebra. However, an explicit reduced Gr\"{o}bner basis, and a PBW $\k$-basis (the set of
normal
words modulo the Gr\"{o}bner basis) of the algebra were not found. The Hilbert functions (in particular $\dim \cA_2$) and the
Hilbert series of $\cA$
were also not determined.

Our result, Theorem~\ref{thm:YBalgPermSol} below, takes a different approach and provides  \emph{an
explicit standard
finite presentation} (\ref{eq:rels1}) of $\cA(\k,X,r_f)$, for an arbitrary fixed enumeration $X = \{x_1, \cdots, x_n\}$. This
presentation depends on the enumeration of $X$ but does not depend on the particular permutation $f$, and therefore  all
permutation solutions $(X,r_f)$
share \emph{the same} Yang-Baxter algebra $\cA(\k,X,r_f)$ with PBW-generators $\{x_1, \cdots, x_n\}$, the same
 PBW $\k$-basis  denoted $\cN$, see Corollary \ref{rmk:normalforms}, and the same explicitly given Hilbert series.
More generally, Corollary \ref{cor:main}  implies that all permutation solutions of order $n$ have isomorphic Yang-Baxter algebras.
Our results Theorem~\ref{thm:YBalgPermSol}, Corollary \ref{cor:main}, and Corollary \ref{rmk:normalforms} are crucial for this work
and are used extensively in the paper.
\end{rmk}

\begin{thm}
\label{thm:YBalgPermSol}
  Suppose $(X, r_f)$ is a permutation idempotent solution, where $X= \{x_1, x_2, \cdots, x_n\}$, and $f \in \Sym (X)$.
   By definition, the associated Yang-Baxter algebra  $\cA=\cA(\k,X,r_f)$ has the  presentation
  \begin{equation}
\label{eq:rels0}
\begin{array}{c}
\cA= \k \asX /(\Re_0);\quad \Re_0 = \{x_jx_p- f(x_p)x_p\mid 1 \leq j,p \leq n\},
\end{array}
\end{equation}
where the set $\Re_0$ consists of the binomial relations as shown.  Then
\begin{enumerate}
  \item
  The Yang-Baxter algebra is a PBW algebra with a standard finite presentation
\begin{equation}
\label{eq:rels1}
\begin{array}{c}
\cA= \k \asX /(\Re);\quad \Re = \{x_jx_p- x_1x_p\mid 2 \leq j \leq n, 1 \leq p\leq n\},
\end{array}
\end{equation}
where the set $\Re$ consists of $n(n-1)$ quadratic binomial relations as shown and is the reduced Gr\"{o}bner basis of the two
sided ideal $I= (\Re) = (\Re_0)$ in $\k \asX$.
\item
The set of normal monomials
\begin{equation}
\label{eq:normalbasis}
\cN =\cN(\Re) =\cN(I) = \{1\}\cup\{x_1^{\alpha}x_m\mid \alpha \in \mathbb{N}_0,\ m \in \{1, 2, \cdots, n\}\}
\end{equation}
is a PBW $\k$-basis of $\cA$.
In particular, $\cA$ is a graded algebra of Gelfand-Kirillov dimension $1$  isomorphic to $(\k  \cN, \bullet )$ with
\[
 \cA=\bigoplus_{d\in\N_0}  \cA_d \cong \bigoplus_{d\in\N_0}  \k \cN_d,
\]
where for each $d\geq 1$, the graded component $\cA_d$ has a $\k$-basis
\begin{equation}
\label{eq:N_d}
\cN_d= \{w_1 = x_1^d < w_2 = x_1^{d-1} x_2 < \cdots <w_n = x_1^{d-1} x_n \},
\end{equation}
the set of normal monomials of length $d$.
\item
The Hilbert function $h_\cA$ and  the Hilbert series $H_\cA$ of $\cA$ are
\begin{equation}
\label{eq:hilbert}
h_{\cA}(d) =\dim \cA_d= n,\quad  \forall d \geq 1,\; \quad H_\cA(t) = 1 +nt + nt^2 +nt^3+ \cdots = \frac{n+1 - t}{1-t}.
\end{equation}
\end{enumerate}
\end{thm}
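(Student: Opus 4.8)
The plan is to work entirely with the deg-lex order on $\langle X\rangle$ determined by $x_1 < x_2 < \cdots < x_n$, and to establish the three parts in order, with part (1) doing most of the work. First I would check that the ideals $(\Re_0)$ and $(\Re)$ coincide. From the original relations $x_jx_p = f(x_p)x_p$ in $\Re_0$, taking $j$ and $j'$ and subtracting gives $x_jx_p = x_{j'}x_p$ for all $j,j'$; specialising $j'=1$ yields the relations in $\Re$. Conversely, from $\Re$ one gets $x_jx_p = x_1x_p$ for all $j$, so in particular $x_1x_p = f(x_p)x_p$ by applying this with the index of $f(x_p)$ on the left (here I use that $f(x_p)\in X$, so $f(x_p)=x_k$ for some $k$, and $x_kx_p = x_1x_p$). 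Hence every relation of $\Re_0$ lies in $(\Re)$ and vice versa, so $I := (\Re_0) = (\Re)$.

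Next I would prove that $\Re$ is a Gröbner basis of $I$ with respect to the chosen order. Each element $x_jx_p - x_1x_p$ with $2 \le j \le n$ has leading monomial $x_jx_p$ (since $x_j > x_1$ forces $x_jx_p > x_1x_p$ in deg-lex). The set of obstructions is $\textbf{W} = \{x_jx_p \mid 2\le j\le n,\ 1\le p\le n\}$, and the normal monomials are exactly those words containing no $x_jx_p$ with $j\ge 2$ as a factor, i.e. words in which only the first letter may differ from $x_1$; this gives precisely $\cN = \{1\}\cup\{x_1^\alpha x_m \mid \alpha\in\N_0,\ m\in\{1,\dots,n\}\}$ as in \eqref{eq:normalbasis}. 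To confirm $\Re$ is a Gröbner basis I would run the Diamond Lemma / Bergman criterion: the only ambiguities are overlaps of $\LM(x_jx_p - x_1x_p) = x_jx_p$ with $\LM(x_kx_q - x_1x_q) = x_kx_q$, which occur when $p = k \ge 2$, giving the word $x_jx_kx_q$ with $j,k\ge 2$. Resolving $x_jx_kx_q$ one way gives $x_j(x_1x_q) \to x_jx_1x_q$, whose leading monomial $x_jx_1x_q$ reduces (via $x_jx_1 \to x_1x_1$) to $x_1x_1x_q$; the other way gives $(x_1x_q)x_q$ which is already normal unless $x_q$-reduction applies — but $x_1x_q$ begins with $x_1$ so $x_1x_qx_q$ is reducible only if $x_q x_q$ with $q\ge 2$, i.e. $x_1(x_qx_q) \to x_1 x_1 x_q$. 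Both sides descend to $x_1x_1x_q = x_1^2 x_q$, so every ambiguity is resolvable, and $\Re$ is the Gröbner basis. Reducedness is immediate: no leading monomial $x_jx_p$ divides another, and each right-hand side $x_1x_p$ is normal modulo the rest. This proves part (1), and part (2)'s identification of $\cN$ follows at once; the description of $\cN_d$ in \eqref{eq:N_d} is the length-$d$ slice of $\cN$, namely $\{x_1^{d-1}x_m \mid 1\le m\le n\}$ (noting $x_1^d = x_1^{d-1}x_1$), and the ordering shown is the deg-lex one.

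For the remaining assertions of parts (2) and (3): since a PBW basis is a $\k$-basis, $\cA_d \cong \Span_\k \cN_d$ and $\dim\cA_d = |\cN_d| = n$ for $d\ge 1$, with $\dim\cA_0 = 1$; this is the Hilbert function in \eqref{eq:hilbert}, and summing the geometric-type series $1 + nt + nt^2 + \cdots = 1 + \frac{nt}{1-t} = \frac{n+1-t}{1-t}$ gives $H_\cA(t)$. The growth of $\dim\cA_{\le d} = 1 + nd$ is linear in $d$, so the Gelfand-Kirillov dimension is $1$. The isomorphism $\cA \cong (\k\cN,\bullet)$ with the graded decomposition is then exactly Bergman's Diamond Lemma as recorded in Remark~\ref{rmk:diamondlemma} and Convention~\ref{rmk:conventionpreliminary1}.

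The one place demanding care — the ``main obstacle'' — is the verification that $\Re$ is genuinely a Gröbner basis, i.e. the resolution of all overlap ambiguities $x_jx_kx_q$ with $j,k \ge 2$. One must be careful to reduce consistently (always rewriting the leftmost admissible leading monomial, or invoking the Diamond Lemma's independence-of-strategy) and to track that the intermediate word $x_jx_1x_q$ is itself reducible; a naive check that stops at $x_jx_1x_q$ versus $x_1x_qx_q$ without pushing both to the common normal form $x_1^2x_q$ would wrongly suggest an unresolved ambiguity. Once this is handled cleanly, everything else is bookkeeping.
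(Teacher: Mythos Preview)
Your proof is correct and follows essentially the same route as the paper: show $(\Re_0)=(\Re)$, verify via the Diamond Lemma that $\Re$ is a (reduced) Gr\"obner basis by resolving the length-$3$ overlaps, read off the normal basis $\cN$, and compute the Hilbert series and GK dimension. The paper frames the equivalence of $\Re_0$ and $\Re$ via $r$-orbits in $X^2$ (noting $x_1x_p$ is the minimal element of its orbit), whereas you do it by direct subtraction, but this is cosmetic.

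One slip to fix: in your overlap resolution, the ``other way'' on $x_jx_kx_q$ (reducing the left pair $x_jx_k \to x_1x_k$) yields $x_1x_kx_q$, not $(x_1x_q)x_q$. Since $k\ge 2$ by hypothesis, this reduces in one further step via $x_kx_q \to x_1x_q$ to $x_1x_1x_q$, uniformly in $q$; no case split on $q\ge 2$ versus $q=1$ is needed. This is exactly the paper's computation (with indices relabelled): $(x_kx_j)x_i \to (x_1x_j)x_i = x_1(x_jx_i) \to x_1x_1x_i$. With that correction your argument is clean.
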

\begin{proof}
The set $X^2$ splits into disjoint $r$-orbits each of which has the shape
\[
\mathcal{O} _p = \{x_1x_p \mapsto_r f(x_p)x_p\mapsto_r f(x_p)x_p, \; x_2x_p \mapsto_r f(x_p)x_p\mapsto f(x_p)x_p, \cdots, x_nx_p
\mapsto_r f(x_p)x_p \mapsto_r f(x_p)x_p\},
\]
where $p\in \{1,  2, \cdots, n\}$.

Any two elements of an $r$-orbit are equal in $\cA$, therefore
\begin{equation}
\label{eq:orbit}
x_1x_p = f(x_p)x_p,\; x_2x_p = f(x_p)x_p,\; \cdots,\;   x_nx_p = f(x_p)x_p
\end{equation}
are equalities, as part of the defining relations of the Yang-Baxter algebra $\cA$.
Observe that $x_1x_p$ is the minimal element in its $r$-orbit $\mathcal{O}_p$, therefore
the set of relations (\ref{eq:orbit}) is equivalent to
the following $n-1$ linearly independent relations
 \begin{equation}
\label{eq:rels2}
x_nx_p- x_1x_p,\quad x_{n-1}x_p - x_1x_p,\quad \cdots, \quad x_2x_p - x_1x_p.
\end{equation}
All additional relations implied by the orbit $\mathcal{O}_p$ are consequences of (\ref{eq:rels2}).
There are exactly $n$ disjoint $r$-orbits $\mathcal{O}_p$, each of which produces exactly $n-1$ relations described in
$\ref{eq:rels2}$.
Every monomial $x_jx_p$,  $2 \leq j \leq n$, occurs exactly once in a relation in (\ref{eq:rels2}). Moreover, it is the leading
monomial
of the relation $x_jx_p -x_1x_p.$ In particular,
\begin{equation}
\label{eq:Normalform}
\Nor (x_jx_p) =x_1x_p,\quad\forall\   2 \leq j \leq n,\  1 \leq p \leq n.
\end{equation}
Thus the set of relations $\Re$ given in (\ref{eq:rels1})
is equivalent to the set of relations in the original definition of the Yang-Baxter algebra $\cA$,
\[\Re_0 = \{xy- f(y) y \mid x,y \in X\}.\]
In particular,
\[\cA = \asX/(\Re).\]
We claim that the set $\Re$ is the reduced Gr\"{o}bner basis of the two sided ideal $I= (\Re)$ in $\k \asX$.
Observe that every ambiguity has the shape $x_kx_jx_i$, where $2 \leq k, j\leq n$. We give the two ways to reduce it using the
relations $\Re$:
\[
\begin{array}{c}
x_kx_jx_i =
x_k(x_jx_i) \longrightarrow x_k(x_1x_i) = (x_kx_1)x_i\longrightarrow x_1x_1x_i \in \cN(\Re),\\
x_kx_jx_i =
(x_kx_j)x_i) \longrightarrow (x_1x_j)x_i) = x_1(x_jx_i)\longrightarrow x_1x_1x_i \in \cN(\Re).
\end{array}
\]
Thus, each ambiguity is solvable and $\Re$ is a Gr\"{o}bner basis of the ideal $I= (\Re)$.
It is then clear that $\Re$ is the reduced Gr\"{o}bner basis of $I$.

The set of leading monomials  of the reduced Gr\"{o}bner basis $\Re$ of $I$ coincides with the set of obstructions
\begin{equation}
\label{eq:obstructionsb}
\textbf{W} =\{LM(f) \mid  f \in \Re \}= \{x_jx_p \mid 2 \leq j \leq n, 1 \leq p \leq n\}.
\end{equation}
There are equalities of sets  $\cN(I) = \cN(\Re) = \cN(\textbf{W})= \cN$. It is obvious that the set $\cN=\cN(\textbf{W})$ of
normal monomials is described explicitly in
(\ref{eq:normalbasis}).
It follows that $\cN_d$, which consists of all normal words of length $d$, has exactly $n$ elements given in (\ref{eq:N_d}), so
$\dim \cA_d = n$, for all $d \geq 1$. Hence,  the algebra $\cA$ has the Hilbert series stated.

It follows from Bergman's Diamond lemma \cite[Theorem 1.2]{Bergman} that
if we consider the space $\k  \cN$
endowed with multiplication defined by
 \[f \bullet g := \Nor (fg),\quad \forall\  f,g \in \k  \cN\]
then
$(\k  \cN, \bullet )$
has a well-defined structure of a graded algebra, and there is an isomorphism of
graded algebras as stated in part (2). \end{proof}

By convention, we shall often identify the algebra  $\cA$ with $(\k \cN, \bullet )$. Figure~\ref{figGamma}(a)
 illustrates some of the above, with on the left a simple 3-element example. The original relations are
\[ x_1x_1=x_2x_1,\quad x_3x_1=x_2x_1,\quad x_1x_2=x_3x_2,\quad x_2x_2=x_3x_2, \quad x_2x_3=x_1x_3,\quad x_3x_3=x_1x_3,\]
while the new (equivalent) relations of $\Re$ are
\[ x_2x_1-x_1x_1,\quad x_3x_1-x_1x_1,\quad x_2x_2-x_1x_2,\quad x_3x_2-x_1x_2,\quad x_2x_3-x_1x_3,\quad x_3x_3-x_1x_3.\]
Figure~\ref{figGamma}(b) shows the corresponding graph of normal words. For a PBW algebra, the graph $\Gamma=\Gamma_{\textbf{N}}$
of normal words
is defined with vertices the generators  $\{x_1, \cdots, x_n\}$ and an arrow $x_i \longrightarrow x_j$ if the word $x_ix_j$ is
normal,  for $1\leq i,j\leq n$. Note in this example there is only one loop. The same applies in general in part (c).
More details and applications of the graph $\Gamma=\Gamma_{\textbf{N}}$ for arbitrary PBW algebras are given in
Section~\ref{sec:graphs}.

\begin{figure}
\[ \includegraphics[scale=0.35]{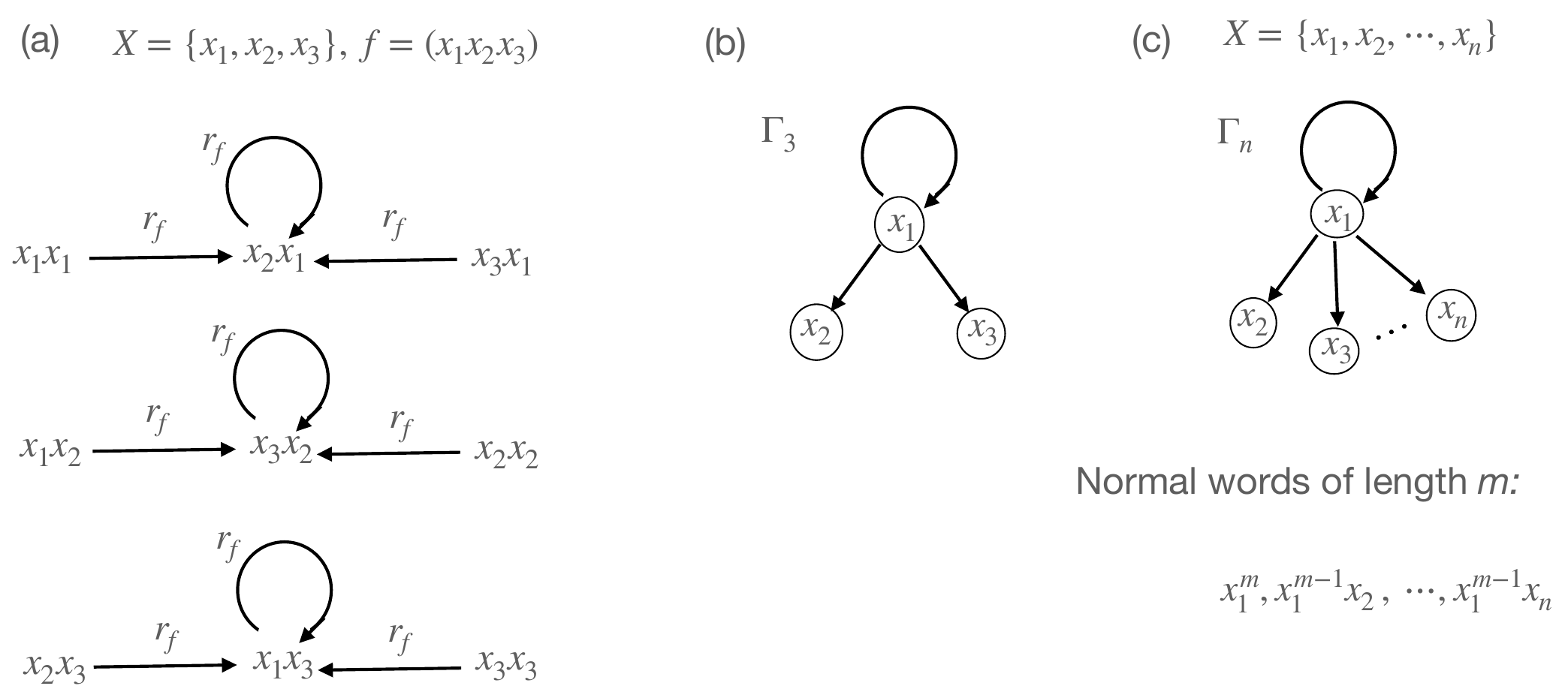}\]
\caption{(a) Graph of $r_f$ orbits in $X^2$ for a set of 3 elements and $f$ a 3-cycle. (b)-(c) Graph of normal words for  3 and in
general $n$ elements.  \label{figGamma}}
\end{figure}

\begin{cor}
\label{cor:main}
Every two permutation idempotent solutions $(X,r_f)$ and $(X,r_g)$, where $f, g \in \Sym(X)$,
have isomorphic Yang-Baxter algebras, $\cA(\k,X,r_f)\cong \cA(\k,X,r_g)$. For a fixed enumeration $X= \{x_1, \cdots, x_n\}$, these
algebras share the same standard finite presentation given in (\ref{eq:rels1}) and the same $\k$-bases $\cN$ of normal words
given explicitly in (\ref{eq:normalbasis}).
\end{cor}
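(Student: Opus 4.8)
The plan is to deduce the corollary directly from Theorem~\ref{thm:YBalgPermSol}, with essentially no extra work. Fix the enumeration $X=\{x_1,\dots,x_n\}$ and extend it to the deg-lex order on $\langle X\rangle$. The key observation is that the ``new'' presentation produced in part~(1) of Theorem~\ref{thm:YBalgPermSol}, namely
\[\cA=\k\langle X\rangle/(\Re),\qquad \Re=\{x_jx_p-x_1x_p\mid 2\le j\le n,\ 1\le p\le n\},\]
together with the two-sided ideal $I=(\Re)$ it generates, does not mention the permutation $f$ at all: the dependence on $f$ sits entirely in the original relation set $\Re_0$, which the theorem shows is equivalent to $\Re$. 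Hence, applying Theorem~\ref{thm:YBalgPermSol}(1) once to $(X,r_f)$ and once to $(X,r_g)$ yields $\cA(\k,X,r_f)=\k\langle X\rangle/(\Re)=\cA(\k,X,r_g)$ as literally the same quotient of the free algebra, and in particular the two Yang-Baxter algebras are isomorphic and share the stated standard finite presentation~(\ref{eq:rels1}).

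Next I would record the basis statement from the same input. By Theorem~\ref{thm:YBalgPermSol}(1)--(2), $\Re$ is the reduced Gr\"{o}bner basis of $I$ and the set of normal monimials $\cN=\cN(\Re)=\cN(I)=\{1\}\cup\{x_1^{\alpha}x_m\mid \alpha\in\N_0,\ m\in\{1,\dots,n\}\}$ of~(\ref{eq:normalbasis}) is a PBW $\k$-basis of the corresponding algebra. Since $\Re$ and $I$ are identical for $r_f$ and for $r_g$, so are $\textbf{W}$, $\cN$ and its graded pieces $\cN_d$; the induced identification $\cA(\k,X,r_f)\cong(\k\cN,\bullet)\cong\cA(\k,X,r_g)$ of graded algebras is simply the identity on $\k\cN$. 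This gives the common $\k$-basis assertion.

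Finally, for the bare statement $\cA(\k,X,r_f)\cong\cA(\k,X,r_g)$ without reference to a chosen enumeration, I would note that any enumeration of $X$ works and produces the same literal equality of presented algebras; passing between two enumerations of $X$ is a relabelling of the generators $x_1,\dots,x_n$, which extends to an automorphism of $\k\langle X\rangle$ carrying $\Re$ (for one enumeration) to $\Re$ (for the other), hence descends to an isomorphism of Yang-Baxter algebras. There is no genuine obstacle here: the entire content of the corollary is already packaged in Theorem~\ref{thm:YBalgPermSol}, whose purpose was precisely to trade the $f$-dependent defining relations $\Re_0$ for the $f$-independent reduced Gr\"{o}bner basis $\Re$; the corollary merely reads off that this presentation, and the resulting normal basis, do not depend on $f$.
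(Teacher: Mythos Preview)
Your proposal is correct and matches the paper's approach: the corollary is stated without proof in the paper, as it follows immediately from Theorem~\ref{thm:YBalgPermSol} by observing that the presentation~(\ref{eq:rels1}) and the normal basis~(\ref{eq:normalbasis}) are independent of $f$. Your write-up simply makes this reasoning explicit.
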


Another consequence of Theorem~\ref{thm:YBalgPermSol} is the following.

\begin{cor}\label{rmk:normalforms} The normal $\k$-basis of $\cA(\k,X,r_f)$ is
\[
\cN = \{1\}\cup\{ x_1^mx_q\mid m \geq 0, 1 \leq q \leq n\}
\]
and the set of normal words of length $d$ is
\[
\cN_d = \{ w_q = x_1^{d-1}x_q\mid 1 \leq q \leq n\}.
\]
Moreover, the equalities
\begin{equation}
\label{eq:Nor2}
y_1y_2\cdots y_{d-1}x_q = (x_1)^{d-1}x_q, \quad\forall\  y_i\in X,\  q \in \{1, \cdots, n\}
\end{equation}
hold in $S(X,r_f)$ and hence in $\cA(\k,X,r_f)$.
\end{cor}
\begin{proof} The form of $\cN$ is from Theorem~\ref{thm:YBalgPermSol}.  Moreover,  every word in
$u \in \asX$ has a unique normal form $\Nor (u)$ (modulo $I$).
It follows from the Diamond Lemma that
\[
\Nor (uv) = \Nor(\Nor(u)\Nor(v)), \quad\forall\  u, v \in \asX.
\]
The  shape of the relations (\ref{eq:rels1}) imply that
\[
\Nor(x_jx_p) = x_1x_p,\quad\forall\ 1 \leq j, p \leq n.
\]
Applying these two rules, and induction on $d$ one yields the following
\begin{equation}
\label{eq:Nor1}
\Nor(y_1y_2\cdots y_{d-1}x_q) = (x_1)^{d-1}x_q,\quad\forall\  y_i\in X,\  q \in \{1, \cdots, n\}.
\end{equation}
 This then implies the equalities stated. \end{proof}

Next, it is proven in \cite[Prop 3.11]{Colazzo22} that the YB-algebra $\cA$ of a finite left non-degenerate idempotent solution is
a finite module over a $\textbf{k}$-subalgebra that is isomorphic to a polynomial algebra in one variable.

As a direct application
of Theorem \ref{thm:YBalgPermSol} and Corollary~\ref{rmk:normalforms}, we now prove a stronger result that the YB-algebra of
an idempotent permutation solution $(X, r_f)$ is a free module of rank $n$ over $\textbf{k}[x_1]$ and give an explicit
$\textbf{k}[x_1]$-basis (a set of free generators) of this free module. Our proof does not make use of \cite[Prop
3.11]{Colazzo22}.

 \begin{pro}
\label{pro:cor:22.2}
The Yang-Baxter algebra $\cA(\k, X, r_f)$ of a finite permutation idempotent solution $(X, r_f)$, where $X= \{x_1, \cdots,
x_n\}$, is a free left module of rank $n$ over the polynomial subalgebra $\k [x_1]$, with free generators $1, x_2, \cdots, x_n$.
 \end{pro}

 \begin{proof}
Let $R= \k [x_1]$ be the subalgebra of $\cA$ generated by $x_1$, it is a commutative polynomial algebra.
Consider the left $R$-module generated by $1, x_2, \cdots, x_n$:
$M =R + \sum_{j=2}^n Rx_j \subseteq \cA$. By the Diamond Lemma, the algebra $\cA$ is identified with the algebra $(\k \cN,
\bullet)$.
More precisely, if $a\in \cA$ is not a constant in $\k $ then we use (\ref{eq:Nor1}) in Corollary~\ref{rmk:normalforms} to find
 a  unique presentation of $a$ as a finite linear combination of normal words in $\cA$:
\[a = a_0 +\sum _{k=1}^q\sum_{j=1}^n \alpha_{kj} x_1^{k-1}x_{j};\quad   a_0,\alpha_{kj} \in \k , \]
 which can be written as
\[a = (a_0+f_1(x_1)x_1)+ f_2(x_1)x_2 + \cdots +f_n(x_1)x_n \in M;\quad f_j(x_1)=\sum _{k=1}^{m_j} \alpha_{kj} x_1^{k-1}, a_0\in
\k .\]
It follows that $\cA =  M$.
Moreover, $1, x_2, \cdots, x_n$ is a set of free generators (a left basis) of $M$ over $\k [x_1]$.
Indeed, assume there is a relation of the form
\[g_1.1 + g_2x_2 + \cdots +g_nx_n= 0;\quad  g_i = \sum_{k=0}^{m_j} \beta_{ik}x_1^k \in  \k [x_1],\quad  1\leq i \leq n.\]
This implies
\begin{equation}
\label{eq:combination}
  \sum_{k=0}^{m_1} \beta_{1k}x_1^k+ \sum_{k=0}^{m_2} \beta_{2k}x_1^kx_2+ \cdots +\sum_{k=0}^{m_n} \beta_{nk}x_1^k x_n = 0,
 \end{equation}
which is a relation involving only distinct monomials from the normal basis $\cN$. This implies that all coefficients
$\beta_{sk}$ occurring in (\ref{eq:combination}) equal zero, and therefore $g_1(x_1)= g_2(x_1)=  \cdots= g_n(x_1)=0$. It follows
that the set
$1, x_2, \cdots , x_n$ is a left basis of the left $\k [x_1]$-module  $M$, so $M$ is a free left $\k [x_1]$-module of rank $n$.
\end{proof}

\begin{rmk} \label{cor:22}  We can
 recover some known properties of nondegenerate idempotent solutions, now by direct application of Theorem \ref{thm:YBalgPermSol} and  Proposition \ref{pro:cor:22.2}. Namely, it is proven in  \cite{Colazzo22}
that the Yang-Baxter algebra $\cA(\k, X, r_f)$ of a finite permutation idempotent solution $(X, r_f)$, where $X= \{x_1, \cdots,
x_n\}$, is
\begin{enumerate}
\item
\label{cor:1}
 Koszul;
 \item
\label{cor:3}  Left Noetherian;
    \item
    \label{cor:4}
     Of Gelfand-Kirillov dimension one;
     \item
     \label{cor:4a} a PI algebra, that is satisfies a polynomial identity;
    \end{enumerate}

In our case, (1) holds because  every PBW algebra is Koszul, see \cite{priddy} and we have seen that $\cA(k,X,r_f)$ is PBW. Similarly, (\ref{cor:3}) and (\ref{cor:4}) follow directly from Proposition \ref{pro:cor:22.2}. It is known that if $ A$ is a finite module over some subalgebra $B$, then
$GK\dim(A)=GK\dim(B)$. Clearly $GK\dim(\k [x_1]) = 1,$ and therefore $GK\dim A = 1$. As an alternative proof one can use directly
the graph of normal words $\Gamma_{\textbf{N}}$ which has the shape in Figure~\ref{figGamma} (c). It has just one cycle
(a loop) passing through the vertex $x_1$,  and therefore by Remark~\ref{rmk:growth},  $GK \dim A = 1$. Part (\ref{cor:4a}) is then
clear since, by   \cite[Thm.~2.2]{SSW85}, each affine (finitely generated) $\k$-algebra $A$ with $GK\dim A = 1$ is PI.
 \end{rmk}

\section{PBW algebras and their associated dual graphs}
\label{sec:graphs}

In this section, we consider general PBW algebras $A$ and investigate the correlation between
the Gelfand Kirillov
dimension $GK\dim A$  and  its global dimension $\gldim A$. To do this, we will make use of a pair of mutually dual graphs associated to
a PBW algebra in \cite[Section 3]{GI12}. We first find  some new properties of the dual graphs and then use further combinatorial arguments to  prove the main results of the section.

Namely, Theorem \ref{thm:gldiminf} is a new result on (general) PBW algebras which proves that an $n$-generated PBW algebra $A$  has
infinite global dimension whenever  $GK\dim A =m
< n$.  Lemma \ref{lem:max_graphgkdim1} gives information about the graphs of normals words for such algebras  with $GK\dim A= 1$
and $n(n-1)$ quadratic relations (or equivalently, $\dim A_2 = \binom{n}{2}+1$). We conclude with Theorem \ref{thm:new} and
Corollary \ref{cor:new} which are new results on YB-algebras $\cA$ of general left nondegenerate idempotent solutions in
the particular case when $\cA$ is PBW.

 Let $A = \k \asX/(\Re)$ be a PBW algebra with a set of PBW-generators $X=\{x_1, \cdots , x_n\}$ $n \geq 2$, where $\Re$ is the
 reduced
Gr\"{o}bner basis of the ideal $I =(\Re)$.
The set $\textbf{W} = \{LM (f) \mid f\in \Re\}$ is called \emph{the set of obstructions} (in the sense of Anick)\cite{Anickhom}.
Then the set of normal words $\cN$ modulo $I$ coincides with the set of normal words modulo $\textbf{W}$, $\cN(I) =
\cN(\textbf{W})= \cN(\Re)$.
In this section,  $\textbf{N}$ will denote the set of normal words of length $2$,
\[\textbf{N} = \cN_2.\]
Note that $X^2$ splits as a disjoint union
\begin{equation}
\label{eq:duality1}
X^2 = \textbf{W} \cup \textbf{N};\quad \textbf{N} = X^2 \setminus \textbf{W},\quad \textbf{W} = X^2 \setminus \textbf{N}.
\end{equation}
Each PBW algebra $A$ has a canonically associated \emph{monomial algebra} $A_{\textbf{W} }= \k \asX/(\textbf{W})$. As
a monomial algebra, $A_{\textbf{W}}$ is also PBW. In fact, the set of monomials  $\textbf{W}$ is a quadratic Gr\"{o}bner basis of
the ideal $J =(\textbf{W})$ with respect to any (possibly new) enumeration of $X$.
Both algebras $A$ and $A_{\textbf{W}}$ have the same set of obstructions $\textbf{W}$
and therefore they have the same normal $\textbf{k}$-basis $\cN$, the same Hilbert series and the same
growth. It follows from results of Anick \cite{Anickhom} that they share the same global dimension
 \[\gldim  A = \gldim  A_{\textbf{W}}.\]
 More generally, the set of
obstructions $\textbf{W}$ determines uniquely the Hilbert series, the growth (Gelfand-Kirillov dimension) and the global dimension
for
the whole family of PBW algebras $A$ sharing the same obstruction set $\textbf{W}$.
In various cases, especially when we are interested in the type of growth or the global dimension of a PBW algebra $A$,  it is more
convenient to work with the corresponding monomial algebra $A_{\textbf{W}}$.

 Each PBW algebra $A$ with a set of PBW-generators $X= \{x_1, \cdots, x_n\}$ and an obstructions set $\textbf{W}$ has two
 associated dual graphs:
 $\Gamma_{\textbf{N}}$, the graph of normal words and $\Gamma_{\textbf{W}}$, the graph of obstructions, see \cite{GI12}, Sec 3 for
 more details. Here we recall some basics.

\begin{dfn}
\label{dfn:graphM}
 Let $M \subset X^2$ be a set of monomials of length $2$. We define the graph $\Gamma_M$
corresponding to $M$ as a directed graph with a set of vertices $V (\Gamma_M) = X$ and a set of directed
edges (arrows) $E = E(\Gamma_M)$ defined as
\[
x \longrightarrow y \in  E\quad \text{iff}\quad  x, y \in X,\  xy \in  M.
\]
Denote by $\widetilde{M}$ the complement $X^2 \setminus M$ and define the `dual' graph $\Gamma_{\widetilde{M}}$ by
$x \longrightarrow y \in  E(\Gamma_{\widetilde{M}})$ \emph{iff} $x,y\in X$ and $x \longrightarrow y$ is not an edge of $\Gamma_M$.
\end{dfn}
Let $A$ be a PBW algebra, let $\textbf{W}$ and $\textbf{N}$ be the set of obstructions and the set of normal
monomials of length 2, respectively.   Then the graph $\Gamma=\Gamma_{\textbf{N}}$ is called \emph{the graph of normal words} of
$A$. Due to
(\ref{eq:duality1}),
the set of obstructions $\textbf{W}$ also determines a graph $\Gamma_{\textbf{W}}$, called \emph{the graph of obstructions} defined
analogously and `dual' to $\Gamma_{\textbf{N}}$ in the above sense.

We recall that \emph{the order of a graph} $\Gamma$ is the number of its vertices, $|V (\Gamma)|$, so $\Gamma_{\textbf{N}}$ is a
graph of
order $|X|$. \emph{A path of length} $k-1$, $k \geq 2$ in $\Gamma$ is a sequence of edges $v_1 \longrightarrow v_2\longrightarrow
\cdots \longrightarrow v_k$, where
$v_i \longrightarrow v_{i+1} \in  E$. \emph{A cycle} of length $k$ in $\Gamma$ is a path of the shape
$v_1 \longrightarrow v_2\longrightarrow \cdots \longrightarrow v_k\longrightarrow v_1$, where $v_1, \cdots , v_k$ are distinct
vertices. \emph{A
loop} is a cycle of length $1$,
$x \longrightarrow x\in E$. So the graph $\Gamma_{\textbf{N}}$
contains a loop $x \longrightarrow x$ whenever $xx \in  \textbf{N}$ and a cycle of length two $x \longrightarrow y \longrightarrow
x$ whenever
$xy, yx \in  \textbf{N}$. In the latter case, $x \longrightarrow y, y \longrightarrow x$ are called \emph{bidirected edges}.
Following
the terminology in graph theory, we distinguish between directed and oriented graphs. A directed
graph having no bidirected edges is \emph{an oriented graph}. An oriented graph without
cycles is \emph{an acyclic oriented graph}.

For example if $(X,r_f)$ is an arbitrary permutation solution on the set $X=\{x_1. \cdots, x_n\}$ and $\cA$ is its YB-algebra, the
set of normal words of length $2$ is $\textbf{N}= \{x_1x_j\mid 1 \leq j \leq n\}$, so the graph $\Gamma_{\textbf{N}}$ has order
$n$, one loop $x_1\longrightarrow x_1$  and exactly $n-1$ additional directed edges $x_1 \longrightarrow x_j$, $2 \leq  j \leq n$
as in Figure~\ref{figGamma} (c).

In general, the graph of normal words  $\Gamma_{\textbf{N}}$ of a given PBW algebra is a directed graph which may contain
bidirected edges, so it is not necessarily an oriented graph. Also observe that a directed graph $\Gamma$ with a set of vertices $V
= \{x_1 \cdots, x_n\}=X$ and a set of directed edges $E(\Gamma)$ determines uniquely a quadratic monomial algebra $A$. Indeed,
consider the set of words $\textbf{N}=\{xy \in X^2 \mid x\longrightarrow y \in E(\Gamma)\}$, and let $\textbf{W}= X^2 \setminus
\textbf{N}$. Then the monomial algebra $A=\k \asX/(\textbf{W})$ has $x_1, \cdots, x_n$ as a set of PBW-generators, $\textbf{W}$ as
a set of obstructions and  $\textbf{N}$ as a set of normal words of length $2$. Moreover,  $\Gamma=\Gamma_{\textbf{N}}$.

The graph of normal words $\Gamma_{\textbf{N}}$
was introduced in a more general context by Ufnarovski and the following remark is a particular case of a more general result of
\cite{ufnarovski}.
\begin{rmk}
\label{rmk:growth}
Let  $A$ be a PBW algebra and let $\cN$ be its set of normal words, with $\textbf{N}= \cN_2$. Then:
\begin{enumerate}\item[(i) ]For every $m\geq 1$, there is a one-to-one correspondence between the set $N_m$ of normal
words of length $m$ and the set of paths of length $m-1$ in the graph $\Gamma_{\textbf{N}}$. The path
$a_1 \longrightarrow a_2 \longrightarrow \cdots \longrightarrow a_m$ (these are not necessarily distinct vertices) corresponds
to the word $a_1a_2 \cdots a_m\in N_m$.
\item[(ii)]  $A$ has exponential growth \emph{iff} the graph $\Gamma_{\textbf{N}}$ has two intersecting cycles.
\item[(iii)] $A$ has polynomial growth of degree $m$ ($GK\dim A=m$) \emph{iff} $\Gamma_{\textbf{N}}$ has no intersecting cycles
    and $m$ is the largest
number of (oriented) cycles occurring in a path of $\Gamma_{\textbf{N}}$.
\end{enumerate}
\end{rmk}

The graph of obstructions $\Gamma_{\textbf{W}}$ can be used to determine explicitly the global dimension of a PBW algebra. The
following result is proven by the first author in \cite[Sec.~3]{GI12}.
\begin{lem} \cite{GI12}
\label{lem:gldim}
A PBW algebra $A$ has finite global dimension $d$ \emph{iff} $\Gamma_{\textbf{W}}$ is an acyclic oriented graph
and $d-1$ is the maximal length of a path occurring in $\Gamma_{\textbf{W}}$.
\end{lem}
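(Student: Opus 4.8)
The plan is to reduce to the associated monomial algebra and to compute the projective dimension of the ground field by Anick's resolution. As recalled just before the statement, $A$ and the monomial algebra $B:=A_{\textbf{W}}=\k\asX/(\textbf{W})$ have the same obstruction set and hence, by Anick's results, $\gldim A=\gldim B$; so it suffices to treat $B$, for which $\gldim B=\mathrm{pd}_B\k=\sup\{m:\mathrm{Tor}^B_m(\k,\k)\neq 0\}$ since $B$ is connected graded.

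First I would invoke Anick's chain resolution of $\k$ over the monomial algebra $B$: its term in homological degree $m$ is free on the set of $(m-1)$-chains, and since $B$ is monomial this resolution is minimal, so $\mathrm{Tor}^B_m(\k,\k)$ has a basis indexed by the $(m-1)$-chains. Because $\textbf{W}\subseteq X^2$ is quadratic, an $(m-1)$-chain is exactly a word $x_{i_0}x_{i_1}\cdots x_{i_{m-1}}$ with $x_{i_j}x_{i_{j+1}}\in\textbf{W}$ for every $j$, which by Definition~\ref{dfn:graphM} is the same thing as a path $x_{i_0}\to x_{i_1}\to\cdots\to x_{i_{m-1}}$ of length $m-1$ in $\Gamma_{\textbf{W}}$. (Alternatively, since PBW algebras are Koszul, one may argue via $\mathrm{Tor}^B_m(\k,\k)\cong (B^!_m)^*$ with $B^!=\k\asX/(\textbf{N})$ the monomial algebra on the complementary set $\textbf{N}=X^2\setminus\textbf{W}$, whose normal words of length $m$ are precisely the length-$(m-1)$ paths in $\Gamma_{\textbf{W}}$.) In either description, $\mathrm{Tor}^B_m(\k,\k)\neq 0$ if and only if $\Gamma_{\textbf{W}}$ contains a path of length $m-1$.

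It then remains to read off the global dimension from the finite graph $\Gamma_{\textbf{W}}$. If $\Gamma_{\textbf{W}}$ is not an acyclic oriented graph it contains a directed cycle — a loop, a bidirected edge, or a longer directed cycle — which can be traversed arbitrarily often to produce paths of every length, so $\mathrm{Tor}^B_m(\k,\k)\neq 0$ for all $m$ and $\gldim B=\infty$. If instead $\Gamma_{\textbf{W}}$ is an acyclic oriented graph, a topological ordering of its $n=|X|$ vertices shows that no path repeats a vertex, so the maximal path length $\ell$ satisfies $\ell\le n-1$; then $\mathrm{Tor}^B_m(\k,\k)\neq 0$ exactly for $1\le m\le \ell+1$, whence $\gldim B=\ell+1$. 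Setting $d=\gldim A=\gldim B$ gives $d-1=\ell$, the maximal length of a path occurring in $\Gamma_{\textbf{W}}$, as claimed.

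The step needing the most care is the homological bookkeeping — ensuring that an $(m-1)$-chain, i.e.\ a length-$(m-1)$ path in $\Gamma_{\textbf{W}}$, contributes exactly to $\mathrm{Tor}_m$, so that $d-1$ (and not $d$) equals the maximal path length — together with the (standard but essential) fact that Anick's resolution is minimal over a monomial algebra, so that no chains cancel in homology. Everything else is an unwinding of the definitions of chains, of $\Gamma_{\textbf{W}}$, and of "acyclic oriented graph", plus the elementary observation that a finite graph has bounded path lengths if and only if it has no directed cycle.
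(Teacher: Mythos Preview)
The paper does not actually prove this lemma: it is stated with a citation and the text just before it says ``The following result is proven in \cite[Sec.~3]{GI12}.'' So there is no in-paper proof to compare against. That said, your argument is correct and is essentially the standard one behind the cited result. The reduction to the monomial algebra $B=A_{\textbf{W}}$ via Anick is exactly what the paper invokes, and your identification of Anick $(m-1)$-chains for a quadratic monomial ideal with paths of length $m-1$ in $\Gamma_{\textbf{W}}$, together with minimality of the Anick resolution in the monomial case, gives $\dim_\k\mathrm{Tor}^B_m(\k,\k)$ equal to the number of such paths. Your Koszul-dual alternative is equally valid and perhaps cleaner here: $B^{!}=\k\asX/(\textbf{N})$ has as its normal words of degree $m$ precisely the paths of length $m-1$ in $\Gamma_{\textbf{W}}$, and Koszulity of PBW algebras gives $\mathrm{Tor}^B_m(\k,\k)\cong (B^{!}_m)^*$. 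The only point worth making explicit for a reader is that ``not an acyclic oriented graph'' in this paper's terminology is exactly ``contains a directed cycle'' (loops and bidirected edges being cycles of length $1$ and $2$), which you do address.
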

\begin{cor}
\label{cor:gldim}
A PBW algebra $A$ has infinite global dimension \emph{iff} the graph of obstructions $\Gamma_{\textbf{W}}$ has a cycle.
\end{cor}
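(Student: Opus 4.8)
The plan is to deduce this corollary directly from Lemma~\ref{lem:gldim} by a simple contrapositive argument. Lemma~\ref{lem:gldim} characterizes \emph{finite} global dimension by two conditions on the graph of obstructions $\Gamma_{\textbf{W}}$: (a) $\Gamma_{\textbf{W}}$ is an acyclic oriented graph, and (b) the maximal length of a path in $\Gamma_{\textbf{W}}$ is finite (and equals $d-1$). Since $A$ is finitely generated, $\Gamma_{\textbf{W}}$ has finitely many vertices, so condition (b) is automatic once (a) holds (an acyclic oriented graph on finitely many vertices has only finitely many paths, all of bounded length). Hence $A$ has finite global dimension \emph{iff} $\Gamma_{\textbf{W}}$ is an acyclic oriented graph, and therefore $A$ has \emph{infinite} global dimension \emph{iff} $\Gamma_{\textbf{W}}$ fails to be an acyclic oriented graph.

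First I would make precise what ``fails to be an acyclic oriented graph'' means and show it is equivalent to ``$\Gamma_{\textbf{W}}$ has a cycle'' in the sense relevant here. Recall from the discussion preceding Lemma~\ref{lem:gldim} that an oriented graph is a directed graph with no bidirected edges, and an acyclic oriented graph is an oriented graph with no cycles. So $\Gamma_{\textbf{W}}$ fails to be acyclic oriented if either (i) it has a bidirected edge $x \longrightarrow y \longrightarrow x$, or (ii) it is oriented but contains a cycle $v_1 \longrightarrow \cdots \longrightarrow v_k \longrightarrow v_1$ of length $k \geq 3$. But a bidirected edge is precisely a cycle of length two, and a loop $x\longrightarrow x$ is a cycle of length one; so in all cases ``not an acyclic oriented graph'' is the same as ``contains a cycle of some length $k \geq 1$'', which is the statement of the corollary.

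The remaining point to check is that in the infinite global dimension case Lemma~\ref{lem:gldim} genuinely gives us nothing stronger, i.e. that the lemma's ``iff'' really does partition PBW algebras into the two cases (finite gldim $\leftrightarrow$ acyclic oriented $\Gamma_{\textbf{W}}$) versus (infinite gldim $\leftrightarrow$ $\Gamma_{\textbf{W}}$ has a cycle). Since ``$\Gamma_{\textbf{W}}$ is acyclic oriented'' and ``$\Gamma_{\textbf{W}}$ has a cycle'' are mutually exclusive and exhaustive alternatives for a directed graph on a finite vertex set, and Lemma~\ref{lem:gldim} identifies the first alternative with finiteness of $\gldim A$ (and simultaneously computes the value), the negation gives the corollary immediately.

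I do not anticipate a real obstacle here; the only subtlety worth a sentence is the observation that finiteness of the vertex set $X$ makes condition (b) of Lemma~\ref{lem:gldim} redundant, so that the dichotomy is clean. One could alternatively cite the general homological fact that a quadratic monomial algebra (equivalently, by Anick, the PBW algebra $A$ sharing its obstruction set $\textbf{W}$, as noted after \eqref{eq:duality1}) has finite global dimension precisely when the associated directed graph is acyclic in the oriented sense, but invoking Lemma~\ref{lem:gldim} directly is the shortest route.
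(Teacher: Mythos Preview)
Your proposal is correct and is exactly the intended argument: the paper states the corollary without proof immediately after Lemma~\ref{lem:gldim}, so it is meant as the direct contrapositive you describe, using that the finite vertex set makes the path-length condition automatic and that ``not acyclic oriented'' is synonymous with ``contains a cycle'' (bidirected edges being cycles of length two).
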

Algorithmic methods for the computation of global dimension of standard finitely presented algebras with polynomial growth in a
more general context were first proposed in \cite{GI88}.

A complete oriented graph $\Gamma$ is called a \emph{tournament} (or a \emph{tour}). In other words, a tournament is a
directed graph in which each pair of vertices is joined by a single edge having a unique direction.
Clearly, a complete directed graph without cycles (of any length) is an acyclic tournament.  The
following is well known in graph theory.
\begin{rmk}
\label{rmk: graph1}
\begin{enumerate}
\item
An
acyclic oriented graph $\Gamma$ with $n$ vertices is a tournament \emph{iff}  $\Gamma$ has exactly $\binom{n}{2}$ (directed)
edges.
\item
 Let $\Gamma$ be an acyclic tournament of order $n$. Then the set of its vertices $V = V (\Gamma)$
can be labeled $V = \{y_1, y_2, \cdots , y_n \}$ so that the set of edges is
\begin{equation}
E(\Gamma ) = \{y_i \longrightarrow y_j \mid  1 \leq i < j \leq n\}.
\end{equation}
Analogously, the vertices can be labeled $V = \{z_1, z_2, \cdots , z_n \}$ so that
\[E(\Gamma) = \{z_j \longrightarrow z_i \mid  n \geq j > i \geq 1\}.\]
\end{enumerate}
\end{rmk}

The proof of the following lemma was kindly communicated by Peter Cameron.
\begin{lem}
\label{lem:peter}
Suppose $\Gamma$ is an acyclic directed graph with a set of vertices $V= \{x_1, \cdots, x_n\}$.  Then $\Gamma$ is a subgraph of an
acyclic tournament $\Gamma_0$ with the same set of vertices.
\end{lem}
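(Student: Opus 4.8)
The plan is to exploit the fact that an acyclic directed graph admits a topological ordering of its vertices, and then to "fill in" all the missing edges consistently with that ordering. First I would recall that since $\Gamma$ is acyclic, there is no directed cycle, so by the standard topological-sort argument one can relabel the vertices as $V=\{x_{\sigma(1)},\dots,x_{\sigma(n)}\}$ for some permutation $\sigma$ so that every edge of $\Gamma$ goes from a lower-indexed vertex to a higher-indexed one; that is, if $x_{\sigma(i)}\longrightarrow x_{\sigma(j)}\in E(\Gamma)$ then $i<j$. (The existence of such an ordering is a well-known equivalent of acyclicity: pick a vertex with no incoming edge — one exists since otherwise following incoming edges backwards would produce a cycle — remove it, and induct.)

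Next I would simply define $\Gamma_0$ to be the complete oriented graph on the same vertex set $V$ whose edge set is
\[
E(\Gamma_0)=\{x_{\sigma(i)}\longrightarrow x_{\sigma(j)}\mid 1\leq i<j\leq n\}.
\]
By construction $\Gamma_0$ has exactly $\binom{n}{2}$ edges, exactly one between each pair of distinct vertices, and it is acyclic: any directed path in $\Gamma_0$ has strictly increasing indices, so it cannot return to its starting vertex. Hence $\Gamma_0$ is an acyclic tournament, by Remark~\ref{rmk: graph1}(1). Finally, every edge $x_{\sigma(i)}\longrightarrow x_{\sigma(j)}$ of $\Gamma$ satisfies $i<j$ by the choice of ordering, hence lies in $E(\Gamma_0)$; therefore $\Gamma\subseteq\Gamma_0$ as a subgraph with the same vertex set, which is the claim.

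The only real content is the existence of the topological ordering, i.e. that an acyclic directed graph has a vertex of in-degree zero and that this property is inherited by vertex-deleted subgraphs; everything after that is bookkeeping. I do not anticipate a genuine obstacle here, though one should be slightly careful that "acyclic directed graph" in the paper's sense (an oriented graph, so no bidirected edges, and no cycles of any length including no loops) is exactly what makes the topological sort available — in particular loops are already excluded, so the diagonal pairs $x_ix_i$ cause no trouble when one later passes to the associated monomial algebra.
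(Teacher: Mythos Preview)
Your proof is correct, but it takes a different route from the paper's. You invoke the existence of a topological ordering of an acyclic directed graph and then fill in all $\binom{n}{2}$ edges at once according to that linear order; this is clean and entirely standard. The paper instead argues by induction on the number of missing edges: given two non-adjacent vertices $x,y$, it shows directly that at least one of the two possible orientations $x\to y$ or $y\to x$ can be added without creating a cycle, since otherwise there would already be paths $y\to\cdots\to x$ and $x\to\cdots\to y$ in $\Gamma$, whose concatenation is a cycle. Your approach is shorter if one is willing to quote topological sort as a known fact; the paper's argument is self-contained and has the mild advantage of not needing to name that auxiliary result, exhibiting instead an explicit local obstruction argument. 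Either way the content is the same: acyclicity of a finite digraph is equivalent to the existence of a compatible linear order, and the tournament built from such an order does the job.
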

\begin{proof}
We claim that one can add new directed edges to connect every two vertices in $V$ which are not connected in such a way that the
resulting graph $\Gamma_0$ is an acyclic tournament. This can be proved by induction on the number of missing edges. So all we have
to do for the inductive step is to add one edge. Suppose that $x, y \in V$ are not joined. Then we claim that we can put an edge
between them without creating a cycle.
Suppose this is false. Then adding $x\longrightarrow y$ would create a cycle $x \longrightarrow y \longrightarrow
u_1\longrightarrow\cdots \longrightarrow u_r \longrightarrow x$, and adding $y\longrightarrow x$ would create a cycle
$y \longrightarrow x \longrightarrow v_1\longrightarrow\cdots \longrightarrow v_s \longrightarrow y.$
 But then there is a cycle
 \[y \longrightarrow u_1\longrightarrow \cdots \longrightarrow u_r \longrightarrow x \longrightarrow v_1\longrightarrow \cdots
 \longrightarrow v_r \longrightarrow y,\]
 contradicting that we start with an acyclic directed graph.
\end{proof}
\begin{lem}
\label{lem:max_graphgkdim1} Let $A= A_{\textbf{W}}$ be a quadratic monomial algebra generated by $X=\{x_1, \cdots, x_n\}$ and
presented as $A_W = \k \langle x_1, \cdots, x_n\rangle/(\textbf{W})$, where $\textbf{W }\subset X^2$ is a set of monomials of
length $2$. Let
$\textbf{N}$ be the set of normal words of length $2$ and assume that $x_1x_1 \in \textbf{N}$ , and that each vertex $x_j$ in the
graph $\Gamma_{\textbf{N}}$ is connected with $x_1$ by a path. The following are equivalent:
\begin{enumerate}
\item The algebra $A$ has Gelfand-Kirillov dimension $GK\dim A=1$ and $\dim A_2 =\binom{n}{2}+1$;
\item The graph $\Gamma_{\textbf{N}}$ is formed out of an acyclic tournament $\Gamma_1$ with vertices $V(\Gamma_1) = X=
    V(\Gamma_{\textbf{N}})$ to which a single loop $x_1\longrightarrow x_1$ is added, so $E(\Gamma_{\textbf{N}})=
    E(\Gamma_1)\bigcup \{x_1 \longrightarrow x_1\}$.
    \item There is a (possibly new) enumeration of $X$, $X = \{y_1\cdots, y_n\}$, such that
   \begin{equation}
    \label{eq:normalwords2}
   \textbf{N} =\{y_iy_j\mid 1 \leq i < j\leq n\}\cup \{yy\}
    \end{equation}
for some fixed $y\in X$.  \end{enumerate}
Moreover, suppose $B$ is a monomial algebra generated by $X= \{x_1, \cdots, x_n\}$ with $GK\dim B =1$, and such that $x_1x_1$ is
a normal word for $B$. Then
\[
\dim B_2 \leq \dim A_2=\binom{n}{2} +1.
\]
\end{lem}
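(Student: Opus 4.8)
The plan is to prove the equivalence (1) $\Leftrightarrow$ (2) $\Leftrightarrow$ (3) first, and then deduce the final extremal inequality as a consequence of the characterization together with Lemma~\ref{lem:peter}. For (3) $\Rightarrow$ (2): if $\textbf{N}$ has the form in \eqref{eq:normalwords2}, then $\Gamma_{\textbf{N}}$ has edges $y_i\longrightarrow y_j$ for $i<j$ (an acyclic tournament $\Gamma_1$ by Remark~\ref{rmk: graph1}(2)) plus the single loop $y\longrightarrow y$; I would observe that $y$ must be $y_1$ in this ordering since $y_1y_1$ being the only possible loop forces $y=y_1$ (if $y=y_k$ with $k\geq 2$, the word $y_ky_k$ is normal, but no path from $y_k$ returns to $y_k$ except via the loop, which is consistent; actually the loop can sit at any vertex, so I would instead just keep $y$ general and note that after reindexing so that $y$ comes first among equals we may call it $y_1$ — the cleanest route is to relabel so $y=y_1$, using that the tournament ordering is rigid only up to the position of the loop vertex, so WLOG $y=y_1$). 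For (2) $\Rightarrow$ (3): apply Remark~\ref{rmk: graph1}(2) to the acyclic tournament $\Gamma_1$ to get the enumeration $y_1,\dots,y_n$ with $E(\Gamma_1)=\{y_i\to y_j: i<j\}$, and then the added loop is at some $y$; relabel within the argument so this is recorded as in \eqref{eq:normalwords2}.

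For (2) $\Rightarrow$ (1): with $\Gamma_{\textbf{N}}$ equal to an acyclic tournament plus one loop, count $|E(\Gamma_{\textbf{N}})|=\binom{n}{2}+1$, so $\dim A_2 = |\textbf{N}| = \binom{n}{2}+1$. By Remark~\ref{rmk:growth}(iii), $GK\dim A$ equals the largest number of oriented cycles on a path in $\Gamma_{\textbf{N}}$; the only cycle present is the single loop at $y$, and it can be traversed at most once along any path (a path visits each vertex... actually a path in Ufnarovski's sense may repeat vertices, but repeating the loop means staying at $y$, which still counts the loop as one cycle — here I should note that a path visiting the loop multiple times in succession still corresponds to normal words $\cdots y^k\cdots$, and the relevant count is the number of \emph{distinct} cycles used, which is $1$). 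Hence $GK\dim A = 1$. For (1) $\Rightarrow$ (2): assume $GK\dim A =1$ and $\dim A_2 = \binom n2 +1$. By Remark~\ref{rmk:growth}(ii)–(iii), $\Gamma_{\textbf{N}}$ has no two intersecting cycles and the maximal number of cycles on a path is $1$; since $x_1x_1\in\textbf{N}$ there is a loop at $x_1$, giving one cycle through $x_1$. If $\Gamma_{\textbf{N}}$ contained any cycle not passing through $x_1$, then since (by hypothesis) every vertex is joined to $x_1$ by a path, one could build a path using both that cycle and the loop at $x_1$, giving two cycles on a path — contradiction. So every cycle passes through $x_1$; combined with "no two cycles intersect" this forces the loop at $x_1$ to be the unique cycle, so the subgraph $\Gamma_1 := \Gamma_{\textbf{N}}\setminus\{x_1\to x_1\}$ is acyclic. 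Now $|E(\Gamma_1)| = |E(\Gamma_{\textbf{N}})|-1 = \dim A_2 - 1 = \binom n2$, and an acyclic oriented graph on $n$ vertices with $\binom n2$ edges is an acyclic tournament by Remark~\ref{rmk: graph1}(1). (One must check $\Gamma_1$ is \emph{oriented}, i.e. has no bidirected pair $x_i\leftrightarrows x_j$: such a pair is a $2$-cycle, excluded since all cycles pass through $x_1$ and this one does not — or it passes through $x_1$, i.e. $\{x_i,x_j\}\ni x_1$, but a $2$-cycle at $x_1$ still gives a second cycle besides the loop, contradicting uniqueness.) This establishes (2).

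Finally, for the extremal statement: let $B = \k\langle X\rangle/(\textbf{W}_B)$ be monomial with $GK\dim B = 1$ and $x_1x_1$ normal for $B$. Let $\textbf{N}_B = \cN_2(B)$ and $\Gamma_B := \Gamma_{\textbf{N}_B}$. By Remark~\ref{rmk:growth}(ii), $\Gamma_B$ has no two intersecting cycles; in particular the loop at $x_1$ shares no vertex with any other cycle, and moreover any two cycles in $\Gamma_B$ are vertex-disjoint. Delete from $\Gamma_B$ one edge from each cycle to obtain an acyclic subgraph $\Gamma_B'$ on the same $n$ vertices; by Lemma~\ref{lem:peter}, $\Gamma_B'$ is a subgraph of an acyclic tournament, so $|E(\Gamma_B')| \leq \binom n2$. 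It remains to bound the number of deleted edges by $1$, i.e. to see $\Gamma_B$ has at most one cycle: if there were two (necessarily vertex-disjoint) cycles $C_1, C_2$, then a path $C_1 \to \cdots \to C_2$ would need a connecting route, which need not exist in general — so instead I bound differently. The cleaner bound: $\dim B_2 = |E(\Gamma_B)| = |E(\Gamma_B')| + (\text{number of cycles}) \leq \binom n2 + c$ where $c$ is the number of (vertex-disjoint) cycles; and $GK\dim B = 1$ with intersecting-cycle freedom does not by itself bound $c$. The fix is to note that $\dim B_2 \le \binom{n}{2}+1$ is what we want and the loop at $x_1$ is one cycle; any further cycle $C$ of length $\ell\geq 1$ uses $\ell$ distinct vertices none equal to $x_1$, and contributes $\ell$ edges but "saves" only by being removable — here I would instead argue: among all cycles pick a maximal vertex-disjoint family; deleting one edge per cycle gives acyclic $\Gamma_B'$ with $\le\binom n2$ edges, and since the loop at $x_1$ is one such cycle while a path realizing $GK\dim$ can traverse only one cycle, there is effectively at most one "independent" cycle reachable in a path — the honest statement needing care is that $|E(\Gamma_B)|\le \binom n2 +1$, which I will derive by the sharper observation that if two disjoint cycles existed, $GK\dim B \ge 1$ is fine but Ufnarovski's criterion Remark~\ref{rmk:growth}(iii) would still give $GK\dim=1$ unless a path hits both; I will therefore complete this step by a direct counting argument on $\textbf{W}_B = X^2\setminus \textbf{N}_B$ rather than via cycles. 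The main obstacle is precisely this last step — controlling $|E(\Gamma_B)|$ for an \emph{arbitrary} $GK\dim 1$ monomial algebra, where multiple vertex-disjoint cycles are a priori allowed; I expect to resolve it by showing that the presence of $x_1x_1$ normal plus $GK\dim=1$ forces, after passing to the strongly connected components, a linear "path of components" structure in which at most the component of $x_1$ carries a cycle, yielding $\dim B_2\le\binom n2+1$.
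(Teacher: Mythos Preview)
Your argument for the equivalence $(1)\Leftrightarrow(2)\Leftrightarrow(3)$ follows essentially the same route as the paper: for $(1)\Rightarrow(2)$ you both combine the standing connectivity hypothesis with $GK\dim=1$ (Remark~\ref{rmk:growth}) to conclude that the loop at $x_1$ is the \emph{only} cycle of $\Gamma_{\textbf{N}}$, delete it, and recognise the resulting acyclic directed graph with $\binom{n}{2}$ edges as a tournament via Remark~\ref{rmk: graph1}(1); the remaining implications are handled by the same direct edge count together with Remark~\ref{rmk:growth}(iii). Your parenthetical worries about where the loop $yy$ sits relative to the tournament ordering in part (3) are harmless --- the statement allows $y$ to be any fixed vertex, and the paper does not discuss this point either.

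Where your proposal stalls --- the ``Moreover'' inequality $\dim B_2\le\binom{n}{2}+1$ for an arbitrary monomial algebra $B$ with $GK\dim B=1$ and $x_1x_1$ normal --- the paper's own proof \emph{does not treat this clause at all}: it stops immediately after $(3)\Rightarrow(1)$. So you are not overlooking an argument that is present in the paper. Your diagnosis is moreover correct that, absent the connectivity hypothesis, $\Gamma_{\textbf{N}_B}$ may contain several vertex-disjoint cycles, so removing the single loop at $x_1$ need not leave an acyclic graph and Lemma~\ref{lem:peter} alone does not finish. One way to close the gap (not in the paper) is a direct count: if the pairwise disjoint cycles of $\Gamma_{\textbf{N}_B}$ have lengths $\ell_1,\dots,\ell_c$ with $L=\sum\ell_i$, then there are exactly $L$ edges internal to cycles, no edges between distinct cycles (such an edge would put two cycles on one path), at most $\ell_i$ edges --- all in one direction --- between any non-cycle vertex and a given cycle $C_i$, and the induced subgraph on the $n-L$ non-cycle vertices is acyclic. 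Summing,
\[
|E(\Gamma_{\textbf{N}_B})|\;\le\;L+(n-L)L+\binom{n-L}{2}\;=\;\binom{n}{2}+\frac{L(3-L)}{2}\;\le\;\binom{n}{2}+1,
\]
which is the missing bound.
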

\begin{proof}
Let $\Gamma_1$  be the subgraph of $\Gamma_{\textbf{N}}$ obtained by `erasing' the edge $x_1 \longrightarrow x_1$,
so $E(\Gamma_{\textbf{N}})= E(\Gamma_1)\bigcup \{x_1 \longrightarrow x_1\}$, and $|E(\Gamma_1)|= |E(\Gamma_{\textbf{N}})|- 1$ .
There are equalities
\begin{equation}
\label{eq:numberedges1}
\dim A_2 = |\textbf{N}| = |E(\Gamma_{\textbf{N}})|.
\end{equation}

(1) $\Longrightarrow$ (2). Assume $A$ satisfies (1). Then $GK\dim A=1$ implies that the graph $\Gamma_{\textbf{N}}$  does not
have two cycles connected with a path, or passing through a vertex, see Remark~\ref{rmk:growth}. Moreover, the assumption that
every vertex $x_j$  is connected with $x_1$ by a path implies that the only cycle of $\Gamma_{\textbf{N}}$ is the loop
$x_1\longrightarrow x_1.$ It follows that the subgraph $\Gamma_1$ is an  acyclic directed graph  with exactly $\binom{n}{2}$
edges. Now Remark~\ref{rmk: graph1} (1) implies that $\Gamma_1$ is an acyclic tournament and therefore the graph
$\Gamma_{\textbf{N}}$ has the desired shape.

(2) $\Longrightarrow$ (3). Follows from Remark~\ref{rmk: graph1}, part (2).

(3) $\Longrightarrow$ (1). Assume that after a possible relabeling of the vertices  $X = \{y_1\cdots, y_n\}$ of
$\Gamma_{\textbf{N}}$, the set of edges satisfies (\ref{eq:normalwords2}).
Clearly, $\Gamma_{\textbf{N}}$ has exactly  $\binom{n}{2} +1$ edges, hence $\dim A_2= \binom{n}{2} +1$. Moreover,
$\Gamma_{\textbf{N}}$ contains exactly one cycle  and therefore, by Remark~\ref{rmk:growth}, $GK\dim A =1$.
\end{proof}

Observe that part (1) of the Lemma also holds  if the graph $\Gamma_{\textbf{N}}$ is formed out of an acyclic tournament
$\Gamma_1$ with vertices $V(\Gamma_1) = X= V(\Gamma_{\textbf{N}})$ to which is added an edge $x\longrightarrow z,$ for some $x, z
\in X, x\neq z$,  so  $E(\Gamma_{\textbf{N}})= E(\Gamma_1)\bigcup \{x \longrightarrow z\}$.

\begin{thm}
\label{thm:gldiminf}
If $A$ is a PBW algebra with a set of PBW-generators $x_1, \cdots, x_n$, $n \geq 2$, and $GK\dim A =m < n$, then
$A$ has infinite global dimension, $\gldim A = \infty$.
\end{thm}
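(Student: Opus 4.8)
The plan is to work with the graph of normal words $\Gamma_{\textbf{N}}$ of $A$ and to combine the growth criterion of Remark~\ref{rmk:growth} with the global dimension criterion of Corollary~\ref{cor:gldim}. By the discussion at the start of Section~\ref{sec:graphs}, I may replace $A$ by its associated monomial algebra $A_{\textbf{W}}$, since the two share the same obstruction set $\textbf{W}$, the same normal words, the same growth and the same global dimension; so from now on $A=\k\asX/(\textbf{W})$ is monomial, with graph of normal words $\Gamma_{\textbf{N}}$ on the $n$ vertices $x_1,\dots,x_n$ and dual graph of obstructions $\Gamma_{\textbf{W}}$.

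The key counting step is this: by Remark~\ref{rmk:growth}(iii), $GK\dim A=m$ forces $\Gamma_{\textbf{N}}$ to have no two intersecting cycles, and the number of cycles appearing along any single path is at most $m$. I want to bound $|E(\Gamma_{\textbf{N}})|=|\textbf{N}|=\dim A_2$ from above in terms of $m$ and $n$. Since the cycles of $\Gamma_{\textbf{N}}$ are pairwise vertex-disjoint (two cycles sharing a vertex would be ``intersecting''), contracting each cycle to a point turns $\Gamma_{\textbf{N}}$ into a directed graph whose only cycles have been removed; an acyclic directed graph on $k$ vertices has at most $\binom{k}{2}$ edges (Remark~\ref{rmk: graph1}(1) together with Lemma~\ref{lem:peter}). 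A careful bookkeeping of how many edges can be added back when we expand the contracted vertices into their cycles — each cycle of length $\ell$ contributes $\ell$ internal edges but ``costs'' $\ell-1$ in the vertex count — shows that $\dim A_2=|E(\Gamma_{\textbf{N}})|$ is strictly less than $\binom{n}{2}+1$ once $m<n$; in fact one gets a bound like $\dim A_2\le \binom{n}{2}+m-(n-1)$ or similar, which is $<\binom{n}{2}+1$ precisely when $m<n$. Equivalently, the graph of obstructions satisfies $|E(\Gamma_{\textbf{W}})|=|X^2|-|\textbf{N}|=n^2-\dim A_2 > n^2-\binom{n}{2}-1=\binom{n}{2}-1$.

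Now I invoke Corollary~\ref{cor:gldim}: $\gldim A=\infty$ as soon as $\Gamma_{\textbf{W}}$ contains a cycle. The graph $\Gamma_{\textbf{W}}$ has $n$ vertices; if it were acyclic as an oriented graph it would be a subgraph of an acyclic tournament (Lemma~\ref{lem:peter}), hence have at most $\binom{n}{2}$ edges, and if moreover it had a bidirected edge $x\to y$, $y\to x$ that is already a $2$-cycle. So an acyclic $\Gamma_{\textbf{W}}$ has at most $\binom{n}{2}$ edges, whereas we have just shown it has more than $\binom{n}{2}-1$ edges — this only rules out $|E(\Gamma_{\textbf{W}})|=\binom{n}{2}$, so I need the sharper bound $|E(\Gamma_{\textbf{W}})|>\binom{n}{2}$, i.e. $\dim A_2<n^2-\binom{n}{2}=\binom{n+1}{2}$. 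I expect the cycle-contraction count of the previous paragraph to deliver exactly this: when $m<n$ there is at least one cycle in $\Gamma_{\textbf{N}}$, the disjoint-cycle structure strictly limits how many of the $n^2$ possible length-two words can be normal, and the deficit pushes $|E(\Gamma_{\textbf{W}})|$ strictly above $\binom{n}{2}$. Then $\Gamma_{\textbf{W}}$ cannot be an acyclic oriented graph, so it has a cycle, and Corollary~\ref{cor:gldim} gives $\gldim A=\infty$.

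The main obstacle is the precise combinatorial estimate relating $\dim A_2$ to $m=GK\dim A$ and $n$: turning ``no two intersecting cycles and at most $m$ cycles per path'' into a clean strict inequality $\dim A_2\le\binom{n}{2}-(n-1)+m<\binom{n+1}{2}$ for $m<n$. This is where one must argue carefully with the vertex-disjoint cycle decomposition of $\Gamma_{\textbf{N}}$ — choosing a set of disjoint cycles $C_1,\dots,C_t$ of lengths $\ell_1,\dots,\ell_t$ with $t\le m$, noting $\sum(\ell_i-1)\le n-1$ (they cannot use all vertices or there would be no room, and the connectivity-to-$x_1$ type hypotheses used in Lemma~\ref{lem:max_graphgkdim1} are replaced here by a direct argument), and bounding the edges between and inside the cycles by the acyclic-tournament bound after contraction. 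Once that inequality is in hand, the rest is the two short graph-theoretic invocations above. An alternative, cleaner route for the obstacle: prove directly that if $\Gamma_{\textbf{W}}$ were acyclic then $\Gamma_{\textbf{N}}$ would contain, for each vertex, a spanning-out acyclic tournament structure forcing a path through $n$ distinct cycles — contradicting $m<n$ — but the edge-counting version seems most robust and is the one I would write up.
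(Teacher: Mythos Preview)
Your main edge-counting route has a genuine gap. The specific bound you write down, $\dim A_2 \le \binom{n}{2}+m-(n-1)$, is false: for $n=3$, take $\Gamma_{\textbf{N}}$ with loops at all three vertices and non-loop edges $x_1\to x_3$, $x_2\to x_3$; then $GK\dim A=2$ but $\dim A_2=5>3=\binom{3}{2}+2-2$. The weaker inequality you actually need, $\dim A_2<\binom{n+1}{2}$ (equivalently $|E(\Gamma_{\textbf W})|>\binom{n}{2}$), happens to be true, but your cycle-contraction sketch does not establish it: contracting vertex-disjoint cycles to points loses the multiplicity of edges between a cycle and the rest of the graph, and the bookkeeping you indicate does not recover a sharp enough bound. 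So the argument as written does not close.

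The paper's proof is essentially the ``alternative, cleaner route'' you mention only in your last sentence, and it avoids any global edge count. It is a two-line case split. Case~(a): some vertex $x_i$ has no loop in $\Gamma_{\textbf N}$; then $x_i\to x_i$ is a loop in $\Gamma_{\textbf W}$, and Corollary~\ref{cor:gldim} finishes. Case~(b): every vertex has a loop in $\Gamma_{\textbf N}$; then polynomial growth forces $\Gamma_{\textbf N}$ to have no other cycles, so after deleting loops one has an acyclic oriented graph on $n$ vertices. If every unordered pair were joined in $\Gamma_{\textbf N}$, this loop-free part would be an acyclic tournament, hence contain a Hamiltonian path, and the corresponding path in $\Gamma_{\textbf N}$ would pass through all $n$ loops, giving $GK\dim A\ge n$, a contradiction. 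Hence some pair $x,z$ is unconnected in $\Gamma_{\textbf N}$, so $x\to z\to x$ is a $2$-cycle in $\Gamma_{\textbf W}$. Your instinct at the end was right; the counting detour was unnecessary.
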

\begin{proof}
Consider the graph $\Gamma_{\textbf{N}}$  of normal words. Two cases are possible:

(a) There exists a vertex $x_i\in X$ without a loop $x_i \longrightarrow x_i$ in $\Gamma_{\textbf{N}}$. Then the graph of
obstructions $\Gamma_{\textbf{W}}$
contains the loop $x_i \longrightarrow x_i$ , and therefore, by Corollary~\ref{cor:gldim},  $\gldim A = \infty.$

(b)
The graph $\Gamma_{\textbf{N}}$ contains $n$ loops $x_i \longrightarrow x_i$, $1 \leq i \leq n$, then by Remark~\ref{rmk:growth},
$\Gamma_{\textbf{N}}$ does not have additional cycles (otherwise $A$ would have exponential growth). We shall prove  that there
are two vertices $x, y \in X, x \neq y$ which are not connected with an edge.  Assume on the contrary, that every two vertices
are connected with an edge in $E(\Gamma_{\textbf{N}})$. Consider the subgraph graph $\Gamma_1$ of  $\Gamma_{\textbf{N}}$ obtained
by `erasing' all loops, so
$\Gamma_1$ has set of edges $E(\Gamma_1) = E(\Gamma_{\textbf{N}}) \setminus \{x_i \longrightarrow x_i\mid 1 \leq i \leq n\}$. By
our assumption every two vertices of $\Gamma_1$ are connected with an edge and therefore $\Gamma_1$  is an acyclic tournament of
order $n$.
Then by Remark~\ref{rmk: graph1},
 the set of its vertices $V = V (\Gamma_1)= X$
can be labeled $V = \{y_1, y_2, \cdots , y_n \}$, so that the set of edges is
\begin{equation}
E(\Gamma_1) = \{y_i \longrightarrow y_j \mid  1 \leq i < j \leq n\}.
\end{equation}
This implies that the graph $\Gamma_{\textbf{N}}$ contains a  path with $n$-loops.
\[\includegraphics{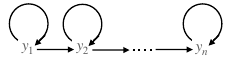}\]
It follows from  Remark~\ref{rmk:growth}  that $GK\dim A \geq n,$ which contradicts the hypothesis $GK\dim A < n$.
Therefore, there are two vertices $x, z \in X, x \neq z$ which are not connected with an edge in $\Gamma_{\textbf{N}}$, so
 the obstruction graph $\Gamma_{\textbf{W}}$
contains the cycle  $x\longrightarrow z\longrightarrow x$. Corollary~\ref{cor:gldim} then implies that $\gldim A = \infty.$
\end{proof}

\begin{cor}
\label{cor:new}
If $(X,r)$ is a finite left nondegenerate idempotent solution of order $|X|=n \ge 2,$ whose Yang-Baxter-algebra
$\cA= \cA(\textbf{k}, X, r)$ is PBW  then the algebra $\cA$ has infinite global dimension, $\gldim A =\infty.$ In particular,
every finite permutation idempotent solution $(X, r_f)$ has Yang-Baxter algebra $\cA$ with  $\gldim A =\infty.$
\end{cor}
\begin{proof}
By \cite[Proposition 3.11]{Colazzo22}, the YB algebra of every idempotent left nondegenerate solution has Gelfand-Kirillov
dimension
$GK\dim \cA = 1$. Therefore, by Theorem \ref{thm:gldiminf}, $\cA$ has infinite global dimension. The YB-algebra of every finite
permutation idempotent solution $(X, r_f)$ is PBW, so $\gldim A =\infty.$
\end{proof}

The following lemma is about general idempotent quadratic sets $(X,r)$. We do not assume any kind of nondegeneracy, nor that
$(X,r)$ is a solution of YBE.
\begin{lem}
\label{lem:2cancel}
Suppose $(X,r)$ is a quadratic set, where $r^2 = r$, and $\cA= \cA(X, \k , r)$, $S = S(X,r)$ are the corresponding quadratic
algebra and monoid.

 \begin{enumerate}
\item If $(X,r)$ is left (resp. right) nondegenerate then the monoid $S$ has left (resp. right) calcellation on monomials of
    length $2$,
that is for all $x,y, z \in X$ there are implications
\[
xy=xz \;\text{in} \; S\; \Longrightarrow y = z\quad (\text{resp.,}\quad   yx= zx \;\text{in} \; S\Longrightarrow y = z).
\]
\item Assume that $(X,r)$ is left nondegenerate and that an enumeration $X = \{x_1, \cdots, x_n\}$ is fixed and, as usual,
    consider the deg-lex ordering on $\asX$. Then the words  $x_1x_1, x_1x_2, \cdots, x_1x_n$  are  normal and distinct in
    $\cA$, hence $\dim \cA_2 \geq n.$
\end{enumerate}
\end{lem}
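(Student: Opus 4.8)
The plan is to reduce everything to two elementary facts about a quadratic set $(X,r)$ with $r^2=r$. The first is that on $X^2$ equality in the monoid is detected by $r$ alone: by Remark~\ref{orbitsinG}(i), $u=v$ in $S=S(X,r)$ iff $r^p(u)=r^q(v)$ in $X^2$ for some $p,q\ge 0$, and since $r^2=r$ gives $r^p=r$ for every $p\ge 1$, a quick check of the four resulting possibilities ($u=v$, $r(u)=v$, $u=r(v)$, $r(u)=r(v)$) shows that each forces $r(u)=r(v)$ (the converse being immediate). Hence
\[
u=v \text{ in } S \iff r(u)=r(v), \qquad u,v\in X^2 .
\]
The second fact is that the binomial quadratic relations control normality of length-two words: since every relation in $\Re_{\cA}$ is a difference of two monomials lying in a common $S$-class of $X^2$, the decomposition $\k X^2=\bigoplus_{\Ocal}\Span_{\k}\Ocal$ over the $S$-classes is respected by $I=(\Re_{\cA})$, and inside each block $\Span_{\k}\Ocal$ the degree-two part of $I$ is precisely the codimension-one ``sum-zero'' subspace, whose set of leading monomials is exactly the non-minimal members of $\Ocal$. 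Thus a word in $X^2$ is normal modulo $I$ iff it is the $<$-least element of its $S$-class, and $\dim\cA_2$ equals the number of $S$-classes in $X^2$.

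For part (1), write $r(x,w)=({}^xw,x^w)$, so that $\Lcal_x(w)={}^xw$ and $\Rcal_w(x)=x^w$. If $(X,r)$ is left nondegenerate and $xy=xz$ in $S$, the first fact gives $r(x,y)=r(x,z)$, and comparing first components yields $\Lcal_x(y)=\Lcal_x(z)$, so $y=z$ by injectivity of $\Lcal_x$. Symmetrically, if $(X,r)$ is right nondegenerate and $yx=zx$ in $S$, then $r(y,x)=r(z,x)$, and comparing \emph{second} components yields $\Rcal_x(y)=\Rcal_x(z)$, so $y=z$ by injectivity of $\Rcal_x$.

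For part (2), distinctness is immediate from part (1): $x_1x_j=x_1x_k$ in $\cA$, equivalently in $S$, forces $x_j=x_k$ by left cancellation, so the $n$ elements $x_1x_1,\dots,x_1x_n$ of $S_2$ are pairwise distinct, and $\cA_2\cong\Span_{\k}S_2$ gives $\dim\cA_2\ge n$. For normality, by the second fact it suffices to show $x_1x_j$ is the $<$-least element of its $S$-class: the only monomials of $X^2$ strictly below $x_1x_j$ are the $x_1x_\ell$ with $\ell<j$ (the first letter cannot drop below the least generator $x_1$), and $x_1x_\ell=x_1x_j$ in $S$ would force $x_\ell=x_j$ by left cancellation --- impossible.

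The one step deserving care is the equivalence ``$u=v$ in $S$ $\iff$ $r(u)=r(v)$'' on $X^2$: it genuinely uses idempotency through $r^p=r$ and is false for a general map $r$; once it is in hand, both parts are short. As a sanity check, for a permutation idempotent solution $r_f$ one has $r_f(x,y)=r_f(x',y')\iff y=y'$, so the $S$-classes in $X^2$ are the $n$ columns $\{x_1y,\dots,x_ny\}$ and $\dim\cA_2=n$, matching Theorem~\ref{thm:YBalgPermSol}.
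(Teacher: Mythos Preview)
Your proof is correct. Compared with the paper, the main novelty is that you first isolate the clean equivalence
\[
u=v\text{ in }S\ \Longleftrightarrow\ r(u)=r(v)\qquad(u,v\in X^2),
\]
obtained from $r^p=r$ for $p\ge1$ and a check of the four cases $p,q\in\{0,1\}$ in Remark~\ref{orbitsinG}(i). With this in hand, part~(1) reduces to comparing a single component of $r(x,y)=r(x,z)$ (resp.\ $r(y,x)=r(z,x)$). The paper instead argues part~(1) by a direct two-case analysis on the $r$-orbit (either $r(xy)=r(xz)$, or $r(xy)=xz$), reaching the same conclusion ${}^xy={}^xz$ via idempotency. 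Your route is shorter and conceptually cleaner, and the equivalence you prove is a useful standalone fact for idempotent $r$.

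For part~(2) both arguments come down to ``a length-two word is normal iff it is the $<$-least member of its $S$-class'', then use part~(1) to rule out any $x_1x_\ell<x_1x_j$ in the same class. The paper states this criterion in one line (``$xy$ is not normal iff $xy-zt\in I$ with $zt<xy$''); you justify it more explicitly via the block decomposition $\k X^2=\bigoplus_{\Ocal}\Span_\k\Ocal$ and the identification of $I_2\cap\Span_\k\Ocal$ with the sum-zero subspace. That extra care is not needed for the specific claim, but it is correct and makes transparent why the leading monomials of $I_2$ are exactly the non-minimal orbit members.
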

\begin{proof}
(1) Assume that $xy=xz$ holds in $S$ for some $x,y,z \in X$. Therefore, $xy$ and $xz$ belong to the same $r$-orbit in $X^2$. Two
cases are possible. (a) $r(xy) = ab$ and $r(xz) = ab$ hold in $X^2$ for some $a,b \in X$
or (b) $r(xy) = xz$
(the case $r(zt) =xy$ is analogous). In  case (a), there are equalities in $X^2$
\[r(xy) =({}^xy)(x^y) = ab,\quad r(xz) =({}^xz)(x^z) = ab,\]
which implies ${}^xy = {}^xz = a$. It follows from the left nondegeneracy of $r$ that $y= z$, as claimed.
Now assume case (b). Then $r(xy) =({}^xy)(x^y) = xz$ holds in $X^2$, so ${}^xy= x$. But $r$ is idempotent, so $xz = r(xz)=
({}^xz)(x^z)$ holds in $X^2$. It follows that ${}^xy= {}^xz = x$, which by the left nondegeneracy again implies $y=z$.

(2) For a quadratic algebra $A$, a word $xy\in X^2$ is not normal \emph{iff} $xy- zt$ is in the ideal of relations of $A$, where
$zt \in X^2$ and $xy > zt$ in the deg-lex ordering on $\asX$. It is clear that $x_1x_1\in \textbf{N}$. Suppose $x_1x_j$ is not
normal for some $j>1$, then $x_1x_j - ab$ is in the ideal of relations of $\cA$, where $x_1x_j  >ab$. This implies $a= x_1$, and
$b=x_i$ with $1\leq i < j$. Therefore the equality $x_1x_j = x_1x_i$ holds in $\cA$, so it holds also in $S$. But this is
impossible, since $S$ is 2-cancellative on the left by part (1). It follows that all monomials $x_1x_j, 1 \leq j \leq n$ are
normal.
\end{proof}
\begin{thm}
\label{thm:new}
Suppose $(X,r)$ is a left nondegenerate idempotent solution whose YB-algebra
$\cA$ is PBW with a set of PBW generators $X= \{x_1, \cdots, x_n\}$.
\begin{enumerate}
\item
\label{thm:new1}
There are inequalities
\begin{equation}
\label{eq:dimA2}
n\leq \dim \cA_2  \leq \binom{n}{2}+1.
\end{equation}
Equivalently, the reduced Gr\"{o}bner basis for $\cA$ consists of $N$ linearly independent quadratic binomials, where
\[\binom{n+1}{2}+1 \leq  N = |\textbf{W}| \leq n(n-1).\]
\item
\label{thm:new2}
The lower bound in (\ref{eq:dimA2}) is exact. Moreover,
if $\dim \cA_2= n$ then $\dim \cA_d = n$ for all $d \geq 1.$
\end{enumerate}
\end{thm}

\begin{proof}
(\ref{thm:new1}) It is known that the YB algebra of an idempotent left nondegenerate solution has Gelfand-Kirillov dimension
$GK\dim \cA = 1$, see \cite[Proposition 3.11]{Colazzo22}.

It follows from Lemma~\ref{lem:2cancel} (2) that the words  $x_1x_1, x_1x_2, \cdots, x_1x_n$  are normal and distinct in $\cA$,
hence $\dim \cA_2 \geq n.$
We shall prove that $\dim \cA_2  \leq \binom{n}{2}+1$. Recall that $\dim \cA_2$ equals the number of edges
$|E(\Gamma_{\textbf{N}})|$, so we shall find an upper bound for this number.

Observe that the graph $\Gamma_{\textbf{N}}$ has a loop $x_1\longrightarrow x_1$, and every vertex $x_i$  is connected with $x_1$
by an edge.
Then  Remark~\ref{rmk:growth} and $GK\dim \cA= 1$  imply that the graph $\Gamma_{\textbf{N}}$ has no additional cycles. It
follows that the subgraph $\Gamma_1$ obtained from $\Gamma_{\textbf{N}}$ by `erasing' the loop $x_1\longrightarrow x_1$ is an
acyclic directed graph with a set of vertices $V= \{x_1, \cdots, x_n\}.$
Now Lemma~\ref{lem:peter} implies that $\Gamma_1$ is a subgraph of an acyclic tournament $\Gamma_0$ with the same set of
vertices. Therefore
the number of its edges satisfies the inequality
\[|E(\Gamma_1)|\leq |E(\Gamma_0)|=\binom{n}{2}.\]
But the number of edges of $\Gamma_{\textbf{N}}$ is $|E(\Gamma_{\textbf{N}})|= |E(\Gamma_1)|+1$, and therefore
\[\dim A_2 = |E(\Gamma_{\textbf{N}})| \leq \binom{n}{2}+1.\]
This proves part (\ref{thm:new1}).

(\ref{thm:new2}) We have shown that the YB-algebra $\cA$ of a  permutation idempotent solution $(X,r_f)$ of order $n$  is PBW and
$\dim \cA_2 = n$, so the lower bound is exact, i.e., can be attained.

Suppose now that $(X,r)$ is an arbitrary left nondegenerate idempotent solution such that the YB-algebra $\cA$ is PBW with a set
of PBW-generators  $X= \{x_1, \cdots, x_n\}$.
 Then by Lemma~\ref{lem:2cancel}, each of the monomials $x_1x_j, 1 \leq j \leq n$ is normal.
Therefore $\Gamma_{\textbf{N}}$  contains the loop $x_1 \longrightarrow x_1$ and $n-1$ edges
$x_1\longrightarrow x_j, 2 \leq j \leq n$. If $\dim \cA_2 = n,$ then $\Gamma_{\textbf{N}}$ does not have additional edges, so
$E(\Gamma_{\textbf{N}})= \{x_1\longrightarrow x_j \mid 1 \leq j \leq n\}$. It follows that for each $d \geq 1$,  there are
exactly $n$ distinct paths
of length $d$, namely
\[x_1\longrightarrow x_1\longrightarrow\cdots \longrightarrow x_1 \longrightarrow x_j,\]
which correspond to the normal words $x_1^{d}x_j$, $1 \leq j \leq n$, of length $d+1$. Conversely, by Remark~\ref{rmk:growth},
every normal word of length $d+1$ corresponds to a path of length $d$ in $\Gamma_{\textbf{N}}$. Therefore $|\cN_{d+1}|= n = \dim
\cA_{d+1},$ for all $d \geq 1$.
\end{proof}


 We end the section with some open questions.
\begin{que}
\label{que:conjecture}
\begin{enumerate}
Suppose $(X,r)$ is a left nondegenerate idempotent solution on $X= \{x_1, \cdots, x_n\}$ for which the YB-algebra $\cA$ is PBW
with PBW-generators
the elements of $X$ taken with this fixed enumeration.
\item
Is it true that if $\dim \cA_2 = n$ then $(X,r)$ is a permutation idempotent solution?
\item
What is the exact upper bound for $\dim \cA_2$, i.e. the minimal possible number of relations of $\cA$?
\item More generally, is it true that the permutation idempotent solutions of order $|X| =n$ are the only left nondegenerate
    idempotent solutions for which the lower bound in (\ref{eq:dimA2}) is attained?
\end{enumerate}
\end{que}

\section{The zero divisors in $\cA(\k , X, r_f)$ and the left annihilator of $\cA^+$}
\label{seczero}

In this section $(X, r_f)$ is a permutation idempotent solution on $X = \{x_1, \cdots, x_n\}$,  $\cA = \cA(\k , X, r_f)$ is its
Yang-Baxter algebra and $S= S(X,r_f)$ is its YB-monoid.
Denote by $\cA^+$ the direct sum $\cA^+ = \cA_1\oplus \cA_2\oplus \cA_3 \oplus \cdots$. This is the two-sided ideal $(x_1,
\cdots, x_n)$   generated by $x_1, \cdots, x_n$ and is clearly a left $\cA$-module as well as a maximal left ideal, a maximal
right ideal and a maximal two-sided ideal of $\cA$.
We shall see as a part of Theorem~\ref{thm:kerprod} that  every element of $\cA^+$ is a (right) zero divisor in $\cA$ and,
conversely,  every right zero divisor in $\cA$ belongs to $\cA^+$.
Finally we shall describe $\Ann_\cA(\cA^+)$,
the left annihilator of the left $\cA$-module $\cA^+$.



\begin{cor}
\label{cor:NOcancellation}
We use notation and assumptions as above.
\begin{enumerate}
\item
The following are equalities in the YB-monoid $S(X,r_f)$:
\begin{equation}
\label{eq:importanteq}
\begin{array}{l}
ax_j = bx_j = x_1^{d}x_j,\quad\forall\ 1 \leq j \leq n,\quad  a, b \in S(X,r_f)\ {\rm with}\    |a|= |b|= d;
\end{array}
\end{equation}
\item $S(X,r_f)$ is left cancellative;
\item $S(X,r_f)$ is not right cancellative;
\item The algebra $\cA$ is central, that is its center is the field $\k$.
\end{enumerate}
\end{cor}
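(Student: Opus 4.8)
The plan is to read the whole corollary off the explicit structure of the monoid $S=S(X,r_f)$, identified with $(\cN,\bullet)$ via Theorem~\ref{thm:YBalgPermSol} and Remark~\ref{rmk:normalforms}. The single fact I would use throughout is the collapse identity $u x_s = x_1^{|u|} x_s$, valid in $S$ for every word $u\in\asX$ and every generator $x_s\in X$; this is exactly~(\ref{eq:Nor2}). Granting it, part~(1) is immediate: if $a,b\in S$ both have length $d$, I would choose word representatives $a=y_1\cdots y_d$ and $b=z_1\cdots z_d$ over $X$, and apply the collapse identity to $ax_j=y_1\cdots y_d x_j$ and $bx_j=z_1\cdots z_d x_j$ to obtain $ax_j=bx_j=x_1^{d}x_j$ for every $j$.

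For parts~(2) and~(3) I would combine the same identity with the observation that every element of $S$ of length $e\ge1$ is uniquely of the form $x_1^{e-1}x_s$, $s\in\{1,\dots,n\}$. For any $c\in S$ the collapse identity gives $c\cdot(x_1^{e-1}x_s)=x_1^{|c|+e-1}x_s$, so left multiplication merely shifts the length and preserves the index $s$. From $ca=cb$ in $S$ I would first deduce $|a|=|b|$ (the length grading is respected); if this common length is $0$ then $a=b=1$, and if it is $e\ge1$, writing $a=x_1^{e-1}x_s$ and $b=x_1^{e-1}x_{s'}$, the equality $x_1^{|c|+e-1}x_s=x_1^{|c|+e-1}x_{s'}$ forces $s=s'$ since distinct elements of $\cN$ remain distinct in $S$; hence $a=b$, proving left cancellativity. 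For part~(3) I would simply note that $n\ge2$, that $x_2x_1=x_1x_1$ holds in $S$, and that $x_1\neq x_2$ in $S$ because the generators are distinct (Remark~\ref{orbitsinG}); thus $x_1x_1=x_2x_1$ shows right cancellation fails.

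For part~(4) I would first show that the center of the connected graded algebra $\cA$ is graded: if $a=\sum_d a_d$ is central, then comparing degree-$(d+1)$ components in $a x_i=x_i a$ over the generators $x_i$ gives $a_d x_i=x_i a_d$, and as $\cA$ is generated in degree $1$ each $a_d$ is central; so it suffices to prove $\cA_d\cap Z(\cA)=0$ for $d\ge1$. For a central $a=\sum_{q=1}^n\lambda_q x_1^{d-1}x_q\in\cA_d$ I would compute, using $x_qx_1=x_1x_1$, that $ax_1=\bigl(\sum_q\lambda_q\bigr)x_1^{d+1}$, whereas $x_1a=\sum_q\lambda_q x_1^{d}x_q$ is a linear combination of the distinct basis words of $\cN_{d+1}$; equating them and comparing coefficients forces $\lambda_q=0$ for $q\ge2$, so $a=\lambda_1 x_1^{d}$. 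Then, iterating $x_2x_1=x_1x_1$ to get $x_2x_1^{d}=x_1^{d+1}$, we have $x_2a=\lambda_1 x_1^{d+1}$ while $ax_2=\lambda_1 x_1^{d}x_2$, and since $x_1^{d}x_2\neq x_1^{d+1}$ in $\cN_{d+1}$ this forces $\lambda_1=0$; hence $a=0$ and $Z(\cA)=\k$.

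I expect no real obstacle. The only step with genuine content is the degree-by-degree coefficient comparison in part~(4), and that is short once the collapse identity $ux_s=x_1^{|u|}x_s$ is available; everything else is a direct consequence of the normal-form description already obtained in Theorem~\ref{thm:YBalgPermSol} and Remark~\ref{rmk:normalforms}. The one thing to keep straight is the bookkeeping when $d=1$, where $x_1^{d-1}$ is the empty word, but this causes no trouble.
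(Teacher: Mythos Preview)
Your proof is correct. Parts~(1)--(3) follow essentially the same route as the paper: the collapse identity~(\ref{eq:Nor2}) gives~(1), the normal-form description of $S\cong(\cN,\bullet)$ gives left cancellativity, and a single explicit witness kills right cancellativity (you use $x_2x_1=x_1x_1$ with $x_1\ne x_2$; the paper uses $x_1^{d-1}x_2\cdot x_q = x_1^d\cdot x_q$, which is the same phenomenon).

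The genuine difference is in part~(4). The paper does not compute at all: it invokes \cite[Theorem~3.12]{Colazzo22}, which states that for a finite left non\-degenerate idempotent solution the Yang--Baxter algebra has center larger than $\k$ if and only if the monoid $S$ is cancellative; since you have just shown $S$ is not right cancellative, centrality follows in one line. Your argument instead reduces to graded pieces and kills a putative homogeneous central element by comparing coefficients against the normal basis $\cN_{d+1}$, using $x_qx_1=x_1^2$ and $x_2x_1^d=x_1^{d+1}$. Your route is self-contained and entirely elementary, and it makes the structural reason transparent (right multiplication by $x_1$ collapses everything to a single basis vector, so nothing nonconstant can commute with both $x_1$ and $x_2$). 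The paper's route is shorter but imports a general theorem whose proof is not reproduced here. Both are fine; yours is arguably preferable in a paper that otherwise works hard to keep the presentation explicit.
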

\begin{proof}
(1) The equality (\ref{eq:importanteq}) is straightforward from formula (\ref{eq:Nor2}).

(2) Assume that $u.a = u.b$ holds in $S=S(X,r_f)$, for some $a,b,u \in S$. We have to show that $a= b$ holds in $S$.  Without
loss of generality, we can take
$a, b, c \in \cN$. Clearly, $a$ and $b$ have the same length, since $S$ is graded. Then  $a= x_1^d x_i, b = x_1^dx_j$ and $u =
x_1^mx_s$ for some
$1 \leq i,j,s\leq n$ and $0 \leq d,m$.
Hence, by (\ref{eq:importanteq}),
\[u.a = x_1^{m+d}x_i,\quad  u.b =  x_1^{m+d}x_j\]
hold in $S$. This,  together with the equality $u.a = u.b$ in $S$, implies the equality of normal words
$x_1^{m+d}x_i= x_1^{m+d}x_j$, so $i=j$, and hence $a = b$.

(3)  Let $a = x_1^{d},  b =x_1^{d-1}x_2$. Then  $a \neq b$, but by (\ref{eq:importanteq}), one has
 $a.x_q = b.x_q = x_1^dx_q$ for any  $1 \leq q \leq n$. Hence $S$ is not right cancellative.

(4) By \cite[Theorem 3.12]{Colazzo22}, the YB-algebra $\cA(\k,X,r_f)$ is not central \emph{iff} the monoid $S$ is cancellative.
In our case, $S$ is not cancellative, and therefore the center of $\cA$ is the field $\k $.
\end{proof}

\begin{thm}
\label{thm:kerprod}
Let $(X,r_f)$ be a permutation idempotent solution, $X = \{x_1, \cdots, x_n\}$, $\cA=\cA(\k,X,r_f)$ its Yang-Baxter algebra and
$\cA^+ = \cA_1 \oplus \cA_2 \oplus \cdots$.
\begin{enumerate}
\item
If $a,b \in \cA$ and $a, b \neq 0$ with $ab = 0$ then $a, b \in \cA^+$ and we have a presentation
\begin{equation}
\label{eq:M1}
a = \sum_i \alpha_{1i}x_i + \sum_i \alpha_{2i}x_1x_i +\cdots +  \sum_i \alpha_{pi}(x_1)^{p-1}x_i,
    \quad p \geq 1,\ \alpha_{di}\in \k ,\  1\leq i \leq n, 1 \leq d \leq p,
\end{equation}
where  $\sum_i \alpha_{di} = 0$ for all $1 \leq d \leq p$.
\item Conversely, if $a\in \cA$ satisfies (\ref{eq:M1}) then  $ab= 0$ for all $b \in \cA^+$.
\end{enumerate}
\end{thm}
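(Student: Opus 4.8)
The plan rests on the completely explicit description of $\cA$ in its normal $\k$-basis $\cN=\{1\}\cup\{x_1^mx_q\}$ from Theorem~\ref{thm:YBalgPermSol} and Remark~\ref{rmk:normalforms}. The key structural fact is that a product of normal words of positive length ignores everything about the second factor except its last letter: by (\ref{eq:Nor1}), for $k,\ell\geq1$ one has $(x_1^{k-1}x_i)(x_1^{\ell-1}x_j)=x_1^{k+\ell-1}x_j$ in $\cA$, with no dependence on $i$. Hence, writing a homogeneous $u\in\cA_k$ ($k\geq1$) in the basis $\cN_k$ as $u=\sum_i\alpha_i\,x_1^{k-1}x_i$ and setting $\sigma(u):=\sum_i\alpha_i\in\k$, I get the multiplication rule $u\,v=\sigma(u)\,x_1^{k}v$ for every homogeneous $v\in\cA_\ell$ with $\ell\geq1$. (Equivalently: the algebra homomorphism $\pi\colon\cA\to\k[t]$ determined by $x_i\mapsto t$ is well defined, since each relation $x_jx_p-x_1x_p$ maps to $0$; here $\pi(x_1^{k-1}x_i)=t^k$, and $a$ has the shape (\ref{eq:M1}) precisely when $a\in\ker\pi$.) In particular, if $\sigma(u)=0$ then $u\,v=0$ for every homogeneous $v\in\cA^+$, and therefore $u\,w=0$ for all $w\in\cA^+$.

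Part (2) is then immediate: if $a$ is as in (\ref{eq:M1}), then $a=\sum_{d=1}^p a_d$ with $a_d=\sum_i\alpha_{di}\,x_1^{d-1}x_i\in\cA_d$ and $\sigma(a_d)=\sum_i\alpha_{di}=0$ for each $d$, whence $a_d\,b=0$ for every $b\in\cA^+$ and every $d$, so $ab=0$.

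For part (1), assume $a,b\neq0$ and $ab=0$. First I would show $a,b\in\cA^+$. Let $e_0$ be the least degree with $b_{e_0}\neq0$ (it exists as $b\neq0$). If $a_0\neq0$, then every homogeneous component of $ab$ of degree $<e_0$ vanishes while the degree-$e_0$ component equals $a_0b_{e_0}\neq0$, contradicting $ab=0$; hence $a_0=0$, i.e. $a\in\cA^+$. Symmetrically, let $d_0$ be the least degree with $a_{d_0}\neq0$; if $b_0\neq0$ then the degree-$d_0$ component of $ab$ is $a_{d_0}b_0\neq0$, again a contradiction, so $b_0=0$, i.e. $b\in\cA^+$. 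Now write $a=\sum_{d\geq1}a_d$, $b=\sum_{e\geq1}b_e$, keep the least $e_0$ with $b_{e_0}\neq0$, and prove $\sigma(a_k)=0$ for all $k\geq1$ by induction on $k$. Reading $ab=0$ in degree $k+e_0$ gives $0=\sum_{d+e=k+e_0,\ d,e\geq1}\sigma(a_d)\,x_1^{d}b_e$; the terms with $e<e_0$ vanish by minimality of $e_0$, the terms with $d<k$ vanish by the inductive hypothesis, and only $\sigma(a_k)\,x_1^{k}b_{e_0}$ survives. But $x_1^kb_{e_0}=\sum_j\beta_{e_0 j}\,x_1^{k+e_0-1}x_j$ is a non-trivial $\k$-combination of the distinct basis words of $\cN_{k+e_0}$, hence a non-zero element of $\cA_{k+e_0}$, so $\sigma(a_k)=0$. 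Thus every row sum $\sum_i\alpha_{di}$ of $a$ vanishes, which together with $a_0=0$ is exactly the presentation (\ref{eq:M1}) (with $p=\deg a\geq1$).

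I do not anticipate a serious obstacle; the one point to watch is the genuine asymmetry of the statement — in $ab=0$ it is the \emph{left} factor $a$ that is forced into the special shape (\ref{eq:M1}), while $b$ need only lie in $\cA^+$ — which mirrors the fact that in $u\,v=\sigma(u)\,x_1^{\deg u}v$ it is $\sigma$ of the \emph{left} factor that governs vanishing. In particular, although $\pi(a)\pi(b)=0$ in the integral domain $\k[t]$ immediately yields $\pi(a)=0$ or $\pi(b)=0$, this by itself does not locate the special shape on $a$, so the degree-by-degree induction above (or an equivalent minimal-degree case analysis) is what actually does the job.
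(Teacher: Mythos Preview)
Your proof is correct and follows essentially the same route as the paper: both arguments write $a$ and $b$ in the normal $\k$-basis, use the multiplication rule coming from $\Nor(y_1\cdots y_{d-1}x_q)=x_1^{d-1}x_q$, fix the least degree $e_0$ with $b_{e_0}\neq 0$, and then prove $\sum_i\alpha_{di}=0$ by induction on $d$ by reading off the degree-$(d+e_0)$ component of $ab=0$. Your packaging via the linear functional $\sigma$ and the identity $u\,v=\sigma(u)\,x_1^{\deg u}v$ (and the side remark about the homomorphism $\pi\colon\cA\to\k[t]$) is a cleaner way to say the same thing, and your justification that $a_0=b_0=0$ is more explicit than the paper's one-line ``since $\k$ is a field'', but the underlying argument is the same.
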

\begin{proof}
Suppose $a \in \cA$, $a \neq 0$ and assume that there exists  $b \in \cA$, $b \neq 0$, such that $ab=0$. It is clear that each
$a, b \in \cA_1 \oplus \cA_2 \oplus \cA_3 \oplus \cdots$, since $\k $ is a field.
Suppose $b \in A_m \oplus A_{m+1} \oplus \cdots, $  $m\geq 1$, where the first nonzero graded component, $b_m$, of $b$ is
\begin{equation}
\label{eq:gm}
b_m = \sum_j \beta_{mj} (x_1)^{m-1}x_j; \quad (\beta_{m1}, \cdots, \beta_{mn})\neq (0, \cdots, 0).
\end{equation}
In the case when $m=1$ we have simply $x_1^{m-1}=x_1^0= 1$.
The elements $a$ and $b$ have presentations
\[
a= \sum_i \alpha_{1i}x_i + \sum_i \alpha_{2i}x_1x_i +\cdots +  \sum_i \alpha_{pi}(x_1)^{p-1}x_i,\]
where $p \geq 1,\ \alpha_{di}\in \k,\  1\leq i \leq n,\  1 \leq d \leq p$, and
\[b =\sum_j \beta_{mj} (x_1)^{m-1}x_j + \sum_j \beta_{(m+1)j} (x_1)^{m}x_j +\cdots +  \sum_j \beta_{qj}(x_1)^{q-1}x_j,\]
where $1 \leq m \leq q,\ \beta_{dj}\in \k,\ m \leq d \leq q,\ 1\leq j \leq n$.  We shall use induction on $d$ to prove that
\begin{equation}
\label{eq:M2a}
\sum_i \alpha_{di} = 0,\quad\forall\ 1 \leq d \leq p.
 \end{equation}
The equality $a.b= 0$ implies that each graded component $(a.b)_d= 0$. In fact, since $b_k = 0,$ for $k \leq m-1$ the first
graded component of the product is
\begin{equation}
\label{eq:M3}
\begin{array}{ll}
(ab)_{m+1}&= a_1 b_m = (\sum_i \alpha_{1i}x_i)(\sum_j \beta_{mj} (x_1)^{m-1}x_j)\\&\\
         &=\sum_i \alpha_{1i} \sum_j \beta_{mj} x_i.(x_1)^{m-1}x_j)=\sum_i \alpha_{1i} \sum_j \beta_{mj} (x_1^{m}x_j)= 0.
\end{array}
\end{equation}
   For the last equality, we use that $y_1 \cdots y_m x_j = (x_1)^{m}x_j$, for all $y_1, \cdots, y_m \in X$ and all $m \geq 1$,
   see formula (\ref{eq:Nor1}).
 We obtain a linear relation
 \[\sum_i \alpha_{1i} \sum_j \beta_{mj} (x_1^{m}x_j)= 0\]
 for the linearly independent monomials  $x_1^{m}x_j, 1 \leq j \leq n$, and therefore
 \begin{equation}
\label{eq:M4}
(\sum_i \alpha_{1i}) \beta_{mj} = 0,\quad\forall\ 1 \leq j \leq n.
 \end{equation}
 By assumption, $b_m\neq 0$. Hence, there exists a $j, 1 \leq j \leq n,$ such that $\beta_{mj} \neq 0$, so
 (\ref{eq:M4}) implies the desired equality
\begin{equation}
\label{eq:M5}
\sum_i \alpha_{1i}  = 0.
 \end{equation}
 This gives the base for the induction. Next, we take:

 Induction Hypothesis (IH): Assume (\ref{eq:M2a}) holds for $1 \leq d \leq k-1$.
 Consider the $(m+k)$-th component
 \[(a.b)_{m+k}= a_kb_{m}+ a_{k-1}b_{m+1}+ \cdots +a_1 b_{m+k-1}=0.\]
 More precisely, one has
\begin{align*}
 (\sum_i &\alpha_{ki}( x_1)^{k-1}x_i)(\sum_j \beta_{mj} (x_1)^{m-1}x_j)+ \cdots + (\sum_i \alpha_{1i} x_i)(\sum_j
 \beta_{(m+k-1)j} (x_1)^{m+k-2}x_j)\\
  &=(\sum_i \alpha_{ki}\sum_j \beta_{mj}+ \sum_i \alpha_{(k-1)i}\sum_j \beta_{(m+1)j} +\cdots +\sum_i \alpha_{1i}\sum_j
  \beta_{(m+k-1)j})  x_1^{k+m-1}x_j= 0.
 \end{align*}
Then, since for fixed $k$ and $m$,  the monomials $x_1^{k+m-1}x_j$ for $1 \leq j \leq n$ are linearly independent, we have
\begin{equation}
\label{eq:M6}
(\sum_i \alpha_{ki})\beta_{mj}+ (\sum_i \alpha_{(k-1)i})\beta_{(m+1)j} +\cdots +(\sum_i \alpha_{1i})\beta_{(m+k-1)j}= 0
\end{equation}
for each fixed $1\leq j \leq n$. By the IH, we can assume that
\[
\sum_i \alpha_{1i} = 0,\quad  \cdots,\quad  \sum_i \alpha_{(k-1)i} =0,
\]
so that (\ref{eq:M6}) implies
\[
(\sum_i \alpha_{ki})\beta_{mj}= 0,\quad\forall\ 1 \leq j\leq n.
\]
But we know that there exists a $j, 1 \leq j \leq n,$ such that $\beta_{mj} \neq 0$, see (\ref{eq:gm}). Hence,  $\sum_i
\alpha_{ki}= 0$ as desired.
This proves the first part of the theorem. Direct computation shows that, conversely, if $a\in \cA$ satisfies (\ref{eq:M1}) then
$a. \cA =0$.
\end{proof}

Recall that the left annihilator of a nonempty subset set $S\subseteq A$ of an algebra $A$ is defined as
\[\Ann_A(S) = \{a \in A \mid  ab = 0, \ \forall\ b \in S \}.\]
Clearly, the left annihilator  is a left ideal of $A$.
The right annihilator $\Ann^R_A(S)$ is defined analogously.
It is obvious that either annihilator of a unital algebra is the zero ideal $\{0\}$. It follows from Theorem~\ref{thm:kerprod}
that the right annihilator of $\cA^+$ is also the zero ideal,
$\Ann^R_\cA(\cA^+) = \{0\}$.

\begin{pro}
\label{pro:annihilator}
In the setting of Theorem~\ref{thm:kerprod}.
Let $(X,r_f)$ be a permutation idempotent solution, $X = \{x_1, \cdots, x_n\}$, $\cA=\cA(\k,X,r_f)$ its Yang-Baxter algebra and
$\cA^+ = \cA_1 \oplus \cA_2 \oplus \cdots$.
The left annihilator $\Ann_{\cA}(\cA^+)$ is the left ideal generated by the elements $(x_i-x_{i+1}), \; 1 \leq i \leq n-1.$
Moreover, it is a free left module of rank $n-1$ over the commutative polynomial ring $\k [x_1]$ with a free left basis
$\{x_i-x_{i+1}, \; 1 \leq i \leq n-1\},$
\begin{equation}
\label{eq:Ann}
\Ann_{\cA}(\cA^+)= \bigoplus_{1 \leq i\leq n-1} \k [x_1](x_i-x_{i+1}).
\end{equation}
\end{pro}
\begin{proof}
We let $\cA=\cA(\k,X,r_f)$ and observe first that
\begin{equation}
\label{eq:f1}
a= \sum_{i=1}^n \alpha_i x_i;\quad  \sum_i \alpha_i = 0,\quad \alpha_i \in \k , 1 \leq i \leq n
\end{equation}
if and only if
\begin{equation}
\label{eq:f2}
a= \sum_{j=1}^{n-1} \beta_j (x_j - x_{j+1});\quad  \beta_j  \in \k , 1 \leq j \leq n-1.
\end{equation}
More specifically, given $a$, each of the presentations (\ref{eq:f1}) and (\ref{eq:f2}) determines uniquely the second
presentation via the formulae
\begin{equation}
\label{eq:f3}
\beta_1 = \alpha_1, \;\;
\beta_k =\alpha_k + \alpha_{k-1} +\cdots + \alpha_2- \alpha_1,\;\; 2\leq k \leq n-2, \;\; \beta_{n-1}= -\alpha_n.
\end{equation}
It is clear that each of the elements $x_i-x_{i-1}, 1 \leq i \leq n-1$  is in the left annihilator
 $\Ann_{\cA}(\cA^+)$, and therefore, the left ideal $\sum_{i = 1}^n \cA(x_i-x_{i+1}) \subseteq \Ann_{\cA}(\cA^+)$.
 It follows from Theorem~\ref{thm:kerprod} that the graded components of each $a\in \Ann_{\cA}(\cA^+)$
 satisfy
 \[a_d= \sum_i \alpha_{di}x_1^{d-1}x_i=(x_1^{d-1})\sum_i \alpha_{di}x_i=(x_1^{d-1})\sum_{j=1}^{n-1} \beta_{dj}(x_j-x_{j+1}),\]
 where $\sum_{i=1}^n \alpha_{di}=0$ and the coefficients $\beta_{dj}, 1 \leq j\leq n-1$ are expressed via $\alpha_{di}, 1 \leq i
 \leq n$ using the formulae (\ref{eq:f3}). Hence, there is an equality of ideals
 \[\Ann_{\cA}(\cA^+) = \sum_{i=1}^{n-1} \cA(x_i-x_{i+1}).\]
 We know that $\cA$ is a free left module with basis $1, x_2, \cdots, x_n$  over the polynomial algebra $\k [x_1]$, so
 \begin{equation}
 \label{eq:A}
 \cA = \k [x_1]\oplus \k [x_1] x_2 \oplus \cdots \oplus \k [x_1] x_n.
 \end{equation}
 Keeping in mind that
 \[
 x_i(x_j-x_{j+1}) = x_1(x_j-x_{j+1})
 \]
 holds in $\cA$ for all $1 \leq i \leq n, 1 \leq j \leq n-1$,
 we obtain that
 \[
 \Ann_{\cA}(\cA^+) = \sum_{j=1}^{n-1} \k [x_1] (x_j-x_{j+1}).
 \]
 To  prove that this is a direct sum, one uses (\ref{eq:A}).
 Assume that
 \[\sum _{j = 1}^{n-1}a_j(x_j-x_{j+1}) = 0,\quad a_j  \in \k [x_1],\quad 1 \leq j \leq n-1.  \]
 Then
 \[a_1x_1 + (a_2-a_1)x_2+ (a_3-a_2)x_3 +\cdots +(a_{n-1}-a_{n-2})x_{n-1} - a_{n-1}x_n = 0,\]
 which is a relation for the left basis  of the free left $\k [x_1]$-module $\cA$.
 It follows that $a_1 =a_2 =\cdots = a_{n-1}= 0.$
\end{proof}

Next, a result in \cite[Corollary 7.4]{Colazzo23} is that for an arbitrary  finite left
nondegenerate braided set with YB-algebra $\cA$,  the Jacobson radical ${\rm Jac}(\cA)$ coincides with $B(\cA)$, the lower
nilradical
of $\cA$. We now recover a similar result but by a different route
 as an application of properties of $\cA$ and $\cA^+$ above
and some classical results from ring theory.

\begin{cor} cf.\cite{Colazzo23}
\label{pro:annihilator-Jacobson}
Let $(X,r_f)$ be a permutation idempotent solution, $X = \{x_1, \cdots, x_n\}$, $\cA=\cA(\k,X,r_f)$ its Yang-Baxter algebra and
$\cA^+ = \cA_1 \oplus \cA_2 \oplus \cdots$.
The Jacobson radical ${\rm Jac}(\cA)$ of $\cA$ is nilpotent and coincides with $\Ann_{\cA}(\cA^+)$.
\end{cor}
\begin{proof}
We shall give a direct proof using the properties of $\Ann_{\cA}(\cA^+)$ and the Braun-Kemer-Razmyslov
 Theorem\cite{BelovRowen20}.
Recall that the \emph{upper nilradical} ${\rm Nil}(R)$ of a unital ring $R$ is defined as the ideal generated by all nil ideals
of the ring, and is itself a nil ideal. The \emph{Jacobson radical} ${\rm Jac}(R)$ of a unital ring $R$ can be defined as the
unique left ideal of $R$ maximal with the property that every element $r \in {\rm Jac}(R)$ is left quasiregular (or,
equivalently, right quasiregular), i.e., $1 -r$ is a unit of $R$. It is well known that ${\rm Nil}(R) \subseteq {\rm Jac}(R)$ for
any unital ring $R$.

 We shall prove first that $\Ann_{\cA}(\cA^+)$ coincides with the upper nil radical of $\cA$.
Indeed, every $a\in \Ann_{\cA}(\cA^+)$ satisfies $a^2 = 0$, therefore $\Ann_{\cA}(\cA^+)$ is a nil ideal.
  Moreover, every nil element of $\cA$ is in $\Ann_{\cA}(\cA^+)$, for if $g\in \cA$ satisfies $g^{m-1}\neq 0, g^m=0$, where $m
  \geq 2$,
 then $g, g^{m-1} \in \cA^+$ and by Theorem~\ref{thm:kerprod} the equality $g.(g^{m-1})= 0,$ together with $g^{m-1}\neq 0$ imply
 that $g \in \Ann_{\cA}(\cA^+)$. In particular, $\Ann_{\cA}(\cA^+)$ contains every nilpotent ideal $I$ of $\cA$. It follows that
 \[\Ann_{\cA}(\cA^+) = {\rm Nil} (\cA).\]

The Braun-Kemer-Razmyslov Theorem states that the Jacobson radical of any affine (i.e., finitely generated) PI algebra over a
field
$\k$ is nilpotent\cite[Theorem 1.1]{BelovRowen20}. But $\cA$ is affine and PI, and therefore ${\rm Jac}(\cA)$ is nilpotent. It
follows that
${\rm Jac}(\cA) \subseteq \Ann_{\cA}(\cA^+) = {\rm Nil}(\cA)$, which together with the well-known inclusion  ${\rm Jac}(\cA)
\supseteq {\rm Nil}(\cA)$ implies  ${\rm Jac}(\cA) = \Ann_{\cA}(\cA^+) = {\rm Nil}(\cA)$.
\end{proof}

\section{$d$-Veronese solutions, subalgebras and morphisms for permutation idempotent solutions}
\label{sec:merged}
In this section, we first introduce certain `$d$-Veronese solutions' associated with an arbitrary  braided set $(X,r)$. We find
these explicitly in the permutation idempotent case  and then use them to construct the $d$-Veronese subalgebras and Veronese
morphisms for the associated Yang-Baxter algebras $\cA(\k,X,r_f)$. Theorem~\ref{thm:veronesealg} presents these $d$-Veronese
subalgebras in terms of generators and quadratic relations and shows that they are all isomorphic to the original Yang-Baxter
algebra. The general strategy here follows the lines of \cite{GI_Veronese} for $d$-Veronese subalgebras and $d$-Veronese
morphisms $v_{n,d}$ for the Yang-Baxter algebras of various finite braided sets $(X,r)$. However, due to great difference between
the properties
of permutation idempotent solutions compared to the cases in  \cite[ Thm.~4.12, Thm.  5.4]{GI_Veronese}, our results here are
significantly different.
\subsection{Braided monoids and the $d$-Veronese solution}
\label{secmon}
Here, we introduce new solutions (braided sets) associated naturally with a given braided set $(X,r)$ and its braided monoid
$S(X,r)$. Matched pairs of monoids, M3-monoids and braided monoids in a general
setting were studied in \cite{GIM08}, where the interested reader can find the
necessary definitions and
the original results. Here we extract only the following facts from \cite[Thm. 3.6, Thm. 3.14]{GIM08} and their proofs.

Let  $(X,r)$ be a braided set and $S=S(X,r)$ its Yang-Baxter monoid.
Then
\begin{enumerate}
\item
The left and
right actions
$
{}^{(\;\;)}{\circ}: X\times X  \longrightarrow
 X $ and $\circ^{(\;\;)}: X
\times X \longrightarrow  X
$
defined via $r$ can be extended in a unique way to left and
right actions
\[{}^{(\;\;)}{\circ}: S\times S  \longrightarrow
 S,\; \; (a, b) \mapsto  {}^ab,\quad  \circ^{(\;\;)}: S
\times S \longrightarrow  S, \;\; (a, b) \mapsto  a^b
\]
making $S$ a \emph{graded {\bf M3}-monoid}. In particular,
\begin{equation}
\label{eq:braided_monoid}
\begin{array}{lclc}
{ML0 :}\quad & {}^a1=1,\quad  {}^1u=u,\quad &{MR0:} \quad &1^u=1,\quad a^1=a,
\\
 {ML1:}\quad& {}^{(ab)}u={}^a{({}^bu)},\quad& {MR1:}\quad  & a^{(uv)}=(a^u)^v,
 \\
{ML2:}\quad & {}^a{(u.v)}=({}^au)({}^{a^u}v),\quad &{MR2:}\quad &
(a.b)^u=(a^{{}^bu})(b^u),\\
{M3:}\quad &{}^uvu^v=uv& &
\end{array}
\end{equation}
hold in $S$ for all $a, b, u, v \in S$. These actions define a  map \[r_S: S\times S
\longrightarrow S\times S, \quad  r_S(u, v) := ({}^uv, u^v)\]
which obeys the Yang-Baxter equation, so $(S, r_S)$ is \emph{a braided
monoid}.
In particular,
$(S, r_S)$
is a set-theoretic solution of YBE, and the associated
map $r_S$ restricts to $r$.  Here, $r_S$ is a bijective map \emph{iff} $r$ is a bijection.
\item $(S,r_S)$ is \emph{a graded braided
monoid},  that is the actions agree with the grading (by length) of $S$:
\begin{equation}
\label{eq:braided_monoid2}
|{}^au|= |u|= |u^a|,\quad  \forall   \; a,u \in S.
\end{equation}
\item $(S, r_S)$ is left (resp. right) non-degenerate  \emph{iff} $(X,r)$ is left (resp. right) non-degenerate.
\end{enumerate}
In part  (2), we use the grading $S = \bigsqcup_{d\in\N_0}  S_{d}$  in (\ref{eq:Sgraded}). If we write  $\cA= \cA(\k , X, r)$ for
the associated YB-algebra, this is isomorphic to the monoid algebra $\k S$  and inherits the natural grading
$\cA = \k \oplus \cA_1 \bigoplus \cA_2\oplus\cdots,$  where $\cA_d \simeq \k S_d$ as vector spaces.
Each of the graded components $S_d, \; d \geq 1$, is $r_S$-invariant and we can consider
the restriction
$r_d = (r_S)_{|S_d\times S_d}$, where
$r_d: S_d\times S_d\longrightarrow S_d\times S_d$.

\begin{cor}
\label{dVeron_monoid}
Let $(X,r)$ be a braided set. Then for every positive integer $d \geq 1$, $(S_d, r_d)$ is a braided
set. Moreover, if $(X,r)$ is of finite order $n$, then $(S_d, r_d)$ is of order
\begin{equation}
\label{eq:fixedwords2}
   |S_d|  = |\cN_d| = \dim \cA_d.
\end{equation}
\end{cor}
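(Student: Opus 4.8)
The statement to prove (Corollary~\ref{dVeron_monoid}) is a direct consequence of the structural facts about braided monoids quoted just above it, so the plan is to assemble those pieces and then read off the cardinality formula. First I would observe that, by item (2) in the list preceding the corollary, the braiding $r_S:S\times S\to S\times S$ respects the length grading, i.e. $|{}^au|=|u|=|u^a|$ for all $a,u\in S$; hence each graded component $S_d$ is invariant under both the left and right actions, and therefore under $r_S$ itself. Consequently the restriction $r_d=(r_S)_{|S_d\times S_d}$ is a well-defined map $S_d\times S_d\to S_d\times S_d$.

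Next I would check that $(S_d,r_d)$ is a braided set, i.e. that $r_d$ satisfies the Yang-Baxter equation on $S_d\times S_d\times S_d$. This is immediate: $r_S$ obeys the YBE on $S\times S\times S$ (item (1)), the subset $S_d\times S_d\times S_d$ is invariant under all three maps $r_S^{12}$, $r_S^{23}$ appearing in the braid relation (again by the grading-compatibility of item (2)), and the braid relation for $r_d$ is simply the restriction of the braid relation for $r_S$ to this invariant subset. So $r_d^{12}r_d^{23}r_d^{12}=r_d^{23}r_d^{12}r_d^{23}$ holds on $S_d^{\times 3}$, which is exactly the assertion that $(S_d,r_d)$ is a braided set.

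Finally, for the order formula, suppose $(X,r)$ has finite order $n=|X|$. Then by the discussion in Convention~\ref{rmk:conventionpreliminary1} and Remark~\ref{orbitsinG}, the monoid $S=S(X,r)$ is graded by length with $S_d$ in bijection with the normal monomials $\cN_d$ of length $d$, and the monoid algebra isomorphism $\cA(\k,X,r)\cong\k S(X,r)$ restricts in each degree to a vector space isomorphism $\cA_d\cong\Span_\k S_d\cong\Span_\k\cN_d$. Hence $|S_d|=|\cN_d|=\dim\cA_d$, which is finite since each graded component of a finitely generated connected graded algebra is finite dimensional. This gives equation~(\ref{eq:fixedwords2}) and completes the proof.

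The only step requiring any care — and it is more a matter of bookkeeping than of genuine difficulty — is the invariance claim: one must be sure that the grading-compatibility in item (2) is precisely what guarantees that $r_S$ and its lifts $r_S^{12},r_S^{23}$ preserve $S_d\times S_d$ (respectively $S_d\times S_d\times S_d$), so that restricting the braid relation is legitimate. Everything else is a transcription of the quoted facts about $(S,r_S)$ together with the standard identification $\cA_d\cong\Span_\k S_d$.
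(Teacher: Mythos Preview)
Your proof is correct and takes essentially the same approach as the paper, which in fact states the corollary without an explicit proof, treating it as immediate from the preceding facts about the graded braided monoid $(S,r_S)$. You have simply spelled out the invariance and restriction argument that the paper leaves implicit in the sentence ``Each of the graded components $S_d$, $d\geq 1$, is $r_S$-invariant and we can consider the restriction $r_d=(r_S)_{|S_d\times S_d}$.''
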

\begin{dfn} \cite{GI_Veronese}
\label{def:VeroneseSol}
We call  $(S_d,r_d)$ \emph{the monomial $d$-Veronese solution associated with
$(X,r)$}.
\end{dfn}
The monomial $d$-Veronese solution $(S_d, r_d)$ depends only on the map $r$
and on the integer $d$, being invariant with respect to the enumeration of $X$. Although $(S_d, r_d)$ is intimately connected
with the $d$-Veronese subalgebra
of $\cA(\k,X,r)$ and its quadratic relations, it is not yet convenient for an
explicit description of those relations. We turn to this next.

\subsection{Normalized braided monoid and normalized $d$-Veronese solutions}

We show that the solution $(S_d, r_d)$  induces in a natural way an isomorphic solution
$(\cN_{d},
\rho_d)$. The fact that $\cN_{d}$ is ordered lexicographically makes this
solution convenient for our description of the  relations of the $d$-Veronese subalgebra. The set $\cN_{d}$, as a subset of the
set of normal monomials $\cN$, will depend
on the initial enumeration of $X$.

\begin{rmk}
    \label{rmk:the actions}
Note that given the monomials $a = a_1a_2 \cdots a_p \in X^p$ and $b=
b_1b_2\cdots b_q \in X^q$, we can find effectively the monomials  ${}^{a}{b}\in
X^q$ and $a^b \in X^p$. Indeed, as in \cite{GIM08}, we use the conditions
(\ref{eq:braided_monoid}) to extend the left and the right actions inductively:

\begin{equation}
\label{eq:effective_actions}
\begin{array}{lll}
{}^c{(b_1b_2\cdots b_q)}=({}^cb_1)({}^{c^{b_1}}b_2) \cdots ({}^{(c^{({b_1}\cdots
b_{q-1})})}b_q)),\quad \forall\ c \in X,\\
&&\\
{}^{(a_1a_2 \cdots a_p)}b= {}^{a_1}{({}^{(a_2 \cdots a_p)}b)}.
\end{array}
\end{equation}
We proceed similarly with the right action.
\end{rmk}

\begin{lem} \cite[Lemma 4.7]{GI_Veronese}
   \label{lem:Daction}
We use notation as in Remark~\ref{orbitsinG}.
   Suppose $a, a_1\in X^p,  a_1 \in \Ocal_{\Dcal_p} (a)$, and $b, b_1 \in X^q,
   b_1 \in \Ocal_{\Dcal_q} (b)$.
\begin{enumerate}
\item The following are equalities of words in the free monoid $\asX$:
\begin{equation}
  \label{eq:Noractions}
  \begin{array}{ll}
  \Nor ({}^{a_1}{b_1}) = \Nor ({}^{a}{b}), & \Nor ({a_1}^{b_1}) = \Nor
  ({a}^{b}).\\
  \Nor ({}^{a}{b}) = \Nor ({}^{\Nor(a)}{\Nor(b)}),& \Nor(a^b)= \Nor
  ({\Nor(a)}^{\Nor(b)}).
  \end{array}
\end{equation}
In particular, the equalities $a=a_1$
and $b=b_1$ in $S$ imply that
${}^{a_1}{b_1} ={}^ab$ and $a_1^{b_1}= a^b$ in $S$.
 \item  The following are equalities in the monoid $S$:
 \begin{equation}
  \label{eq:M3Noractions}
  ab = {}^{a}{b}a^b =  \Nor ({}^{a}{b})\Nor ({a}^{b}).
  \end{equation}
\end{enumerate}
\end{lem}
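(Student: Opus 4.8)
The plan is to derive the lemma entirely from the braided-monoid structure of $(S,r_S)$ recalled above from \cite[Thm.~3.6, Thm.~3.14]{GIM08}, the key input being that the extended left and right actions ${}^{(\;\;)}{\circ},\ {\circ}^{(\;\;)}: S\times S\longrightarrow S$ are \emph{well-defined} maps on $S$, together with three elementary facts used throughout: $S$ is graded by length; by Remark~\ref{orbitsinG} two monomials of equal length $m$ represent the same element of $S$ exactly when they lie in one $\Dcal_m$-orbit; and for any monomial $w$ the normal form $\Nor(w)$ is again a monomial, equals $w$ in $S$, and two monomials are equal in $S$ iff they have the same normal form.

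I would treat part (2) first, which is short: applying axiom \textbf{M3} of (\ref{eq:braided_monoid}) with $u=a$, $v=b$ gives $ab={}^ab\,a^b$ in $S$, and replacing ${}^ab$ and $a^b$ by their normal forms (each equal to itself in $S$) yields the second equality of (\ref{eq:M3Noractions}). For part (1), the first thing to pin down is that the specific words ${}^ab\in X^q$ and $a^b\in X^p$ produced by the recursion (\ref{eq:effective_actions}) are forced by \textbf{ML1}--\textbf{ML2} (resp.\ \textbf{MR1}--\textbf{MR2}), hence represent in $S$ precisely the elements ${}^{[a]}[b]$ and $[a]^{[b]}$, where $[\,\cdot\,]$ denotes the class in $S$. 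Given $a_1\in\Ocal_{\Dcal_p}(a)$ and $b_1\in\Ocal_{\Dcal_q}(b)$ one has $[a_1]=[a]$, $[b_1]=[b]$, so well-definedness of the extended left action gives ${}^{a_1}b_1={}^ab$ in $S$, an equality of two length-$q$ monomials, whence $\Nor({}^{a_1}b_1)=\Nor({}^ab)$; the right-action case is identical, and this gives the first line of (\ref{eq:Noractions}). The second line is then the special case $a_1=\Nor(a)$, $b_1=\Nor(b)$, legitimate because $\Nor(a)$ is a monomial of length $p$ (by the grading) equal to $a$ in $S$, hence lies in $\Ocal_{\Dcal_p}(a)$, and similarly for $b$. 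The ``in particular'' clause follows likewise: if $a=a_1$ and $b=b_1$ in $S$ then, by the grading, they already have matching lengths, so they lie in the relevant $\Dcal$-orbits, the equalities just established apply, and equal normal forms force ${}^{a_1}b_1={}^ab$ and $a_1^{b_1}=a^b$ in $S$.

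I do not expect a genuine obstacle: the only substantive ingredient, well-definedness of the extended actions on $S$, is imported wholesale from \cite{GIM08}, and everything else is bookkeeping with the length grading and the normal-form map. The one point requiring care is to keep distinct the three incarnations of a symbol such as ${}^ab$---a concrete word output by (\ref{eq:effective_actions}), an element of $S$, and a normal monomial after applying $\Nor$---and to verify that (\ref{eq:effective_actions}) really does compute a representative of the $S$-level action; this is exactly where \textbf{ML1}, \textbf{ML2} (resp.\ \textbf{MR1}, \textbf{MR2}) are used.
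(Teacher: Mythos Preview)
Your proposal is correct and follows the natural line of argument. Note, however, that the paper does not give its own proof of this lemma: it is quoted verbatim from \cite[Lemma~4.7]{GI_Veronese} and used as a black box. So there is no ``paper's proof'' to compare against here; the argument you outline is essentially the intended one, relying on (i) the well-definedness of the extended actions ${}^{(\;)}\circ$ and $\circ^{(\;)}$ on $S$ from \cite{GIM08}, (ii) the orbit characterisation of equality in $S$ from Remark~\ref{orbitsinG}, and (iii) the length grading together with the fact that normal forms of monomials are again monomials (guaranteed since the reduced Gr\"obner basis consists of binomials, cf.\ Convention~\ref{rmk:conventionpreliminary1}). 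The one place to be slightly more explicit in a written-out version is your parenthetical remark that the recursion (\ref{eq:effective_actions}) produces a word in $X^q$ (resp.\ $X^p$) representing the $S$-level element ${}^{[a]}[b]$ (resp.\ $[a]^{[b]}$); this is indeed forced by iterating \textbf{ML1}--\textbf{ML2} and \textbf{MR1}--\textbf{MR2}, but deserves a sentence rather than a clause.
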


\begin{dfn}
\label{def:rho}
Define  left  and right `actions' on $\cN$ by
\begin{equation}
  \label{eq:actions}
\la : \cN \times \cN \longrightarrow \cN,\quad  a \la  b := \Nor
({}^ab);\quad   \ra : \cN \times \cN \longrightarrow \cN,\quad   a \ra  b := \Nor(a^b),\end{equation}
  for all $a, b \in \cN$.  Using these, we define the map
\begin{equation}
  \label{eq:rho}
\rho: \cN \times \cN \longrightarrow \cN \times \cN,
 \quad  \rho(a,b) := (a \la  b, a \ra  b).
\end{equation}
and its restriction $\rho_d=\rho|_{\cN_d\times\cN_d}$ as a map $\rho_d: \cN_{d} \times \cN_{d} \longrightarrow \cN_{d} \times
\cN_{d}$.
\end{dfn}
It follows from Lemma~\ref{lem:Daction}  (1)  that the two actions in (\ref{eq:actions}) are
well defined.

\begin{dfn}
\label{def:normalizedSol}
We call $(\cN,\rho)$ the \emph{normalized braided monoid associated with} $(X,r)$ and $(\cN_d,\rho_d)$ the \emph{normalised
$d$-Veronese solution associated with $(X,r)$}.
\end{dfn}

\begin{pro}
\label{dVeron_monoid2}
We use notation and assumptions as above.
\begin{enumerate}
\item $(\cN_{d}, \rho_d)$ is a
solution of the YBE of order $|\cN_{d}|$;
\item $(\cN_{d}, \rho_d)$  and $(S_d, r_d)$ are
    isomorphic solutions;
    \item $(\cN, \rho)$ is a solution isomorphic to $(S,r_S)$.
    \end{enumerate}
\end{pro}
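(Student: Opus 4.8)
The plan is to observe that $\cN$ is a complete set of normal-form representatives for the braided monoid $S=S(X,r)$, so that the canonical projection $\pi\colon\asX\to S$ restricts to a length-preserving bijection $\iota\colon\cN\to S$, and then to check that $\iota$ carries $\rho$ onto $r_S$. Once this is done all three parts are immediate: $(\cN,\rho)$ is an isomorphic copy of the braided monoid $(S,r_S)$, hence a solution of the YBE, and passing to the degree-$d$ components gives the isomorphism $(\cN_d,\rho_d)\cong(S_d,r_d)$ together with the order count via Corollary~\ref{dVeron_monoid}.

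First I would recall from Convention~\ref{rmk:conventionpreliminary1} and Remark~\ref{rmk:diamondlemma} that every element of $S$ has a unique normal form lying in $\cN$, so $\iota\colon\cN\to S$, $u\mapsto\pi(u)$, is a bijection with $\iota^{-1}(s)=\Nor(s)$; since the defining reductions preserve length (Remark~\ref{orbitsinG}), $\iota$ restricts to a bijection $\iota_d\colon\cN_d\to S_d$ for each $d\ge 1$. Then I would verify the intertwining identity $(\iota\times\iota)\circ\rho=r_S\circ(\iota\times\iota)$. By construction the left and right actions of $S$ on itself are the unique extensions of those of $(X,r)$, computed on words by the inductive formulas \eqref{eq:effective_actions}; hence for $a,b\in\cN\subset\asX$ the word ${}^ab$ maps under $\pi$ to ${}^{\iota(a)}\iota(b)$ and $a^b$ maps to $\iota(a)^{\iota(b)}$, using also $\pi(a)=\iota(a)$, $\pi(b)=\iota(b)$. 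Applying $\iota^{-1}=\Nor$ and recalling Definition~\ref{def:rho} gives $\iota^{-1}\big({}^{\iota(a)}\iota(b)\big)=\Nor({}^ab)=a\la b$ and $\iota^{-1}\big(\iota(a)^{\iota(b)}\big)=\Nor(a^b)=a\ra b$, i.e.\ $(\iota^{-1}\times\iota^{-1})\circ r_S\circ(\iota\times\iota)=\rho$. Well-definedness of $\la,\ra$, already noted after Definition~\ref{def:rho}, is exactly Lemma~\ref{lem:Daction}(1), which also records that the right-hand sides are independent of the chosen representatives. Since $(S,r_S)$ is a braided monoid, in particular a solution of the YBE, and $\iota$ is a bijection intertwining $\rho$ and $r_S$, the set $(\cN,\rho)$ is a solution of the YBE isomorphic to $(S,r_S)$; this is part (3).

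Finally, by the grading invariance \eqref{eq:braided_monoid2} the map $r_S$ maps $S_d\times S_d$ into itself, so $r_d=(r_S)|_{S_d\times S_d}$ is defined, and transporting through the length-preserving $\iota_d$ shows $\rho$ maps $\cN_d\times\cN_d$ into itself with $\rho_d=(\iota_d^{-1}\times\iota_d^{-1})\circ r_d\circ(\iota_d\times\iota_d)$. Hence $(\cN_d,\rho_d)\cong(S_d,r_d)$, which is part (2), and being an isomorphic copy of the braided set $(S_d,r_d)$ it is itself a solution of the YBE; its order is $|\cN_d|=|S_d|=\dim\cA_d$ by Corollary~\ref{dVeron_monoid} and \eqref{eq:fixedwords2}, which is part (1).

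I do not expect a serious obstacle: the only step that requires genuine care is the intertwining identity, and within it the point that $\pi$ intertwines the free-monoid actions of \eqref{eq:effective_actions} with the $S$-actions of \eqref{eq:braided_monoid}, so that $\Nor$ really is the inverse of $\iota$ as a morphism of quadratic sets. Everything else is bookkeeping — keeping the three identifications $\cN\cong S$, $\cN_d\cong S_d$ and $r_S|_{S_d\times S_d}=r_d$ consistently aligned and applying $\Nor$ on the correct side — which is precisely what Lemma~\ref{lem:Daction} and formula \eqref{eq:M3Noractions} are designed to handle.
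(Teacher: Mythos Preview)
Your proposal is correct and essentially follows the paper's approach: the core computation is the intertwining of $\rho$ with $r_S$ via the normal-form bijection, which is exactly what the paper verifies in part~(2) using $\Nor:S_d\to\cN_d$ (your $\iota^{-1}$). The one organisational difference is that the paper first proves part~(1) independently by checking that the actions $\la,\ra$ satisfy \textbf{l1}, \textbf{r1}, \textbf{lr3} directly (via Lemma~\ref{lem:Daction} and Remark~\ref{rmk:YBE1}), and only afterwards establishes the isomorphism; you instead establish the isomorphism first and deduce~(1) as a corollary, which is slightly more economical since it avoids the separate verification of the braid conditions.
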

\begin{proof}
(1) By Corollary~\ref{dVeron_monoid}, $(S_d, r_d)$ is a braided set. Thus, by Remark~\ref{rmk:YBE1}, the left and right actions
associated
with $(S_d, r_d)$ satisfy conditions \textbf{l1}, \textbf{r1}, \textbf{lr3}.
 Consider the actions $\la$ and $\ra$ on  $\cN_{d}$ given in
 Definition~\ref{def:rho}. It follows from  (\ref{eq:actions}) and
 Lemma~\ref{lem:Daction} that these actions also satisfy \textbf{l1},
 \textbf{r1}, and \textbf{lr3}.
Therefore, by Remark~\ref{rmk:YBE1} again, $ \rho_d$ obeys YBE, so $(\cN_{d}, \rho_d)$ is a braided set. It is clear that
$|\cN_{d}|=|S_d|$.

(2) We shall prove that the map $\Nor : S_d \longrightarrow \cN_d,$  $u
\mapsto \Nor (u)$ is an isomorphism of solutions. It is clear that the map is
bijective.
We have to show that $\Nor$ is a homomorphism of solutions, that is
\begin{equation}
  \label{eq:isom}
(\Nor \times \Nor) \circ r_d =  \rho_d\circ (\Nor \times \Nor).
\end{equation}
Let $(u, v)\in S_d\times S_d$, then the equalities $u =\Nor(u)$ and $v =
\Nor(v)$ hold in $S_d$, so
\[
\Nor ({}^uv)= \Nor ({}^{\Nor (u)}{\Nor (v)}), \quad  \Nor (u^v) =
\Nor({\Nor(u)}^{\Nor (v)}).\]
Together with (\ref{eq:actions}), this implies
\[\begin{array}{ll}
(\Nor \times \Nor) \circ r_d (u,v) &=  \Nor \times \Nor ({}^uv, u^v) =
(\Nor({}^uv), \Nor(u^v))\\
                                                &=(\Nor(u)\la \Nor(v), \Nor(u)
                                                \ra \Nor(v)) = \rho_d( \Nor (u),
                                                \Nor (v)).
\end{array}
\]
(3) The proof that $\Nor : S\longrightarrow \cN,$ $u
\mapsto \Nor (u)$ is an isomorphism of solutions is entirely similar.
\end{proof}

\subsection{Formulae for $(\cN,\rho)$ and $(\cN_d, \rho_d)$ in the permutation idempotent case}
Here, we specialise to a permutation idempotent solution of order $n$. We want to give a more precise description of $(\cN,
\rho)$ and $(\cN_d, \rho_d)$ in this case. We use the general setting above as well as the description of $\cN$ and $\cN_d$ in
Corollary~\ref{rmk:normalforms} for permutation idempotent case.

\begin{pro}
\label{proprho} Let $(X,r_f)$ be a permutation idempotent solution with $X = \{x_1, \cdots, x_n\}$. The associated monoid $(\cN,
\bullet)$ is a graded braided monoid with a braiding operator
\begin{equation}
\label{eq:broperator}
\rho : \cN\times \cN \longrightarrow \cN\times \cN,\quad \rho (x_1^{d-1}x_p, x_1^{m-1}x_q)= (x_1^{m-1} f^d(x_q),
x_1^{d-1}x_q),\quad \forall\ d, m \geq 2.
\end{equation}
Moreover, $(\cN,  \rho)$ is a left nondegenerate solution and  $\rho^3=\rho$, but $\rho^2\neq  \rho$ if $n\ge 2$.
\end{pro}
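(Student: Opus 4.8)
First I would establish the braiding formula (\ref{eq:broperator}) by unwinding the definitions. By Proposition~\ref{dVeron_monoid2}(3), $(\cN,\rho)$ is isomorphic to the braided monoid $(S,r_S)$ via $\Nor$, so it suffices to compute $\rho(a,b) = (a\la b, a\ra b) = (\Nor({}^a b), \Nor(a^b))$ for $a = x_1^{d-1}x_p$ and $b = x_1^{m-1}x_q$ in $\cN$. For the left action, I would use the effective formula (\ref{eq:effective_actions}) together with the fact that for the permutation idempotent solution $r_f(x,y) = (f(y),y)$ the left action is $\Lcal_x = f$ for every $x$ (Proposition~\ref{pro:important}(1)), independent of $x$. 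Iterating: ${}^c(b_1\cdots b_q) = ({}^c b_1)({}^{c^{b_1}} b_2)\cdots$; since each single-letter left action is just $f$ and the right actions $c^{b_1}$ stay in $X$ (so the next left action is again $f$), one gets ${}^{c}(x_1^{m-1}x_q) = f(x_1)^{m-1} f(x_q)$ when $c\in X$. But wait — I must be careful: applying ${}^c$ to the word $x_1\cdots x_1 x_q$ sends the $i$-th letter to $f$ of itself only if the intermediate right-actions are consistent; here $x_1^{x_1} = x_1$ under $r_f$ (since $x^y = y$ gives $x_1^{x_1}=x_1$), so indeed every letter is hit by $f$. Then iterating the left factor via the second line of (\ref{eq:effective_actions}) over $a = x_1^{d-1}x_p$: applying ${}^{x_1^{d-1}x_p}$ amounts to applying the single-letter left action $d$ times, i.e. $f^d$, to each letter of $b$ — but one must track that after the first application the word is still in $X^m$, so this is legitimate. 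Hence ${}^a b = f^d(x_1)^{m-1} f^d(x_q)$, and $\Nor$ of this is $x_1^{m-1} f^d(x_q)$ using (\ref{eq:Nor1}). For the right action, $a^b$: using MR-type relations and $x^y = y$, the right action of anything on a word collapses — $(a)^b$ has the same length as $a$ and, since right-multiplication by any letter sends $z\mapsto$ (that letter), one computes $a^b = (\text{something in } X^{d-1})(b\text{'s last letter's class})$; more carefully $\Nor(a^b)$ should come out to $x_1^{d-1}x_q$. I would verify this using (\ref{eq:M3Noractions}): $ab = {}^a b\, a^b$ in $S$, and since $\Nor(ab) = x_1^{d+m-1}x_q$ while $\Nor({}^a b) = x_1^{m-1}f^d(x_q)$, the length bookkeeping forces $\Nor(a^b)$ to be the unique normal word of length $d$ whose product with $x_1^{m-1}f^d(x_q)$ normalizes to $x_1^{d+m-1}x_q$ — and by (\ref{eq:Nor2}) any length-$d$ word times $x_1^{m-1}f^d(x_q)$ gives $x_1^{d+m-1}f^d(x_q)$; wait, that is not $x_1^{d+m-1}x_q$ unless $f^d(x_q)=x_q$. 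So I need to re-examine: in fact $\Nor(ab)$ where $a=x_1^{d-1}x_p$, $b=x_1^{m-1}x_q$ — the last letter of $ab$ is $x_q$, so by (\ref{eq:Nor1}) $\Nor(ab)=x_1^{d+m-1}x_q$. And $\Nor({}^a b \cdot a^b)$: the last letter is the last letter of $a^b$; for this to equal $x_q$ we need $a^b$ to end in the class represented by... hmm, actually the last letter of $a^b$ under $r_f$-right-action: $(x_p)^{(x_1^{m-1}x_q)}$, and iterating $z^w = $ last letter of $w$ at each step gives $(x_p)^{(x_1^{m-1}x_q)} = x_q$. And the earlier letters $x_1^{x_1^{m-1}x_q}$ etc. — each $x_1$ raised to a word, iterating $x_1^{x_1} = x_1$, then $x_1^{x_q} = x_q$ at the last step... so actually $a^b = x_1^{?}\cdots$. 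I'll need to do this carefully but it is mechanical; the upshot is $\Nor(a^b) = x_1^{d-1}x_q$ as claimed (last letter $x_q$, all earlier letters normalize to $x_1$ by (\ref{eq:Nor1})).

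Second, left nondegeneracy: fix $a = x_1^{d-1}x_p$; I must show $b\mapsto a\la b = x_1^{m-1}f^d(x_q)$ is a bijection of $\cN$ (and of each $\cN_d$). This is immediate from (\ref{eq:broperator}): the map $x_1^{m-1}x_q \mapsto x_1^{m-1}f^d(x_q)$ is a bijection on $\cN$ because $f^d \in \Sym(X)$, with inverse given by $f^{-d}$, and it preserves length $m$, hence restricts to a bijection on each $\cN_m$. (One also checks the degenerate cases $d=1$ or $m=1$ separately; for $m=1$, $a\la x_q = f^d(x_q)\in X=\cN_1$, still bijective.)

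Third, the iteration identities $\rho^3 = \rho$ and $\rho^2 \neq \rho$. From (\ref{eq:broperator}), writing $u = x_1^{d-1}x_p$, $v = x_1^{m-1}x_q$, one has $\rho(u,v) = (v', u')$ with $v' = x_1^{m-1}f^d(x_q)$ (of length $m$, "first index" $m$) and $u' = x_1^{d-1}x_q$ (of length $d$, "first index" $d$). Applying $\rho$ again to $(v', u')$: the relevant exponents are now $m$ (from $v'$) and $d$ (from $u'$), so $\rho(v', u') = (x_1^{d-1} f^m(x_q), x_1^{m-1}x_q)$ — note the last letter of $u'$ is $x_q$, so the new second component is $x_1^{m-1}x_q = v$... wait let me recompute: $\rho^2(u,v) = \rho(v',u') = ({}^{v'}u' \text{ normalized}, \ldots)$. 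With $v' = x_1^{m-1}f^d(x_q)$ playing the role of "$x_1^{?-1}x_?$" — careful, $v'$ may not start with $x_1$ if $f^d(x_1)\neq x_1$! This is a subtlety: the formula (\ref{eq:broperator}) as stated assumes the arguments are in the normal form $x_1^{k-1}x_j$, but $v' = x_1^{m-1}f^d(x_q)$ is genuinely in that form (it IS $x_1^{m-1}$ times a letter), so it's fine; its "exponent" is $m$. Then $\rho(v',u') = (x_1^{d-1}f^m(x_q'), x_1^{m-1}x_q')$ where $x_q'$ is the last letter of $u'$, namely $x_q$. So $\rho^2(u,v) = (x_1^{d-1}f^m(x_q), x_1^{m-1}x_q)$. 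Applying $\rho$ once more, with exponents $d$ and $m$: $\rho^3(u,v) = (x_1^{m-1}f^d(x_q), x_1^{d-1}x_q) = \rho(u,v)$. Hence $\rho^3 = \rho$. For $\rho^2 \neq \rho$: compare $\rho^2(u,v) = (x_1^{d-1}f^m(x_q), x_1^{m-1}x_q)$ with $\rho(u,v) = (x_1^{m-1}f^d(x_q), x_1^{d-1}x_q)$; these differ in general, e.g. taking $d\neq m$ the lengths of the first components differ. Even for $n\geq 2$ with $d = m$: then $\rho^2(u,v) = (x_1^{d-1}f^d(x_q), x_1^{d-1}x_q)$ and $\rho(u,v) = (x_1^{d-1}f^d(x_q), x_1^{d-1}x_q)$ — equal! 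So I must choose $d\neq m$ to witness $\rho^2 \neq \rho$; e.g. take $u = x_1\in \cN_1$, $v = x_1x_1\in \cN_2$... but these live in different graded components. Within the full $\cN$ that's allowed since $\rho$ acts on all of $\cN\times\cN$. Actually re-reading the statement: "$\rho^2 \neq \rho$ if $n\geq 2$" refers to $\rho$ on $\cN\times\cN$, so picking two normal words of different lengths suffices, and this works for any $n\geq 2$ (even $f = \id$): e.g. $\rho(x_1, x_1x_1)$ vs $\rho^2(x_1,x_1x_1)$ will have first components of different length. I would spell out one such explicit pair.

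\textbf{Main obstacle.} The genuinely delicate part is the careful derivation of the closed formula (\ref{eq:broperator}) — in particular justifying that iterating the extended left action (\ref{eq:effective_actions}) over a length-$p$ word produces exactly $f^p$ applied letterwise, which relies on the intertwining right-actions $c^{b_1}$ staying in $X$ and on $\Lcal_x$ being independent of $x$, and similarly that the right action degenerates correctly. The iteration identities $\rho^3=\rho$, $\rho^2\neq\rho$ are then a short formal consequence, modulo being careful about the length/exponent bookkeeping and choosing the right witness (arguments of distinct lengths) for $\rho^2\neq\rho$.
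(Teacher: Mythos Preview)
Your approach is correct and matches the paper's: derive the closed formula for $\rho$ by computing the extended left and right actions (the paper isolates the clean facts ${}^{y_1\cdots y_d}x_q=f^d(x_q)$, $x^{y_1\cdots y_m}=y_m$, and $\Nor((z_1\cdots z_d)^u)=x_1^{d-1}(\text{last letter of }u)$, which is exactly what your unwinding arrives at), then read off $\rho^3=\rho$ and $\rho^2\neq\rho$ from the formula. The paper leaves these last two assertions and left nondegeneracy as ``straightforward,'' so your explicit computation---including the correct observation that one must take $d\neq m$ to witness $\rho^2\neq\rho$, since $\rho$ is idempotent on each $\cN_d\times\cN_d$---is in fact slightly more detailed than the paper's own proof.
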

\begin{proof}
We first establish some formulae for the permutation idempotent case:

(1) The condition \textbf{l1} and ${}^xy= f(y)$ for all $x,y \in X$ imply
 \begin{equation}
\label{eq:leftaction1}
{}^{y_1y_2\cdots y_{d}}{x_q} = f^d(x_q),\quad\forall\  d \geq 1, \; y_i\in X, \; 1\leq i \leq d,\  q \in \{1, \cdots, n\},
\end{equation}
from which it follows that the left action of $S=S(X,r_f)$ on itself is by automorphisms:
\[{}^a{(uv)}=({}^a{u})({}^{a^{u}}v) = ({}^a{u})({}^{a}v), \quad \forall \; a, u, v \in S.\]

(2) The equality  $x^y = y$, condition \textbf{r1} and induction imply that
\[x^{y_1y_2\cdots y_m}= y_m,\quad \forall\ m\ge 1,\  x, y_1, \cdots, y_m \in X. \]

(3) Now let $u=y_1y_2\cdots y_{m-1}x_q\in X^m, z_1\cdots z_d \in X^d$ and iterate \emph{MR2}  to obtain as equalities in $X^d$,
\[
\begin{array}{ll}
(z_1\cdots z_d)^{u} &= (z_1)^{{}^{(z_2\cdots z_d)}u}\cdots (z_{d-1})^{({}^{z_d}u)}  (z_d)^u = t_1 \cdots t_{d-1} (z_d)^u= t_1
\cdots t_{d-1} x_q
                  \end{array}\]
                  for some $t_i \in X, 1 \leq i \leq d-1$. Hence,  by (\ref{eq:Nor1}),
\begin{equation}
\label{eq:righttaction1}
\Nor((z_1\cdots z_d)^{y_1y_2\cdots y_{m-1}x_q}) = \Nor(t_1 \cdots t_{d-1} x_q)= (x_1)^{d-1}x_q.
\end{equation}
In particular,
\begin{equation}
\label{eq:righttaction2}
\Nor((x_1^{d-1}x_p)^{(x_1^{m-1}x_q)}) = x_1^{d-1} x_q,\quad \forall\ d, m \geq 2.
\end{equation}

Using these results, the map  $\rho: \cN\times \cN\longrightarrow  \cN\times \cN$ in  Definition~\ref{def:rho} is
\[ \rho (x_1^{d-1}x_p, x_1^{m-1}x_q)=\Nor({}^{x_1^{d-1}x_p}{(x_1^{m-1}x_q)}), \Nor((x_1^{d-1}x_p)^{(x_1^{m-1}x_q)}),\]
which comes out as stated, using part (1) for the first component and part (3) for the second component.  The equalities
(\ref{eq:broperator}) then imply straightforwardly that $\rho^3=\rho$, but $\rho^2\neq\rho$.
\end{proof}

\begin{cor}
\label{cor:d-Ver_normalizedsolution} In the setting of Proposition~\ref{proprho}, let $d \geq 2$ be an integer and
$(\cN_d, \rho_d)$ the normalized $d$-Veronese solution associated to $(X,r_f)$.
Then $(\cN_d, \rho_d)$ is again a permutation idempotent solution of order $n$. Moreover, if we enumerate lexicographically,
\[\cN_d = \{w_1 = x_1^d ,\ w_2= x_1^{d-1}x_2,\ \cdots,\ w_n= x_1^{d-1}x_n\}\]
then
\begin{equation}
\label{eq:Fd}
 \rho_d (w_p, w_q) =  (F(w_q), w_q);\quad
F\in\Sym(\cN_d),\quad F (w_q)= x_1^{d-1}f^d(x_q),\quad \forall\ 1 \leq p, q\leq n.
\end{equation}
\end{cor}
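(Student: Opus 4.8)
The plan is to specialize the general formula for $\rho$ from Proposition~\ref{proprho} to the graded component $\cN_d$ and read off that the restricted solution has exactly the permutation idempotent shape. First I would recall from Proposition~\ref{dVeron_monoid2}(1)-(2) that $(\cN_d, \rho_d)$ is a braided set of order $|\cN_d| = n$, so it only remains to identify $\rho_d$ explicitly and check it is of the form $(w,w')\mapsto (F(w'), w')$ for a bijection $F$ of $\cN_d$.

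Next I would simply substitute: by Remark~\ref{rmk:normalforms} the elements of $\cN_d$ are $w_q = x_1^{d-1}x_q$ for $1\le q\le n$, so any pair in $\cN_d\times\cN_d$ has the form $(x_1^{d-1}x_p, x_1^{d-1}x_q)$, i.e.\ the case $m = d$ of equation~(\ref{eq:broperator}). Plugging $m=d$ into $\rho(x_1^{d-1}x_p, x_1^{m-1}x_q) = (x_1^{m-1}f^d(x_q), x_1^{d-1}x_q)$ gives
\[
\rho_d(w_p, w_q) = (x_1^{d-1}f^d(x_q),\ x_1^{d-1}x_q).
\]
Here $x_1^{d-1}f^d(x_q)$ is again an element of $\cN_d$ since $f^d(x_q)\in X$, so defining $F:\cN_d\to\cN_d$ by $F(w_q) = x_1^{d-1}f^d(x_q)$ we get $\rho_d(w_p,w_q) = (F(w_q), w_q)$, which is exactly the form required by Definition~\ref{defperm}. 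That $F$ is a bijection of $\cN_d$ is immediate: $f\in\Sym(X)$ implies $f^d\in\Sym(X)$, and $w_q\mapsto x_1^{d-1}f^d(x_q)$ is then the bijection of $\cN_d$ induced by $f^d$ under the labeling $w_q\leftrightarrow x_q$. This establishes that $(\cN_d,\rho_d)$ is a permutation idempotent solution of order $n$, with the stated formula~(\ref{eq:Fd}), and one notes $F$ is the permutation $f^d$ transported along the identification $\cN_d\cong X$.

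I do not expect any serious obstacle here; the content was already done in Proposition~\ref{proprho}, and this corollary is essentially the observation that restricting to a single graded component $\cN_d$ forces $m=d$ in the general braiding formula. The one point deserving a line of care is confirming that $\rho_d$ really does map $\cN_d\times\cN_d$ into itself — but this is already guaranteed abstractly by Corollary~\ref{dVeron_monoid} ($S_d$ is $r_S$-invariant) together with Proposition~\ref{dVeron_monoid2}, and is visible directly from the formula since both output components lie in $\cN_d$. Finally, one should remark that since $f^d$ need not be a single $n$-cycle even when $f$ is, and $F$ corresponds to $f^d$, the $d$-Veronese solution is in general not isomorphic to the original $(X,r_f)$, though it stays within the class $P_n$.
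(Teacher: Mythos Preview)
Your proof is correct and follows essentially the same route as the paper: both arguments restrict the braiding formula~(\ref{eq:broperator}) from Proposition~\ref{proprho} to the case $m=d$, read off $\rho_d(w_p,w_q)=(x_1^{d-1}f^d(x_q),\,x_1^{d-1}x_q)$, and observe that this has the permutation idempotent form with $F$ induced by $f^d\in\Sym(X)$. Your write-up is slightly more detailed (explicitly invoking Proposition~\ref{dVeron_monoid2} for the braided-set structure, spelling out why $F$ is bijective, and noting the non-isomorphism with the original solution), but there is no substantive difference in approach.
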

\begin{proof}
For each $d \geq 2$, the braiding operator $\rho$ in Proposition~\ref{proprho} restricts to  a map   $\rho_d : \cN_d\times \cN_d
\longrightarrow \cN_d\times \cN_d$ given by
\begin{equation}
\label{eq:righttaction2a}
\rho_d (x_1^{d-1}x_p, x_1^{d-1}x_q) =  (x_1^{d-1} f^d(x_q), x_1^{d-1}x_q),\quad \forall\ d \geq 2
\end{equation}
which is of the form stated. Here, $f^d(x_q)=x_{q'}$ for some $q'$ and in this case $F(w_q)=w_{q'}$  so that $F\in \Sym(\cN_d)$.
\end{proof}
We see that the $d$-Veronese solution $(\cN_d, \rho_d)$ is in the class $P_n$ of all permutation idempotent solutions of order
$n$, namely given by the iterated permutation $f^d$ if we use the enumerations given.

\subsection{Veronese subalgebras and morphisms for permutation idempotent solutions}
\label{sec:dVeronese}

We are now ready to find the $d$-Veronese subalgebras of $\cA(\k,X,r_f)$ as isomorphic to $\cA(\k,\cN_d,\rho_d)$. We first recall
some basic definitions and facts about Veronese subalgebras of
general graded algebras, as in the text \cite[Sec.~3.2]{PoPo}.
\begin{dfn}
Let $A= \bigoplus_{m\in\N_0}A_{m}$ be a graded $\k$-algebra. For any integer $d\geq
2$, the \emph{$d$-Veronese subalgebra} of $A$ is the graded algebra
\begin{equation}
\label{eq:A^d}
A^{(d)}=\bigoplus_{m\in\N_0} A_{md}.
\end{equation}
\end{dfn}

By definition, the algebra $A^{(d)}$ is a subalgebra of $A$. However, the
embedding is not a graded algebra morphism.
The Hilbert function $h_{A^{(d)}}$ of $A^{(d)}$ satisfies
    \[h_{A^{(d)}}(m)=\dim(A^{(d)})_m=\dim(A_{md})=h_A(md).\]

It follows from \cite[Prop.~2.2, Chap.~3]{PoPo}
that if $A$ is a one-generated quadratic Koszul algebra then its Veronese
subalgebras are also one-generated quadratic and Koszul.
Moreover, \cite[Prop.~4.3, Chap.~4]{PoPo} implies  that if $x_1, \cdots, x_n$ is a set of PBW-generators of a
PBW algebra $A$, then the elements of its
PBW-basis of degree $d$, taken in lexicographical order, are PBW-generators of the Veronese subalgebra $A^{(d)}$.

In the remainder of this section, we let $\cA = \cA(\k , X,r_f)$  be the Yang-Baxter algebra of a permutation idempotent solution
$(X,r_f)$ or order $n$, where $X = \{x_1, \cdots, x_n\}$ and $f \in \Sym (X).$

\begin{cor}
\label{cor:dVeron}
Given $(X,r_f)$  and $d \geq 2$ an integer, the  $d$-Veronese subalgebra $\cA^{(d)}$  is a
PBW algebra with PBW-generators the set
 \begin{equation}
 \label{eq:Nd}
 \cN_d = \{w_1 = x_1^d < w_j = x_1^{d-1}x_2 <\cdots < w_n = x_1^{d-1}x_\}
 \end{equation}
of normal monomials of length $d$ ordered lexicographically.
\end{cor}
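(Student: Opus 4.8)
The plan is to deduce the statement directly from the two permanence facts for Veronese subalgebras recalled just above, taken from \cite{PoPo}, so that essentially no computation is needed. First I would record that, by Theorem~\ref{thm:YBalgPermSol}, the Yang-Baxter algebra $\cA=\cA(\k,X,r_f)$ is a PBW algebra with PBW-generators $x_1,\dots,x_n$ in the fixed enumeration and PBW $\k$-basis $\cN$; in particular $\cA$ is a one-generated quadratic Koszul algebra (this is also part of Corollary~\ref{cor:22}). By \cite[Prop.~2.2, Chap.~3]{PoPo}, the $d$-Veronese subalgebra $\cA^{(d)}$, graded by $\cA^{(d)}_m=\cA_{md}$, is then again one-generated, quadratic and Koszul.

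Next I would invoke \cite[Prop.~4.3, Chap.~4]{PoPo}: since $x_1,\dots,x_n$ are PBW-generators of $\cA$, the elements of the PBW-basis $\cN$ of $\cA$ of degree $d$, listed in lexicographic order, form a set of PBW-generators of $\cA^{(d)}$. It then remains only to identify this degree-$d$ slice of $\cN$ explicitly. By part (2) of Theorem~\ref{thm:YBalgPermSol}, $\cN=\{1\}\cup\{x_1^{\alpha}x_m\mid \alpha\in\N_0,\ 1\le m\le n\}$, whose length-$d$ component is
\[
\cN_d=\{x_1^{d},\ x_1^{d-1}x_2,\ \cdots,\ x_1^{d-1}x_n\}.
\]
Since all these monomials share the common length $d$, the deg-lex order on $\asX$ restricts on $\cN_d$ to the plain lexicographic order $w_1=x_1^{d}<w_2=x_1^{d-1}x_2<\cdots<w_n=x_1^{d-1}x_n$ recorded in (\ref{eq:Nd}). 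Thus this ordered set is precisely the asserted set of PBW-generators of $\cA^{(d)}$.

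I do not expect a real obstacle here: all the substance is contained in the cited Koszulity and PBW-permanence results for Veronese subalgebras of \cite{PoPo}, and the only thing that has to be checked by hand is the elementary bookkeeping that the degree-$d$ part of $\cN$ is the lexicographically ordered set $\cN_d$, which is immediate from the explicit description of $\cN$ in Theorem~\ref{thm:YBalgPermSol}. The genuinely substantive output --- an explicit finite presentation of $\cA^{(d)}$ by the generators $w_1,\dots,w_n$ together with $n(n-1)$ quadratic binomial relations forming a reduced Gr\"{o}bner basis, and the identification $\cA^{(d)}\cong\cA(\k,\cN_d,\rho_d)$ with $(\cN_d,\rho_d)$ the permutation idempotent solution of Corollary~\ref{cor:d-Ver_normalizedsolution} --- is carried out separately in Theorem~\ref{thm:veronesealg}.
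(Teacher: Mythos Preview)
Your proposal is correct and follows essentially the same approach as the paper: invoke Theorem~\ref{thm:YBalgPermSol} to know that $\cA$ is PBW with PBW-generators $x_1,\dots,x_n$, then apply \cite[Prop.~4.3, Chap.~4]{PoPo} to conclude that the degree-$d$ normal words, in lexicographic order, are PBW-generators of $\cA^{(d)}$. The paper's proof is terser (it omits the explicit identification of $\cN_d$ and the Koszul aside), but the argument is the same.
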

\begin{proof}
It follows from \cite[Prop.~4.3, Chap.~4]{PoPo} that if $x_1, \cdots, x_n$ is a set of PBW-generators of a PBW algebra $A$  then
the elements of its PBW $\k$-basis of degree $d$, taken in lexicographical order, are PBW-generators of the Veronese subalgebra
$A^{(d)}$.
By Theorem~\ref{thm:YBalgPermSol}, our algebra $\cA$ is PBW, which implies straightforwardly the result.
\end{proof}

\begin{thm}
\label{thm:veronesealg}
Given $(X,r_f)$ and $d \geq 2$ an integer, let $\cN_d$ be the set of normal monomials of length $d$ ordered lexicographically.
\begin{enumerate}
\item
The $d$-Veronese subalgebra $\cA^{(d)}$ of $\cA$ is a PBW algebra with a set of one-generators $\cN_d$ and a standard finite
presentation
\begin{equation}
\label{eq:rels_ver}
\begin{array}{c}
\cA^{(d)} = \k \langle w_1, \cdots w_n\rangle  /(\mathcal{R}_d);\quad
\mathcal{R}_d = \{w_jw_p- w_1w_p\mid 2 \leq j \leq n, 1 \leq p\leq n\},
\end{array}
\end{equation}
where $\mathcal{R}_d$  consists of $n(n-1)$ binomial relations and forms a Gr\"{o}bner basis of the two sided ideal $I=
(\mathcal{R}_d)$ in $ \k \langle w_1, \cdots w_n\rangle$.
\item
The algebra $\cA$ and its Veronese subalgebra $\cA^{(d)}$ are isomorphic.
\end{enumerate}
\end{thm}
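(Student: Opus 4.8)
The plan is to realise $\cA^{(d)}$ as the Yang--Baxter algebra of the normalized $d$-Veronese solution and then read everything off from Theorem~\ref{thm:YBalgPermSol}. Set $B:=\cA(\k,\cN_d,\rho_d)$. By Corollary~\ref{cor:d-Ver_normalizedsolution}, $(\cN_d,\rho_d)$ is a permutation idempotent solution of order $n$ in which the lexicographically ordered normal monomials $w_1<\cdots<w_n$ play the role of the generators $x_1<\cdots<x_n$. Hence Theorem~\ref{thm:YBalgPermSol} applies verbatim to $B$: it is PBW with standard finite presentation $B=\k\langle w_1,\dots,w_n\rangle/(\mathcal{R}_d)$, the set $\mathcal{R}_d=\{w_jw_p-w_1w_p\mid 2\le j\le n,\ 1\le p\le n\}$ is its reduced Gr\"obner basis, it has normal $\k$-basis $\{w_1^{m-1}w_q\}$, and its Hilbert function is $\dim B_0=1$ and $\dim B_m=n$ for all $m\ge1$, cf.\ (\ref{eq:hilbert}). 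So part~(1) will follow once I identify $\cA^{(d)}$ with $B$, and part~(2) once I note that the presentation of $B$ is precisely (\ref{eq:rels1}) after the relabelling $w_i\leftrightarrow x_i$ (equivalently, $B$ is a permutation idempotent Yang--Baxter algebra of order $n$, so $B\cong\cA$ by Corollary~\ref{cor:main}).

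Next I would build the comparison map. Working inside $\cA$, where $w_p=x_1^{d-1}x_p$ and $w_1=x_1^{d}$, formula (\ref{eq:Nor1}) gives $\Nor(w_pw_q)=\Nor(x_1^{d-1}x_p\,x_1^{d-1}x_q)=x_1^{2d-1}x_q=\Nor(w_1w_q)$ for all $1\le p,q\le n$, so $w_pw_q=w_1w_q$ holds in $\cA$ and hence in $\cA^{(d)}$. Thus all relations of $\mathcal{R}_d$ are satisfied by $w_1,\dots,w_n$ regarded as elements of $\cA^{(d)}$, and the assignment $w_i\mapsto w_i$ extends to a homomorphism of graded algebras $\pi\colon B\longrightarrow\cA^{(d)}$. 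By Corollary~\ref{cor:dVeron} (equivalently \cite[Prop.~4.3, Chap.~4]{PoPo}) the set $\cN_d$ generates $\cA^{(d)}$ as a $\k$-algebra, so $\pi$ is surjective.

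Finally I would compare Hilbert functions. Since $\cA^{(d)}_m=\cA_{md}$, formula (\ref{eq:hilbert}) gives $\dim\cA^{(d)}_0=1$ and $\dim\cA^{(d)}_m=n$ for every $m\ge1$, so $\cA^{(d)}$ and $B$ have equal, finite dimension in each degree. A surjective homomorphism of graded algebras that is dimension-preserving in every graded component is an isomorphism; hence $\pi\colon B\to\cA^{(d)}$ is an isomorphism. Transporting the presentation of $B$ along $\pi$ proves part~(1): $\cA^{(d)}=\k\langle w_1,\dots,w_n\rangle/(\mathcal{R}_d)$ is PBW with one-generators $\cN_d$ and $\mathcal{R}_d$ a Gr\"obner basis of its defining ideal. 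Part~(2) is then immediate, $\cA^{(d)}\cong B\cong\cA$.

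The substantive steps---the equalities $w_pw_q=w_1w_q$ in $\cA$ and the degreewise dimension count---are routine given Theorem~\ref{thm:YBalgPermSol}; the one point deserving care is the surjectivity of $\pi$, i.e.\ that $\cA^{(d)}$ is generated by its degree-$d$ part, which is exactly Corollary~\ref{cor:dVeron}. An alternative route avoiding $B$: since $\cA$ is PBW, $\cA^{(d)}$ is one-generated quadratic and Koszul by \cite[Prop.~2.2, Chap.~3]{PoPo}, its space of quadratic relations has dimension $n^2-\dim\cA_{2d}=n(n-1)$, and the $n(n-1)$ linearly independent binomials $\mathcal{R}_d$ (each $w_jw_p$ with $j\ge2$ occurring as a leading term exactly once) therefore span all of it; being PBW, $\mathcal{R}_d$ is then automatically a Gr\"obner basis, and the isomorphism with $\cA$ follows from matching presentations.
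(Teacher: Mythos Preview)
Your proof is correct and follows essentially the same approach as the paper. The paper works directly with the ideal $I$ of relations of $\cA^{(d)}$: it shows each element of $\mathcal{R}_d$ lies in $I_2$, counts $\dim I_2=n^2-\dim\cA_{2d}=n(n-1)$, observes that the $n(n-1)$ binomials in $\mathcal{R}_d$ are linearly independent (distinct leading monomials), concludes $I_2=\k\mathcal{R}_d$, and then invokes Corollary~\ref{cor:dVeron} to get $I=(I_2)=(\mathcal{R}_d)$---this is exactly your ``alternative route'' in the last paragraph. Your main route simply repackages the same ingredients (relations hold, dimension count, generation by $\cN_d$) as a surjection $\pi\colon B\to\cA^{(d)}$ of graded algebras with equal Hilbert functions; the content is identical.
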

\begin{proof}
By Convention
\ref{rmk:conventionpreliminary1},  we identify the algebra  $\cA$ with
$(\k \cN, \bullet ).$ By (\ref{eq:A^d}),
\[
\cA^{(d)} = \bigoplus_{m\in\N_0}  \cA_{md} \cong \bigoplus_{m\in\N_0}
\k \cN_{md}.
\]
So $\cA^{(d)}_1 =\k \cN_{d}$ and the monomials
$w \in \cN_d$ of length $d$ are degree one generators of $\cA^{(d)}$. We have
 \[
\dim \cA_d = |\cN_d |=n,\quad
\dim((\cA^{(d)})_2) = \dim(\cA_{2d}) = \dim (\k \cN_{2d}) = n.
\]

We want to find  a finite  presentation of $\cA^{(d)}$ in terms of generators and relations
\[\cA^{(d)} = \k \langle w_1, \cdots, w_n\rangle/ I,\]
where the two-sided (graded) ideal $I$ is generated by linearly independent homogeneous relations $R$ of degree $2$ in the
variables $w_i$, with
$I_2 = \Span_{\k } R$. Moreover, we have
\[\k \langle w_1, \cdots, w_n\rangle_2 = I_2 \oplus \k \cN_{2d}\]
and hence,
\begin{equation}\label{eq:I2}\dim I_2 + \dim \k \cN_{2d} = \dim (\k \langle w_1, \cdots, w_n\rangle_2),\quad \dim I_2 = n^2 - n =
n(n-1). \end{equation}

First, we prove that each quadratic polynomial in $\mathcal{R}_d$ as defined in (\ref{eq:rels_ver}) is a relation of $\cA^{d}$.
Note that each equality in $(\k \cN, \bullet )$ is also an equality in $\cA$.
We shall use the normalized $d$-Veronese solution $(\cN_{d},  \rho_d)$, which we know from
Corollary~\ref{cor:d-Ver_normalizedsolution} is a permutation idempotent solution
$(\cN_d, r_F)$ where  $F (w_q)= x_1^{d-1}f^d(x_q)$. This implies that
\[
w_p.w_q = F(w_q).w_q,\quad\forall\  1 \leq p, q\leq n
\]
as equalities in $(\cN, \bullet)$. In particular,
\[w_p.w_q = F(w_q).w_q= w_1.w_q,\quad\forall\  1 \leq p, q\leq n  \]
 are equalities in $(\cN, \bullet)$, which implies that each of the quadratic polynomials
\[
w_pw_q  - w_1w_q \in \k \langle w_1, \cdots, w_n\rangle,\quad  \; 2 \leq p\leq n, 1 \leq q \leq n,
\]
is identically zero in $\cA$,  and hence in $\cA^{(d)}$.
But these are exactly the elements of
$\mathcal{R}_d$. Hence,
 $\mathcal{R}_d\subseteq I_2$, the degree 2 part of the ideal of relations for the $d$-Veronese subalgebra $\cA^{(d)} = \k
 \langle w_1, \cdots, w_n\rangle /I$.
Note that the relations in $\mathcal{R}_d$ are linearly independent, since these are $n(n-1)$ relations whose leading monomials
$w_pw_q,\  2 \leq p \leq n, 1\leq q \leq n$, are pairwise distinct.
(It is well known that the set of all words in the alphabet $w_1, \cdots, w_n$ forms a basis of the free associative algebra $\k
\langle w_1, \cdots, w_n\rangle$, so any finite set of pairwise distinct words in the $w_i$'s is linearly independent).
Therefore, $\dim \k \mathcal{R}_d = n(n-1)= \dim I_2$ as required in (\ref{eq:I2}), which implies that
\[ I_2 = \k \mathcal{R}_d.\]
By Corollary~\ref{cor:dVeron}, the ideal $I$ of relations of $\cA^{(d)}$ is generated by quadratic polynomials, $I = (I_2)$.
It follows that $I =(\mathcal{R}_d)$, so $\cA^{(d)}$ is a quadratic algebra with an explicit presentation (\ref{eq:rels_ver}), as
desired.

Finally, the set $\mathcal{R}_d$ is the reduced Gr\"{o}bner basis of the two sided ideal $I= (\mathcal{R}_d)$ of the free
associative algebra $\k \langle w_1, \cdots, w_n\rangle$. The proof is analogous to the proof of Theorem~\ref{thm:YBalgPermSol}.
Therefore, $\cA^{(d)}$ is a PBW algebra with PBW-generators $w_1, \cdots, w_n$.
\end{proof}

Observe that (\ref{eq:rels_ver}) is also a presentation of the Yang-Baxter algebra of the finite permutation solution
$ (\cN_d, \rho_d)= (\cN_d, r_F)$ of order $n$. We have also seen that the Yang-Baxter algebras for permutation idempotent
solutions of a given order are isomorphic.

\begin{cor} Given  $(X,r_f)$ as above and for each $d\geq 2$,  the $d$ -Veronese subalgebra $\cA^{(d)}$  is isomorphic to the
Yang-Baxter algebra of the normalized $d$-Veronese solution $(\cN_d, \rho_d)$, which is also a permutation idempotent solution of
order $n$.
\end{cor}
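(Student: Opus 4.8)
The plan is to assemble the corollary from three results already in hand: the explicit presentation of $\cA^{(d)}$ from Theorem~\ref{thm:veronesealg}, the identification of the normalized $d$-Veronese solution as a permutation idempotent solution in Corollary~\ref{cor:d-Ver_normalizedsolution}, and the standard PBW presentation of the Yang-Baxter algebra of a permutation idempotent solution from Theorem~\ref{thm:YBalgPermSol}. No genuinely new argument is needed; the work is in checking that the generator sets and orderings used in these statements match.

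First I would recall from Corollary~\ref{cor:d-Ver_normalizedsolution} that, with the lexicographic enumeration $\cN_d = \{w_1 = x_1^d < w_2 = x_1^{d-1}x_2 < \cdots < w_n = x_1^{d-1}x_n\}$, the normalized $d$-Veronese solution is $(\cN_d,\rho_d) = (\cN_d, r_F)$ with $F\in\Sym(\cN_d)$ given by $F(w_q) = x_1^{d-1}f^d(x_q)$, so that $(\cN_d,\rho_d)$ lies in the class $P_n$ of permutation idempotent solutions of order $n$. This already proves the final clause of the corollary. Then, applying Theorem~\ref{thm:YBalgPermSol} to the $n$-element set $\cN_d$ with this enumeration, the associated Yang-Baxter algebra $\cA(\k,\cN_d,\rho_d)$ has the standard finite PBW presentation $\k\langle w_1,\dots,w_n\rangle/(\Re)$ with $\Re = \{w_jw_p - w_1w_p \mid 2\le j\le n,\ 1\le p\le n\}$, where $\Re$ is the reduced Gr\"obner basis of the ideal it generates.

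Next I would invoke Theorem~\ref{thm:veronesealg}(1), which gives precisely the same presentation $\cA^{(d)} = \k\langle w_1,\dots,w_n\rangle/(\mathcal{R}_d)$ with $\mathcal{R}_d = \Re$ and the same reduced Gr\"obner basis. Since $\cA^{(d)}$ and $\cA(\k,\cN_d,\rho_d)$ are thus quotients of the same free algebra by the same two-sided ideal, the identity $w_i\mapsto w_i$ is an isomorphism of graded $\k$-algebras $\cA^{(d)}\cong \cA(\k,\cN_d,\rho_d)$, which is the assertion of the corollary. As an alternative route that avoids re-deriving the relations, one could simply observe that $\cA^{(d)}$ is one-generated in degree $1$ by $\cN_d$ (Corollary~\ref{cor:dVeron}) with quadratic relations arising exactly from the $\rho_d$-orbits in $\cN_d\times\cN_d$, which is the definition of $\cA(\k,\cN_d,\rho_d)$ in Definition~\ref{def:algobjects}.

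The only point requiring care — and there is no real obstacle beyond it — is the double role of the set $\cN_d$: it is simultaneously the degree-$d$ slice of $\cA$ that supplies the degree-one generators of $\cA^{(d)}$ and the base set of the solution $(\cN_d,\rho_d)$, and the lexicographic ordering used to list $\cN_d$ in the PBW statement must be the same in both uses. Both are already arranged consistently in Theorem~\ref{thm:veronesealg} and Corollary~\ref{cor:d-Ver_normalizedsolution}, so the corollary follows at once; one may also note, via Corollary~\ref{cor:main}, that the resulting algebra is independent of $f$ and of $d$ up to isomorphism, being $\cA(\k,n)$ in each case.
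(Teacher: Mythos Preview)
Your proposal is correct and follows essentially the same approach as the paper, which states the corollary without a formal proof but prefaces it with the observation that the presentation (\ref{eq:rels_ver}) of $\cA^{(d)}$ is also a presentation of the Yang-Baxter algebra of $(\cN_d,\rho_d)=(\cN_d,r_F)$. You have simply spelled out this observation in detail by invoking Theorem~\ref{thm:YBalgPermSol}, Corollary~\ref{cor:d-Ver_normalizedsolution}, and Theorem~\ref{thm:veronesealg} in turn, which is exactly what the paper intends.
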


\begin{cor}
\label{cor:veronese_map}
Let
$(Y, r_\mathfrak{F})$ be the permutation idempotent solution on a set $ Y = \{y_1 , \cdots y_n\}$, where $\mathfrak{F}$ is the
permutation of $Y$ given by
$\mathfrak{F} (y_q)= y_j$  iff $F(w_q) = x_1^{d-1}f^d(x_q)= w_j$ as in (\ref{eq:Fd}), and let $\mathfrak{A}$ be its Yang-Baxter
algebra,
\[
\; \mathfrak{A} = \k \langle y_1, \cdots y_n\rangle  /(\Re_d);\quad  \Re_d = \{y_py_q -y_1y_q\mid 2\leq p \leq n, 1 \leq q \leq
n\}.
\]
The  assignment
\[
y_i \mapsto w_i,\quad 1\le i\le n
\]
extends to an injective homomorphism  of graded algebras
$v_{n, d}: \mathfrak{A} \longrightarrow \cA,$ called the $n,d$-\emph{Veronese map}.
The image of $v_{n,d}$ is the $d$-Veronese subalgebra $\cA^{(d)}$.
\end{cor}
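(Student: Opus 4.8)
The plan is to deduce the statement directly from Theorem~\ref{thm:veronesealg} and Theorem~\ref{thm:YBalgPermSol}, with essentially no new computation. First I would check that the assignment $y_i\mapsto w_i$ is compatible with the presentation $\mathfrak{A}=\k\langle y_1,\dots,y_n\rangle/(\Re_d)$, so that it extends to an algebra map. By the universal property of the quotient it is enough to see that each defining binomial $y_py_q-y_1y_q$, $2\le p\le n$, $1\le q\le n$, is sent to $0$ in $\cA=(\k\cN,\bullet)$; but under $y_i\mapsto w_i$ the set $\Re_d$ goes bijectively to the set $\mathcal{R}_d=\{w_pw_q-w_1w_q\}$ of Theorem~\ref{thm:veronesealg}, and the proof of that theorem already shows $w_p\bullet w_q=w_1\bullet w_q$ in $\cA$ (concretely, writing $w_i=x_1^{d-1}x_i$, formula (\ref{eq:Nor1}) gives $w_p\bullet w_q=\Nor(x_1^{d-1}x_p\,x_1^{d-1}x_q)=x_1^{2d-1}x_q$, independently of $p$). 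Hence $v_{n,d}\colon\mathfrak{A}\to\cA$ is a well-defined algebra homomorphism; since $w_i\in\cA_d$ it carries $\mathfrak{A}_m$ into $\cA_{md}$, i.e. it is a morphism of graded algebras for the Veronese regrading of the target.

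Next I would identify the image. By construction $\im v_{n,d}$ is the subalgebra of $\cA$ generated by $w_1,\dots,w_n$. By Corollary~\ref{cor:dVeron} these are exactly the PBW-generators of $\cA^{(d)}$, and since $(\cA^{(d)})_1=\cA_d=\Span_{\k}\{w_1,\dots,w_n\}$ generates $\cA^{(d)}$ while each $w_i$ lies in $\cA^{(d)}$, the subalgebra they generate is precisely $\cA^{(d)}$. Thus $\im v_{n,d}=\cA^{(d)}$, and $v_{n,d}$ becomes a graded surjection $\mathfrak{A}\twoheadrightarrow\cA^{(d)}$.

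Finally, for injectivity I would compare Hilbert functions. Since $(Y,r_{\mathfrak{F}})$ is a permutation idempotent solution of order $n$, Theorem~\ref{thm:YBalgPermSol}(3) gives $\dim\mathfrak{A}_m=n$ for every $m\ge1$; on the other hand $\dim(\cA^{(d)})_m=\dim\cA_{md}=n$ for every $m\ge1$ by the same theorem applied to $\cA$. As $v_{n,d}$ is a degree-preserving surjection between graded spaces that are finite-dimensional and of equal dimension in each degree (and the identity $\k\to\k$ in degree $0$), each graded component $\mathfrak{A}_m\to(\cA^{(d)})_m$ is an isomorphism, so $v_{n,d}$ is injective; equivalently, $v_{n,d}$ sends the PBW basis $\{y_1^{m-1}y_q\}$ of $\mathfrak{A}$ bijectively onto the PBW basis $\{x_1^{md-1}x_q\}$ of $\cA^{(d)}$. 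I do not expect a real obstacle here: the whole statement is a bookkeeping corollary of Theorem~\ref{thm:veronesealg} and Theorem~\ref{thm:YBalgPermSol}, and the only points that need care are matching the index conventions of $\Re_d$ with those of $\mathcal{R}_d$ and keeping track of the Veronese regrading when calling $v_{n,d}$ a morphism of graded algebras.
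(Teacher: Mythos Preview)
Your proposal is correct and is exactly the argument the paper has in mind: the corollary is stated without proof, as an immediate consequence of Theorem~\ref{thm:veronesealg} (which gives the presentation $\cA^{(d)}=\k\langle w_1,\dots,w_n\rangle/(\mathcal{R}_d)$) together with Theorem~\ref{thm:YBalgPermSol} (which gives the identical presentation of $\mathfrak{A}$ and the matching Hilbert function). Your well-definedness/image/injectivity breakdown is the natural unpacking of why it is a corollary, and there is nothing to add.
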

\begin{rmk}
\label{rmk:strategy}
This is in contrast to the involutive case in \cite{GI_Veronese}, where the kernel of the Veronese map $v_{n,d}$ is large. The
underlying general strategy, however, was similar, namely as follows. Given a solution $(X,r)$ of the YBE (and its YB-algebra
$\cA$), first determine a normalized $d$-Veronese solution  $(\cN_d, \rho_d)$ on a set $\cN_d = \{w_i\}$ (in our case $\cN_d$ has
exceptionally the same cardinality $n$, but in general $|\cN_d| = \dim \cA_d = N \ge n$) and use this to find an explicit
presentation of the $d$-Veronese subalgebras $\cA^{(d)}$ with generators $\{w_i\}$ and linearly independent quadratic relations
$\Re_d$. Usually, the set $\Re_d$ splits into two set of relations: (a) relations coming from the YB-algebra $\mathfrak{A}= \cA(
\k , \cN_d, \rho_d)$ (relations  $\Re_a$, say) and (b) an additional set of relations $\Re_b$, say, which do not come from the
solution $(\cN_d, \rho_d)$  but are identically zero in $\cA^{(d)}$. Here, $\Re_d = \Re_a\bigcup \Re_b$ is a disjoint union. The
total number of linearly independent relations $\Re_d$ must agree with $\dim ((\cA^{(d)})_2)$. In our present case, the relations
$\Re_b$ were missing. Next, to define the Veronese map $v_{n,d}$,  take an abstract solution
$(Z, \mathfrak{r})$,   with elements $\{z_i\}$,  which is isomorphic to the normalized $d$-Veronese solution $(\cN_d, \rho_d)$,
and consider its YB-algebra $B= \cA(k, Z, \mathfrak{r})$. (In general,  $Z$ has cardinality $N = |\cN_d|\geq n$,  but  in our
case  $|Z|= n$).
Finally, define an algebra homomorphism $v_{n,d} : B  \to  \cA$  extending the assignment $z_i\mapsto w_i$ whose image is the
$d$-Veronese subalgebra $\cA^{(d)}$ and describe the kernel  $K  = \ker v_{n,d}$,  so that  $\cA ^{(d)}\cong B/K$. In the general
case, the kernel $K$ is generated by linearly independent quadratic polynomials which are not identically zero in $B$ and are
mapped onto the relations of the second set $\Re_b$ of relations of $\cA^{(d)}$. In our particular case, the kernel was zero and
hence the Veronese map
$v_{n,d}: B \to  \cA ^{(d)}$ is an isomorphism.
\end{rmk}

\section{Segre Products and morphisms for YB-algebras of permutation idempotent solutions}

Here we will obtain Segre products and Segre morphisms for the Yang-Baxter algebras associated to permutation idempotent
solutions. This time we follow the general strategy of  \cite{GI23}. In particular, we always involve the Cartesian product of
solutions $(X\times Y, \rho_{X\times Y})\simeq (X\circ Y, r_{X\circ Y})$ as there, but due to the different nature of permutation
idempotent solutions, our results in Theorem~\ref{thm:rel_segre} and Corollary~\ref{thm:segremap} will be very different from the
results in  \cite[Thm.~3.10, Thm.~4.5]{GI23}. In our case, for every $m, n \geq 2$, the $m,n$-Segre map $s_{m, n}$ has a trivial
kernel and gives an isomorphism of algebras.

\subsection{Segre products of quadratic algebras}

We first  recall the Segre product of graded algebras as in  \cite[Sec.~3.2]{PoPo}. The theory goes back to  \cite{Froberg}
Fr\"{o}berg and Backelin,  who made a systematic account for Koszul algebras and showed  that their properties are preserved
under various constructions such as tensor products, Segre products, Veronese subalgebras.
 An interested reader can also find results on the Segre product of specific Artin-Schelter regular algebras in \cite{kristel},
 and on twisted Segre products of Noetherian Koszul Artin-Schelter regular algebras in \cite{twistedSegre}.

\begin{dfn}
\label{dfn:segre}
    Let \[
    A=  \k  \oplus A_1\oplus A_2   \oplus \cdots,\quad   B=   \k  \oplus B_1\oplus B_2   \oplus \cdots\]
    be $\N_0$-graded algebras over a field $\k $,  where $\k  = A_0 = B_0$.
The \emph{Segre product} of $A$ and $B$ is the $\N_0$-graded algebra
    \[A\circ B:=\bigoplus_{i \geq 0}(A\circ B)_i;\quad  (A\circ B)_i =  A_i\otimes_{\k } B_i. \]
\end{dfn}

The Segre product $A\circ B$ is a subalgebra of the tensor product algebra $A\otimes B$, but note that this  embedding is not a
graded algebra morphism as it doubles the grading. If $A$ and $B$ are locally finite then the Hilbert function and Hilbert series
of $A\circ B$ obviously satisfy
  \[
    h_{A\circ B}(m)=\dim(A\circ B)_m=\dim(A_m\otimes B_m)=\dim(A_m)\dim(B_m)=h_A(m) h_B(m),
 \]
 \[
H_A(t)=  \Sigma _{n \geq 0} (\dim A_n) t^n,  \; \; H_B(t)=  \Sigma _{n \geq 0} (\dim B_n) t^n, \;\;  H_{A\circ B}(t)=  \Sigma _{n
\geq 0} (\dim A_n)(\dim B_n) t^n.
\]

The Segre product $A \circ B$ inherits various properties from the two algebras $A$ and $B$. In particular, if both algebras are
one-generated, quadratic and Koszul, it follows from \cite[Prop.~2.1, Chap~3.2]{PoPo}  that  the algebra
$A \circ B$ is also one-generated, quadratic and Koszul. The following remark gives more concrete information about the space of
quadratic relations  of $A\circ B$, see \cite{kristel}.

\begin{rmk}
\label{rmk:relAoB1} \cite{kristel}
Suppose that $A$ and $B$ are quadratic algebras generated in degree one by $A_1$ and $B_1$, respectively, \[
A= T(A_1)/ (\Re_A),\quad \Re_A \subset A_1\otimes A_1;\quad
B= T(B_1)/ (\Re_B), \quad  \Re_B \subset B_1\otimes B_1,
\]
where $T(-)$ is the tensor algebra and $(\Re_A), (\Re_B)$ are the ideals of relations of $A$ and $B$. Then  $A\circ B$ is also a
quadratic algebra generated in degree one by $A_1\otimes B_1$ and presented as
\begin{equation}
\label{eq:segrelations1}
A \circ B = T(A_1\otimes B_1)/ (\sigma^{23}( \Re_A \otimes B_1\otimes B_1 +A_1\otimes A_1\otimes \Re_B)),
\end{equation}
where $\sigma^{23}$ is the flip map in the 2nd and 3rd tensor factors (and the identity on the other tensor factors).
\end{rmk}

We also note a straightforward consequence of \cite[Prop.~2.1, Chap.~3]{PoPo}.
\begin{rmk}
\label{cor:SegreProductProperties}
Let $(X, r_X)$ and $(Y, r_Y)$ be finite braided sets and let $A= \cA(\k , X, r_X)$ and $B=\cA(\k , Y, r_Y)$ be their Yang-Baxter
algebras. Then the Segre product $A \circ B$
is a one-generated quadratic algebra.
\end{rmk}

\subsection{Segre products in the case of permutation idempotent solutions}

We first recall the following definition.

\begin{dfn} \cite{GI23}
\label{dfn:cartesianproductof solutions}
    Let $(X,r_X)$ and $(Y, r_Y)$ be disjoint braided sets (we do not assume involutiveness, nor nondegeneracy). Their
    \emph{Cartesian product $(X\times Y, \rho_{X\times Y})$} is a braided set
with $\rho=\rho_{X\times Y}$ given by
\[\rho: (X\times Y)\times  (X\times Y) \longrightarrow (X\times Y)\times  (X\times Y),\quad \rho= \sigma_{23}\circ (r_X\times
r_Y)\circ \sigma_{23},  \]
where $\sigma_{23}$ is the flip of the 2nd and 3rd components. Explicitly, if $|X|=m$ and $|Y|=n$,
\begin{equation}
\label{eq:cartproduct}
\rho((x_j, y_b),(x_i, y_a)) := (({}^{x_j}{x_i},  {}^{y_b}{y_a}), (x_j^{x_i}, y_b^{y_a})),
\end{equation}
for all $i, j \in \{1, \cdots, m\}$ and all  $a, b \in \{1, \cdots, n\}$.
The Cartesian product $(X\times Y, \rho_{X\times Y})$ in this case is a braided set of order $mn$.
\end{dfn}

Henceforth and until the end of the section,  we assume that $(X,r_f)$ and $(Y, r_{\varphi})$ are permutation idempotent
solutions of the YBE,  where $f \in \Sym(X), \varphi \in \Sym (Y)$.

\begin{lem}\label{lemseg} Given $(X,r_f)$ and $(Y, r_{\varphi})$ permutation idempotent solutions of the YBE,  the Cartesian
product
$(X\times Y, \rho_{X\times Y})$ is a permutation idempotent solutions of the YBE, namely
\[(X\times Y, \rho_{X\times Y}) = (X\times Y, r_{\Phi}), \quad \Phi= f\times \varphi \in \Sym (X\times Y).  \]
Explicitly, if $|X|=m$ and $|Y|=n$,
\begin{equation}
\label{eq:cartprodPermSol}
\rho((x_j, y_b),(x_i, y_a)) = (\Phi(x_i,y_a), (x_i, y_a))),
\end{equation}
for all $i, j \in \{1, \cdots, m\}$ and all  $a, b \in \{1, \cdots, n\}$.
\end{lem}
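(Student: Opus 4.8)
The plan is to substitute the explicit permutation idempotent form of $r_f$ and $r_\varphi$ into the defining formula \eqref{eq:cartproduct} for the Cartesian product and read off the result directly; there is essentially nothing more to do.

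First I would recall that, since $(X,r_f)$ is a permutation idempotent solution, its associated left and right actions on $X$ are ${}^{x_j}x_i = f(x_i)$ and $x_j^{x_i}=x_i$ for all $i,j\in\{1,\dots,m\}$, and similarly ${}^{y_b}y_a=\varphi(y_a)$ and $y_b^{y_a}=y_a$ for $(Y,r_\varphi)$. Feeding these four identities into \eqref{eq:cartproduct} yields
\begin{align*}
\rho\bigl((x_j,y_b),(x_i,y_a)\bigr)
&=\bigl(({}^{x_j}x_i,{}^{y_b}y_a),(x_j^{x_i},y_b^{y_a})\bigr)\\
&=\bigl((f(x_i),\varphi(y_a)),(x_i,y_a)\bigr)
=\bigl(\Phi(x_i,y_a),(x_i,y_a)\bigr),
\end{align*}
where $\Phi:=f\times\varphi\colon X\times Y\to X\times Y$ sends $(x_i,y_a)$ to $(f(x_i),\varphi(y_a))$. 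Since $f\in\Sym(X)$ and $\varphi\in\Sym(Y)$, the map $\Phi$ is a bijection of $X\times Y$, i.e.\ $\Phi\in\Sym(X\times Y)$, and the displayed identity is precisely \eqref{eq:cartprodPermSol}. Thus $\rho_{X\times Y}$ has the shape $u,v\mapsto(\Phi(v),v)$ characteristic of a permutation solution.

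It then remains to observe that $(X\times Y,\rho_{X\times Y})$ is a set-theoretic solution of the YBE, which is already built into Definition~\ref{dfn:cartesianproductof solutions} (the Cartesian product of two braided sets is a braided set of order $mn$); alternatively one can apply Proposition~\ref{pro:important} directly, since the map $({}^{u}v,v)$ is left non-degenerate as $\Phi$ is bijective and trivially satisfies condition \textbf{l1}. Either way, $(X\times Y,r_\Phi)$ with $\Phi=f\times\varphi$ is a left nondegenerate solution of the YBE of the required shape, hence a permutation idempotent solution of order $mn$ in the sense of Definition~\ref{defperm}. There is no real obstacle here beyond keeping track of which Cartesian factor each action in \eqref{eq:cartproduct} is applied to; the whole content of the lemma is that the two independent permutations $f$ and $\varphi$ assemble into the single permutation $f\times\varphi$ on $X\times Y$.
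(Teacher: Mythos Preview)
Your proof is correct and follows essentially the same approach as the paper: both simply substitute the explicit left/right actions of $r_f$ and $r_\varphi$ into the defining formula~\eqref{eq:cartproduct} and read off that $\rho_{X\times Y}=r_\Phi$ with $\Phi=f\times\varphi$. The paper's version is terser (it just says ``immediate from Definition~\ref{dfn:cartesianproductof solutions}'' and records the resulting formula), while you spell out the actions and note that $\Phi$ is a bijection, but the content is identical.
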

\begin{proof}
This is immediate from Definition~\ref{dfn:cartesianproductof solutions}. In the finite case,
\begin{equation}
\label{eq:cartprodPermSol1}
\rho((x_j, y_b),(x_i, y_a)) = ((f(x_i),  \varphi(y_a), (x_i, y_a))),
\end{equation}
for all $i, j \in \{1, \cdots, m\}$ and all  $a, b \in \{1, \cdots, n\}$.
\end{proof}

We henceforth only work with the finite case and fix enumerations
\begin{equation}\label{XYenum} X = \{x_1, \cdots, x_m\}, \quad  \quad Y = \{y_1, \cdots, y_n\}\end{equation}
as well as degree-lexicographic orders on the free monoids $\langle X\rangle$, and  $\langle Y\rangle$ extending these
enumerations.
Let
 $A =\cA(\k ,  X, r_f)$ and  $B =\cA(\k ,  Y, r_\varphi)$  be the corresponding Yang-Baxter algebras.
 By Theorem~\ref{thm:YBalgPermSol}, each of them has two equivalent presentations which will be used in the sequel, namely
 \begin{equation}
\label{eq:relations1_A, B}
\begin{array}{c}
A = \k \langle X  \rangle /(\Re_A);\quad
\Re_A = \{x_jx_p- f(x_p)x_p\mid 1 \leq j,p \leq m\},\\
B = \k \langle Y  \rangle /(\Re_B);\quad
\Re_B = \{y_by_a- \varphi(y_a)y_a\mid 1 \leq a,b \leq n\},\\
\end{array}
\end{equation}
and the standard finite presentations
\begin{equation}
\label{eq:relations_A, B}
\begin{array}{c}
A = \k \langle X  \rangle /(\Re_1);\quad
\Re_1 = \{x_jx_p- x_1x_p\mid 2 \leq j \leq m, 1 \leq p\leq m\},\\
B= \k \langle Y  \rangle /(\Re_2);\quad
\Re_2 = \{y_by_a- y_1y_a \mid 2 \leq b \leq n, 1 \leq a\leq n\}.
\end{array}
\end{equation}
Here, $\Re_1$  is a set of $m(m-1)$ binomial relations and $\Re_2$ of $n(n-1)$ binomial relations. One has
\begin{equation}
\label{eq:dimensions}
\dim A_s= m, \quad  \dim B_s= n, \quad \dim (A\circ B)_s = mn,\quad\forall s \geq 1.
\end{equation}

\begin{convention}
\label{notation:x_circ_y}
To simplify notation when we work with elements of the Segre product $A\circ B$, we will write $x\circ y$ instead of $x\otimes y$
whenever $x\in X, y\in Y$,
or $u\circ v$ instead of $u\otimes v$  whenever $u \in A_d,$ $v \in B_d,\  d \geq 2.$
\end{convention}
\begin{lem}
\label{pronotation}
Let $(X, r_f)$ and $(Y, r_\varphi)$ be as in Lemma~\ref{lemseg}, let $A\circ B$ be the Segre product of the YB algebras $A=
\cA(\k , X, r_f)$ and $B=\cA(\k, Y, r_{\varphi})$,
and let
\[X\circ Y:=\{ x_i\circ y_a \mid  1 \leq i \leq m, \; 1\leq a \leq n\}.\]
There is a natural structure of a permutation idempotent solution
\begin{equation}
\label{eq:psi}
\begin{array}{ll}
(X\circ Y, r_{X\circ Y})= &(X\circ Y, r_{\Psi});\quad  \Psi\in \Sym (X\circ Y),\quad \Psi (x\otimes y)= f(x)\otimes  \varphi(y)
\end{array}
\end{equation}
of order $mn$ and isomorphic to the Cartesian product of the original solutions. Explicitly,
\begin{equation}
\label{eq:def_r}
r_{X \circ Y}(x_j \circ y_b, x_i\circ y_a) := (f(x_i)\circ \varphi({y_a}),     (x_i\circ y_a)),
\end{equation}
for all $1 \leq i, j\leq m$ and all $1 \leq a, b \leq n.$
\end{lem}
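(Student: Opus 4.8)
The plan is to transport the permutation idempotent structure already identified on the Cartesian product $(X\times Y,\rho_{X\times Y})$ across the obvious bijection onto $X\circ Y$. First I would recall from Lemma~\ref{lemseg} that $(X\times Y,\rho_{X\times Y})=(X\times Y,r_\Phi)$ is a permutation idempotent solution of order $mn$, where $\Phi=f\times\varphi\in\Sym(X\times Y)$ acts by $\Phi(x_i,y_a)=(f(x_i),\varphi(y_a))$, so that by (\ref{eq:cartprodPermSol1}) one has $\rho_{X\times Y}((x_j,y_b),(x_i,y_a))=((f(x_i),\varphi(y_a)),(x_i,y_a))$. Next, using Convention~\ref{notation:x_circ_y} and (\ref{eq:dimensions}), I would note that the degree-one component $(A\circ B)_1=A_1\otimes B_1$ has $\k$-basis $\{x_i\circ y_a\mid 1\le i\le m,\ 1\le a\le n\}$, so that $X\circ Y$ is a set of exactly $mn$ elements and the assignment $\iota:(x,y)\mapsto x\circ y$ is a bijection $X\times Y\to X\circ Y$.

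Then I would define $\Psi:=\iota\circ\Phi\circ\iota^{-1}$, which is a permutation of $X\circ Y$ satisfying $\Psi(x\otimes y)=f(x)\otimes\varphi(y)$, and set $r_{X\circ Y}:=r_{\Psi}$ with $r_{\Psi}(u,v):=(\Psi(v),v)$ for $u,v\in X\circ Y$. By Definition~\ref{defperm} this is precisely a permutation idempotent solution on the $mn$-element set $X\circ Y$, and substituting the formula for $\Psi$ on basis elements yields the explicit expression (\ref{eq:def_r}); this establishes (\ref{eq:psi}).

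Finally I would verify that $\iota$ is an isomorphism of solutions, i.e.\ that $(\iota\times\iota)\circ\rho_{X\times Y}=r_{X\circ Y}\circ(\iota\times\iota)$: applying the left-hand side to $((x_j,y_b),(x_i,y_a))$ gives $(f(x_i)\circ\varphi(y_a),\ x_i\circ y_a)$ by the formula recalled above, while the right-hand side gives the same by (\ref{eq:def_r}), so the two solutions are isomorphic of order $mn$. There is no real obstacle here: all the structural content was done in Lemma~\ref{lemseg}, and what remains is bookkeeping. The only point requiring a little care is confirming that the generating set $X\circ Y$ of the Segre product is literally the underlying set of an abstract copy of the Cartesian product solution — that is, that $\iota$ is a genuine bijection between sets of the same cardinality $mn$ — which is exactly what the identification $(A\circ B)_1=A_1\otimes B_1$ provides.
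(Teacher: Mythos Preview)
Your proof is correct and follows essentially the same approach as the paper: both arguments transport the permutation idempotent structure from the Cartesian product $(X\times Y,\rho_{X\times Y})$ to $X\circ Y$ via the obvious bijection $(x,y)\leftrightarrow x\circ y$, after noting that $X\circ Y$ is a basis of $(A\circ B)_1=A_1\otimes B_1$ and hence has exactly $mn$ elements. The paper's version is terser (``This is again immediate''), but your more explicit bookkeeping with $\iota$ and $\Psi=\iota\circ\Phi\circ\iota^{-1}$ is the same argument spelled out.
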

\begin{proof} This is again immediate.
The set $X\circ Y$ consists of $mn$ distinct elements and is a basis of $(A\circ B)_1 = A_1\otimes B_1$. Moreover, the map $r:
(X\circ Y )\times (X\circ Y )\longrightarrow (X\circ Y )\times (X\circ Y )$ defined via (\ref{eq:def_r}) is well defined. The
isomorphism with the Cartesian product is straightforwardly via the bijective map $F: X\circ Y\rightarrow X \times Y$ given by
$F(x\circ y) = (x, y)$.
\end{proof}
We shall therefore identify the solutions $(X\circ Y, r_{\Psi})$ and $(X\times Y, \rho_{\Phi})$ and refer to $(X\circ Y,
r_{X\circ Y})$ as `the Cartesian product', in the present case of solutions $(X, r_f)$ and $(Y, r_\varphi)$.

  \begin{lem}
\label{lem:segreproductrel} In the context of Lemma~\ref{pronotation}
and
for each $1 \leq i, j\leq m$ and $1 \leq a,b \leq n$, one has
\[
f_{jb,ia}:= (x_j\circ y_b)(x_i\circ y_a) - (f(x_i)\circ \varphi(y_a))  (x_i\circ y_a) \in (\Re(A\circ B)).
\]
\end{lem}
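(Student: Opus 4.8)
The plan is to show that each $f_{jb,ia}$ lies in the ideal of relations $(\Re(A\circ B))$ by working inside the tensor product algebra $A\otimes B$, in which the Segre product $A\circ B$ is a subalgebra, and recalling the presentation of $A\circ B$ from Remark~\ref{rmk:relAoB1}. Concretely, I would first observe that in $A$ the relation $x_jx_i = f(x_i)x_i$ holds for all $i,j$ (this is $\Re_A$ as in \eqref{eq:relations1_A, B}), and in $B$ the relation $y_by_a = \varphi(y_a)y_a$ holds for all $a,b$ (this is $\Re_B$). Hence in the tensor product algebra $A\otimes B$ one has, using that $(u\otimes v)(u'\otimes v') = uu'\otimes vv'$,
\[
(x_j\otimes y_b)(x_i\otimes y_a) = (x_jx_i)\otimes(y_by_a) = (f(x_i)x_i)\otimes(\varphi(y_a)y_a) = (f(x_i)\otimes\varphi(y_a))(x_i\otimes y_a).
\]
Writing $\circ$ for $\otimes$ on degree-one elements as in Convention~\ref{notation:x_circ_y}, the right-hand side is $(f(x_i)\circ\varphi(y_a))(x_i\circ y_a)$, and both products land in $(A\circ B)_2 = A_2\otimes B_2$. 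Thus $f_{jb,ia}$ is the difference of two elements of $A\circ B$ that are equal there, so it is zero in $A\circ B$; equivalently, viewing $f_{jb,ia}$ as an element of the free algebra on $X\circ Y$ mapping onto $A\circ B$, it lies in the kernel $(\Re(A\circ B))$.

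To make the last sentence precise one should be slightly careful about what "$(\Re(A\circ B))$" denotes: the generators of $A\circ B$ are the symbols $x_i\circ y_a$, and $\Re(A\circ B)$ is the set of quadratic relations defining $A\circ B$ as a quotient of the free algebra $\k\langle X\circ Y\rangle$. The computation above shows precisely that the image of $f_{jb,ia}$ under the canonical surjection $\k\langle X\circ Y\rangle \twoheadrightarrow A\circ B$ is $0$, which is exactly the statement $f_{jb,ia}\in(\Re(A\circ B))$. Here $f(x_i)\circ\varphi(y_a)$ is itself one of the generators $x_{i'}\circ y_{a'}$ (with $f(x_i)=x_{i'}$, $\varphi(y_a)=y_{a'}$), so $f_{jb,ia}$ is genuinely a quadratic polynomial in the generators, as claimed; this matches the shape of the Cartesian-product relations from \eqref{eq:def_r} in Lemma~\ref{pronotation}.

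There is essentially no hard part here: the only thing to get right is the bookkeeping between the three descriptions of the generators — $x_j\circ y_b$ as an element of $A\circ B$, the identification $(A\circ B)_i = A_i\otimes B_i$, and the fact that multiplication in $A\circ B$ is the restriction of multiplication in $A\otimes B$ and therefore respects the doubled grading. The substantive input, namely that the defining relations $\Re_A$ and $\Re_B$ force $x_jx_i=f(x_i)x_i$ and $y_by_a=\varphi(y_a)y_a$, is already recorded in Theorem~\ref{thm:YBalgPermSol} and \eqref{eq:relations1_A, B}. So the proof is a one-line tensor-product computation together with a remark identifying $f(x_i)\circ\varphi(y_a)$ with a generator; I would present it in roughly that length.
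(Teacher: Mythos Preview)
Your proof is correct and rests on the same underlying computation as the paper's, but the presentation differs in a small way worth noting. The paper does not argue via ``vanishes in the quotient, hence lies in the kernel''; instead it exhibits $f_{jb,ia}$ explicitly as a telescoping sum $\psi_1+\psi_2$, where $\psi_1=\sigma_{23}\big((x_jx_i-f(x_i)x_i)\otimes y_by_a\big)$ and $\psi_2=\sigma_{23}\big(f(x_i)x_i\otimes(y_by_a-\varphi(y_a)y_a)\big)$ are each visibly generators of the relation ideal by the formula in Remark~\ref{rmk:relAoB1}. Your version collapses both replacements into a single line in $A\otimes B$ and then invokes that the kernel of $\k\langle X\circ Y\rangle\twoheadrightarrow A\circ B$ is $(\Re(A\circ B))$; this is slightly cleaner to read but uses Remark~\ref{rmk:relAoB1} in the form ``the ideal of relations is exactly this'' rather than ``these elements generate it.'' Either way the content is the same.
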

\begin{proof} By (\ref{eq:relations1_A, B}),
$x_jx_i- f(x_i)x_i\in \Re_A$ and $y_by_a- \varphi(y_a)y_a \in \Re_B.$
Then, by Remark~\ref{rmk:relAoB1},
\[
\begin{array}{ll}
\psi_1 &=\sigma_{23}((x_jx_i-  f(x_i)x_i)\circ(y_by_a))\\
            &=(x_j\circ y_b)(x_i\circ y_a)- (f(x_i)\circ y_b)(x_i\circ y_a)
\in (\Re(A\circ B)),\\
\psi_2 &= \sigma_{23}(f(x_i)x_i\circ(y_by_a - \varphi(y_a)y_a))\\
       &=(f(x_i)\circ y_b)(x_i\circ y_a) -
            (f(x_i) \varphi(y_a))(x_i\circ y_a ) \in (\Re(A\circ B)).
\end{array}
\]
The elements $\psi_1$ and $\psi_2$ are in the ideal of relations  $(\Re(A\circ B))$, so the sum
\[
\begin{array}{ll}
\psi_1 + \psi_2 &=(x_j\circ y_b)(x_i\circ y_a)- (f(x_i)\circ y_b)(x_i\circ y_a)+(f(x_i)\circ y_b)(x_i\circ y_a) -
                            (f(x_i) \varphi(y_a))\circ (x_i\circ y_a )\\
                 &= (x_j\circ y_b)(x_i\circ y_a)- (f(x_i) \varphi(y_a))\circ (x_i\circ y_a )
                  = f_{jb,ia}
\end{array}
\]
is also in $(\Re(A\circ B))$. \end{proof}

\begin{thm}
\label{thm:rel_segre}
Let $(X, r_f)$ and $(Y, r_{}\varphi)$ be finite permutation idempotent solutions,  $A\circ B$ the Segre product of the YB
algebras $A= \cA(\k , X, r_f)$ and $B=\cA(\k , Y, r_{\varphi})$ and $(X\circ Y, r_{X\circ Y})$ the Cartesian product of solutions
from Lemma~\ref{pronotation}.
\begin{enumerate}
\item
The algebra $A\circ B$ is a PBW algebra with a set of $mn$ PBW one-generators $\mathcal{W}=X\circ Y$
ordered lexicographically,
\begin{equation}
\label{eq:W}
\mathcal{W} = \{w_{11}= x_1\circ y_1< w_{12}= x_1\circ y_2 <\cdots <w_{1n}= x_1\circ y_n< \cdots < w_{mn}= x_m\circ y_n\},
\end{equation}
and a finite standard presentation $A\circ B = \k \langle \mathcal{W}\rangle/(\Re)$, where
\begin{equation}
\label{eq:SegrePBW}
\quad \Re = \{F_{jb, ia}=w_{jb}w_{ia}- w_{11}w_{ia}\mid (j, b) \neq (1, 1),\  1 \leq i,j \leq m,\  1 \leq a,b \leq n\}
\end{equation}
is a set of $mn(mn-1)$ quadratic relations. The leading monomial of  each $F_{jb, ia}$ is $\LM (F_{jb, ia})= w_{jb}w_{ia}$.
Moreover,  $\Re$ is a reduced Gr\"{o}bner basis of the ideal $I = (\Re)$ of the free associative algebra $\k \langle
\mathcal{W}\rangle$.
\item
The ideal $I$ has a second set of generators, $I =(\Re(A\circ B))$, where
\begin{equation}
\label{eq:Segre1}
\begin{array}{ll}
\Re(A\circ B)= \{
                f_{jb, ia}=w_{jb}w_{ia}- \Psi(w_{ia})w_{ia} \mid 1 \leq i,j \leq m, \; 1 \leq a,b \leq n\};\\
                \Psi \in \Sym(\mathcal{W}),\quad\Psi(x_i\circ y_a)=f(x_i)\circ \varphi(y_a),\quad \forall i \in \{1, 2, \cdots,
                m\},\  a \in \{1, 2, \cdots, n\}.
\end{array}
\end{equation}
\item The Segre product $A \circ B$ is isomorphic to the Yang-Baxter algebra $\cA (\k , X\circ Y, r_{X\circ Y})$ of the
Cartesian product  $(X\circ Y, r_{X\circ Y})$.
\end{enumerate}
\end{thm}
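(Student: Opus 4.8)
The plan is to identify $A\circ B$ with the Yang-Baxter algebra $\mathfrak C:=\cA(\k,X\circ Y,r_{X\circ Y})$ of the Cartesian product solution of Lemma~\ref{pronotation}, and then read off parts (1) and (2) from Theorem~\ref{thm:YBalgPermSol}. By Lemma~\ref{pronotation}, $(X\circ Y,r_{X\circ Y})=(X\circ Y,r_{\Psi})$ is a permutation idempotent solution of order $mn$, with $\Psi(x_i\circ y_a)=f(x_i)\circ\varphi(y_a)$. Applying Theorem~\ref{thm:YBalgPermSol} to it, with the lexicographic enumeration (\ref{eq:W}) of $\mathcal W=X\circ Y$ (whose minimal element $w_{11}=x_1\circ y_1$ plays the role of ``$x_1$'') and the permutation $\Psi$ in place of ``$f$'', yields simultaneously: (a) $\mathfrak C$ has the presentation $\k\langle\mathcal W\rangle/(\Re(A\circ B))$ with $\Re(A\circ B)$ exactly the set (\ref{eq:Segre1}); (b) $\mathfrak C$ has the standard finite presentation $\k\langle\mathcal W\rangle/(\Re)$ with $\Re$ the $mn(mn-1)$ binomials of (\ref{eq:SegrePBW}), which form the reduced Gr\"obner basis of $(\Re)$, with $\LM(F_{jb,ia})=w_{jb}w_{ia}$ because $w_{11}$ is lexicographically least; and (c) $\mathfrak C$ is PBW with $\k$-basis $\{1\}\cup\{w_{11}^{\alpha}w_{ia}\mid\alpha\in\N_0\}$ and $\dim\mathfrak C_s=mn$ for every $s\ge1$.

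Next I would construct a surjective graded algebra homomorphism $\pi\colon\mathfrak C\to A\circ B$. Since $A$ and $B$ are one-generated, $(A\circ B)_s=A_s\otimes B_s$ is spanned by products $(a_1\circ b_1)\cdots(a_s\circ b_s)$ with $a_i\in A_1$, $b_i\in B_1$; hence $A\circ B$ is generated in degree one by $\mathcal W=X\circ Y$, which is a $\k$-basis of $(A\circ B)_1=A_1\otimes B_1$ (cf.\ Remark~\ref{cor:SegreProductProperties}). Let $\k\langle\mathcal W\rangle\to A\circ B$ be the (surjective, degree-preserving) algebra homomorphism sending $w_{ia}\mapsto x_i\circ y_a$. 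By Lemma~\ref{lem:segreproductrel}, each $f_{jb,ia}=w_{jb}w_{ia}-\Psi(w_{ia})w_{ia}$ maps to $0$ in $A\circ B$; since $\mathfrak C=\k\langle\mathcal W\rangle/(\Re(A\circ B))$ is precisely the quotient by the ideal these elements generate, the homomorphism factors through a surjection $\pi\colon\mathfrak C\to A\circ B$ of graded algebras.

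I would then finish by a Hilbert-function comparison. By Theorem~\ref{thm:YBalgPermSol} applied to $A$ and $B$ we have $\dim A_s=m$ and $\dim B_s=n$ for all $s\ge1$, so the Segre-product identity $h_{A\circ B}(s)=h_A(s)h_B(s)$ gives $\dim(A\circ B)_s=mn$ for all $s\ge1$, in agreement with (\ref{eq:dimensions}). Since $\dim\mathfrak C_s=mn$ as well, each graded component $\pi_s\colon\mathfrak C_s\to(A\circ B)_s$ is a surjection of equal-dimensional finite-dimensional spaces, hence bijective; so $\pi$ is an isomorphism of graded algebras. This proves part (3), and transporting along $\pi$ the structure of $\mathfrak C$ recorded in the first paragraph gives parts (1) and (2): $A\circ B\cong\k\langle\mathcal W\rangle/(\Re)\cong\k\langle\mathcal W\rangle/(\Re(A\circ B))$, with $\mathcal W$ a set of PBW one-generators and $\Re$ the reduced Gr\"obner basis of $(\Re)$.

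The one genuinely delicate point is to keep the grading of $A\circ B$ for which $(A\circ B)_s=A_s\otimes B_s$ and $\mathcal W$ lives in degree one separate from the grading it would inherit as a subalgebra of $A\otimes B$, which doubles it; all the homomorphisms above must be checked graded for the former. Once that is pinned down, everything reduces to Theorem~\ref{thm:YBalgPermSol} together with Lemmas~\ref{pronotation} and~\ref{lem:segreproductrel}. If one prefers a self-contained argument for part (1), one can instead verify directly, exactly as in the proof of Theorem~\ref{thm:YBalgPermSol}, that every ambiguity $w_{kc}w_{jb}w_{ia}$ with $(k,c)\neq(1,1)$ and $(j,b)\neq(1,1)$ reduces via $\Re$ to $w_{11}w_{11}w_{ia}\in\cN(\Re)$, so $\Re$ is a Gr\"obner basis and evidently a reduced one, and then match the $mn(mn-1)$ obstructions $w_{jb}w_{ia}$ against the $\dim(A\circ B)_2=mn$ normal words $w_{11}w_{ia}$ of length two.
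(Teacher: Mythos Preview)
Your proof is correct and uses the same ingredients as the paper's---Lemma~\ref{lem:segreproductrel}, the Hilbert function of $A\circ B$, and Theorem~\ref{thm:YBalgPermSol} applied to $(X\circ Y,r_\Psi)$---but organizes them in the reverse order. The paper works directly inside $A\circ B$: it uses the quadratic property (Remark~\ref{cor:SegreProductProperties}) to know that the ideal of relations $I$ is generated in degree~$2$, computes $\dim I_2=mn(mn-1)$ from $\dim(A\circ B)_2=mn$, shows $\Span_\k\Re\subseteq\Span_\k\Re(A\circ B)\subseteq I_2$ via the identity $F_{jb,ia}=f_{jb,ia}-f_{11,ia}$, and concludes equality by a degree-$2$ dimension count; only at the end does it identify $A\circ B$ with the YB algebra of the Cartesian product. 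You instead start from $\mathfrak C=\cA(\k,X\circ Y,r_\Psi)$, whose presentation and PBW structure are given outright by Theorem~\ref{thm:YBalgPermSol}, build the surjection $\pi\colon\mathfrak C\to A\circ B$, and promote it to an isomorphism by a Hilbert-series comparison in all degrees. Your route is slightly more economical, since the Gr\"obner basis and PBW properties come for free from Theorem~\ref{thm:YBalgPermSol} rather than being re-verified; the paper's route, on the other hand, is self-contained in degree~$2$ and does not need to invoke the full Hilbert series. Your closing remark about keeping the Segre grading straight is well taken and is exactly the care the paper takes implicitly.
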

\begin{proof}
We know from Remark~\ref{cor:SegreProductProperties} that $A\circ B$ is a one-generated quadratic algebra, and $\mathcal{W}$ is
its  set of one-generators since $\Span_\k  (\mathcal{W})= (A\circ B)_1$. Therefore, $A\circ B = \k \langle
\mathcal{W}\rangle/I$, where the ideal of relations $I$  is generated by homogeneous polynomials of degree two.
We shall prove the equality of ideals
\[I = (\Re(A\circ B))= (\Re).\]
There is an equality of vector spaces
\[
(\k \langle \mathcal{W}\rangle)_2 = I_2 \oplus (A\circ B)_2.
\]
Moreover,
\begin{equation}
\label{eq:I}
I = (I_2),\quad  \dim I_2 = (mn)^2- \dim (A\circ B)_2 = (nm)^2-nm= nm(nm-1).
\end{equation}
By Lemma~\ref{lem:segreproductrel}, one has $\Re(A\circ B) \subseteq I_2$.

Observe that each of the polynomials $F_{jb, ia}=w_{jb}w_{ia}- w_{11}w_{ia}\in \Re$  is also in the ideal $(\Re(A\circ B))$.
More precisely,
\begin{equation}
\label{eq:Fjbia}
F_{jb, ia}= f_{jb, ia} - f_{11, ia}\in (\Re(A\circ B)).
\end{equation}
Indeed, by Lemma~\ref{lem:segreproductrel},  $f_{jb,ia}, f_{11,ia}$ are in the ideal $ (\Re(A\circ B)$, and
\[f_{jb, ia} - f_{11, ia} = (w_{jb}w_{ia}- \Psi(w_{ia})w_{ia})- (w_{11}w_{ia}- \Psi(w_{ia})w_{ia}) =w_{jb}w_{ia} -w_{11}w_{ia}
=F_{jb, ia}.\]
 This implies inclusions of vector spaces
 \begin{equation}
 \label{eq:subspaces}
 I_2 \supseteq \Span_\k  \Re(A\circ B) \supseteq \Span_\k  \Re.
 \end{equation}
Note that the set $\Re$ consists of $mn(mn-1)$ linearly independent binomials.
Indeed, the polynomials $F_{jb, ia}$ have pairwise distinct leading monomials
\[\LM(F_{jb, ia}) = w_{jb}w_{ia},\quad\forall\ (j,b) \neq (1,1),\ 1 \leq i,j \leq m,\   1 \leq a, b \leq n,\]
and therefore the set of all $F_{jb, ia}$  is linearly independent. But $\{F_{jb, ia} \mid 2 \leq j \leq m, 1 \leq i \leq m, 1
\leq a,b\leq n\} = \Re$.
It follows that
\[\dim\Span _\k (\Re)= mn(mn-1) = \dim I_2\]
and therefore (\ref{eq:subspaces}) consists of equalities,
\begin{equation}
 \label{eq:subspaces1}
 I_2 = \Span_\k  \Re(A\circ B) = \Span_\k  \Re.
 \end{equation}
However, the ideal $I$ is generated by $I_2$, $I = (I_2)$, so there are equalities of ideals
\[
I = (\Re(A\circ B)) = (\Re).
\]
This proves that each of the sets $\Re$ and $\Re(A\circ B)$ determines the ideal of relations of the Segre product $A \circ B$.
To verify that $\Re$ is a Gr\"{o}bner basis of the ideal $I$, one has to check that each ambiguity $w_{kc}w_{jb}w_{ia}$ is
solvable (does not give rise to new relations). Applying replacements $w_{pq}w_{ia}\longrightarrow w_{11}w_{ia}$,  it is not
difficult to check that each ambiguity $w_{kc}w_{jb}w_{ia}$ has normal form $w_{11}w_{11}w_{ia}$. Hence, by the Diamond Lemma,
$\Re$ is a Gr\"{o}bner basis of the ideal $I=(\Re)$ of the free associative algebra $\k \langle \mathcal{W}\rangle$. It is now
obvious that the set $\Re$ is a reduced Gr\"{o}bner basis of the ideal $I$.
This proves parts (1) and (2).

Consider now the Yang-Baxter algebra $\mathfrak{A} = \cA (\k , X\circ Y, r_{X\circ Y})$ of the
Cartesian product  $(X\circ Y, r_{X\circ Y})$.  By definition, $\mathfrak{A}$ is generated by $X\circ Y = \mathcal{W}$ and has
defining relations
which coincide with $\Re(A\circ B)$, see (\ref{eq:Segre1}). Therefore, $\mathfrak{A} = \k \langle \mathcal{W}\rangle/(\Re(A\circ
B)) \cong A \circ B.$
\end{proof}

\subsection{Segre morphisms in the case of permutation idempotent solutions}
\label{Sec:SegreMaps}

Roughly speaking, to introduce an analogue of Segre morphism for Segre products of two quadratic algebras $A\circ B$, one needs a
quadratic algebra
$C$ of a type similar to the type of $A$ and $B$ and an algebra homomorphism $s: C \longrightarrow A \otimes B$, such that the
image of
$s$ is the Segre product $A\circ B$.

We keep the conventions and notation from the previous subsection, so  $(X, r_f)$ and $(Y, r_{\varphi})$ are disjoint permutation
idempotent solutions of the YBE of finite orders $m$ and $n$, respectively,
 $A =\cA(\k ,  X, r_f)$, and  $B =\cA(\k ,  Y, r_{\varphi})$  are the corresponding Yang-Baxter algebras.
We fix enumerations  (\ref{XYenum}) as before and, as in  Convention~\ref{rmk:conventionpreliminary1}, we consider the
degree-lexicographic orders on the free monoids $\langle X\rangle$, and $\langle Y\rangle$ extending these enumerations. The
Segre product of $A\circ B$ has set of one-generators $\mathcal{W}$ ordered as in (\ref{eq:W}) and $(X\circ Y, r_{X\circ Y})$ is
the solution isomorphic to the Cartesian product $(X\times Y, \rho_{X\times Y})$ in Lemma~\ref{pronotation}.

\begin{dfn}
\label{def:Z}
Let $Z=\{z_{11}, z_{12}, \cdots, z_{mn}\}$ be a set of order $mn$, disjoint with $X$ and $Y$, and  define
\[r_\Phi: Z\times Z\longrightarrow Z\times Z,\quad
r_\Phi(z_{jb},z_{ia})=(\Phi(z_{ia}), z_{ia});\quad \Phi\in \Sym(Z),\quad  \Phi(z_{ia})= z_{pq}\;\text{iff}\;
f(x_i)= x_p,\; \varphi(y_a)= y_q.
\]
as the permutation idempotent solution induced canonically from
$(X\circ Y, r_{X\circ Y})$.
\end{dfn}
We adopt the degree-lexicographic order on the free monoid $\langle Z\rangle$ induced by the enumeration of $Z$, where
\[Z= \{z_{11}< z_{12}< \cdots <  z_{mn}\}.\]
\begin{rmk}
\label{remark}
Let  $A_Z= \cA(\k ,  Z, r_{\Phi})$ be the  Yang-Baxter algebra of the permutation solution $(Z,r_{\Phi})$.
Then, by Theorem~\ref{thm:YBalgPermSol},  $A_Z =\k \langle Z\rangle/(\Re(A_Z))$, where the ideal of relations of $A_Z$
is generated by the set $\Re(A_Z)$ consisting of $mn(mn-1)$
quadratic binomial relations
\begin{equation}
\label{eq:viprelationsZ1}
\gamma_{jb,ia}= z_{jb}z_{ia}- z_{11}z_{ia},\quad (j,b)\neq (1, 1),\ 1 \leq i,j\leq m,\ 1 \leq a,b\leq n. \\
\end{equation}
Every relation $\gamma_{jb,ia}$ has leading monomial  $\LM (\gamma_{jb,ia})=z_{jb}z_{ia}$.
\end{rmk}

By definition, $A\circ B$ is a subalgebra of $A\otimes B$. So if an equality holds in $A\circ B$ then it holds in $A\otimes B.$
\begin{lem}
    \label{lem:segremap}
In notation as above, let $(Z, r_\Phi)$ be the permutation idempotent solution of order $mn$ in Definition~\ref{def:Z} and let
$A_Z= \cA(\k ,  Z, r_\Phi)$ be its  YB algebra.
 The  assignment
 \[z_{11} \mapsto x_1\otimes y_1,\quad z_{12} \mapsto x_1\otimes y_2,\quad \cdots,\quad z_{mn} \mapsto x_m\otimes y_n \]
extends to an algebra homomorphism $s_{m, n} : A_Z \longrightarrow A\otimes_{\k } B$.
    \end{lem}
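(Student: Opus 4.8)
The plan is to invoke the universal property of the free associative algebra. Since $A_Z = \k\langle Z\rangle/(\Re(A_Z))$, it suffices to exhibit an algebra homomorphism $\tilde s\colon \k\langle Z\rangle \to A\otimes_\k B$ with $z_{ia}\mapsto x_i\otimes y_a$ and then verify that $\tilde s$ annihilates every generator of the defining ideal $(\Re(A_Z))$; the map $\tilde s$ then factors through the quotient and yields the claimed homomorphism $s_{m,n}$.

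First I would recall two facts. The multiplication in $A\otimes_\k B$ is componentwise, $(u\otimes v)(u'\otimes v') = uu'\otimes vv'$. And, applying Theorem~\ref{thm:YBalgPermSol} to $(X,r_f)$ and to $(Y,r_\varphi)$ with the standard presentations, we have $x_jx_i = x_1x_i$ in $A$ for all $1\le i,j\le m$ (trivially for $j=1$, and by the relations $\Re_1$ for $j\ge 2$), and likewise $y_by_a = y_1y_a$ in $B$ for all $1\le a,b\le n$.

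Next, for any $(j,b)\ne(1,1)$ with $1\le i,j\le m$ and $1\le a,b\le n$, I would compute
\[
\tilde s(z_{jb}z_{ia}) = (x_j\otimes y_b)(x_i\otimes y_a) = x_jx_i\otimes y_by_a = x_1x_i\otimes y_1y_a = (x_1\otimes y_1)(x_i\otimes y_a) = \tilde s(z_{11}z_{ia}),
\]
so that $\tilde s(\gamma_{jb,ia}) = 0$ for every relation $\gamma_{jb,ia} = z_{jb}z_{ia}-z_{11}z_{ia}$ in Remark~\ref{remark}. Since these $\gamma_{jb,ia}$ generate the ideal of relations of $A_Z$, the two-sided ideal $(\Re(A_Z))$ is contained in $\ker\tilde s$, and therefore $\tilde s$ descends to an algebra homomorphism $s_{m,n}\colon A_Z\to A\otimes_\k B$ sending $z_{ia}\mapsto x_i\otimes y_a$, as desired.

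There is no genuine obstacle: the lemma is a routine relation-checking argument, and the only point needing attention is to use the right presentations of $A$ and $B$. If one prefers to use the original binomial presentations $\Re_A,\Re_B$ instead, the computation reads $\tilde s(z_{jb}z_{ia}) = f(x_i)x_i\otimes \varphi(y_a)y_a = (f(x_i)\otimes\varphi(y_a))(x_i\otimes y_a) = \tilde s(\Phi(z_{ia})z_{ia})$, which annihilates the equivalent generating set $z_{jb}z_{ia}-\Phi(z_{ia})z_{ia}$ of $(\Re(A_Z))$; this is the form of the map that will be convenient when computing the image of $s_{m,n}$ and identifying it with the Segre product $A\circ B$ in the subsequent results.
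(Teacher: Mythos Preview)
Your proof is correct and follows essentially the same approach as the paper: both verify that the defining relations $\gamma_{jb,ia}$ of $A_Z$ are sent to zero in $A\otimes_\k B$, so the map on the free algebra descends to the quotient. The only cosmetic difference is that the paper observes $s_{m,n}(\gamma_{jb,ia})=F_{jb,ia}$ and then invokes Theorem~\ref{thm:rel_segre} to conclude $F_{jb,ia}=0$ in $A\circ B\subset A\otimes B$, whereas you compute directly from the relations $x_jx_i=x_1x_i$ in $A$ and $y_by_a=y_1y_a$ in $B$; your route is slightly more self-contained.
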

    \begin{proof}
We set
$s_{m,n} (z_{i_1a_1}\cdots z_{i_pa_p}): =(x_{i_1}\circ y_{a_1})\cdots   (x_{i_p}\circ y_{a_p})$, for all
words $z_{i_1a_1}\cdots z_{i_pa_p}\in \langle Z\rangle$ and then extend this
map linearly.
    Note that for each polynomial $\gamma_{jb, ia}\in \Re(A_Z)$  given in (\ref{eq:viprelationsZ1})
one has
    \[s_{n,d}(\gamma_{jb, ia}) = F_{jb,ia}\in \Re,\]
where $\Re$ is the set of relations of the Segre product $A\circ B$ given in Theorem~\ref{thm:rel_segre}, see
(\ref{eq:SegrePBW}).

Since $F_{jb,ia}$ equals identically zero in $A\circ B=\bigoplus_{i \geq 0}  A_i\otimes_{\k } B_i$, which is a subalgebra of
$A\otimes B$,
    it follows that $s_{n,d}(\gamma_{jb, ia}) =F_{jb,ia}= 0$ in  $A\otimes B$.
Therefore the map $s_{m,n}$ agrees
   with the relations of the algebra $A_Z$.
It follows that the map $s_{m, n} : A_Z \longrightarrow A\otimes_{\k } B$
is a well-defined homomorphism of algebras.
\end{proof}

\begin{dfn}
\label{def:segremap}
The map $s_{m, n} : A_Z \longrightarrow A\otimes_{\k } B$ in Lemma~\ref{lem:segremap} is called \emph{the $(m,n)$-Segre map}.
\end{dfn}

\begin{cor}
\label{thm:segremap}
As above, let  $(X, r_f)$, $(Y, r_{\varphi})$  be finite permutation idempotent  solutions on disjoint sets $X = \{x_1, \cdots,
x_m\}$, $Y = \{y_1, \cdots, y_n\}$ and $A=\cA(\k,X,r_f)$, $B=\cA(\k,Y,r_\varphi)$. Let $(Z,r_\Phi)$  be the solution on $Z =
\{z_{11}, \cdots, z_{mn}\}$ in Definition~\ref{def:Z} and  $A_Z= \cA(\k ,  Z, r_\Phi)$.  The image of the Segre map
$s_{m, n} : A_Z \longrightarrow A\tens_\k B$ is the Segre product $A\circ B$. Moreover, the Segre map is an isomorphism of graded
algebras $s_{m, n} : A_Z \longrightarrow A\circ B$.
\end{cor}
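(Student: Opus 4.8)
The plan is to check that $s_{m,n}$ is a morphism of graded algebras \emph{onto} $A\circ B$, and then that it is bijective, the latter being essentially free from the results already in place. First I would observe that $s_{m,n}$ lands in $A\circ B$ and is graded for the grading $(A\circ B)_d = A_d\otimes_\k B_d$: by construction it sends a length-$d$ monomial $z_{i_1a_1}\cdots z_{i_da_d}$ to $(x_{i_1}\circ y_{a_1})\cdots(x_{i_d}\circ y_{a_d})\in A_d\otimes_\k B_d = (A\circ B)_d$, and such monomials span $(A_Z)_d$; hence $s_{m,n}((A_Z)_d)\subseteq (A\circ B)_d$. Surjectivity onto $A\circ B$ is then immediate: by Remark~\ref{cor:SegreProductProperties} (see also the proof of Theorem~\ref{thm:rel_segre}) the algebra $A\circ B$ is generated in degree one by $\mathcal W = X\circ Y$, and every $x_i\circ y_a = s_{m,n}(z_{ia})$ lies in the image, so $\operatorname{im}s_{m,n}=A\circ B$.

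For injectivity I would argue via the identification of solutions. By Definition~\ref{def:Z} the relabeling $z_{ia}\mapsto x_i\circ y_a$ is an isomorphism of braided sets $(Z,r_\Phi)\xrightarrow{\ \sim\ }(X\circ Y, r_{X\circ Y})$, because $\Phi$ was defined precisely to match $\Psi$ under this relabeling (compare Definition~\ref{def:Z} with \eqref{eq:psi}). An isomorphism of solutions induces an isomorphism of the associated Yang--Baxter algebras, so $A_Z = \cA(\k,Z,r_\Phi)\cong \cA(\k, X\circ Y, r_{X\circ Y})$ via the map fixing generators $z_{ia}\mapsto w_{ia}$; and by Theorem~\ref{thm:rel_segre}(3) the latter algebra is isomorphic to $A\circ B$, the isomorphism being again the identity on the $w_{ia}$, which are realised inside $A\otimes_\k B$ as $x_i\circ y_a$. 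The resulting isomorphism $A_Z\to A\circ B$ therefore sends $z_{ia}\mapsto x_i\circ y_a$, and since two algebra homomorphisms out of $A_Z$ agreeing on the generating set $Z$ coincide, this isomorphism equals $s_{m,n}$ (cf. the proof of Lemma~\ref{lem:segremap}). Hence $s_{m,n}\colon A_Z\to A\circ B$ is an isomorphism of graded algebras. As a cross-check one may finish purely by a dimension count: $s_{m,n}\colon (A_Z)_d\to (A\circ B)_d$ is surjective, and by Theorem~\ref{thm:YBalgPermSol}(3) applied to the order-$mn$ solution $(Z,r_\Phi)$ together with \eqref{eq:dimensions} one has $\dim (A_Z)_d = mn = \dim (A\circ B)_d$ for all $d\ge 1$ (both algebras being $\k$ in degree $0$), so a surjection between equal finite dimensions is bijective in each degree.

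There is no genuine obstacle here; the substantive work was done in Theorem~\ref{thm:rel_segre} and Lemma~\ref{lem:segremap}. The only points needing care are (i) verifying that $s_{m,n}$ actually takes values in $A\circ B$ and respects the doubled grading, and (ii) ruling out that $\ker s_{m,n}$ is strictly larger than the defining ideal of $A_Z$; both are settled by the presentation $A\circ B = \k\langle\mathcal W\rangle/(\Re)$ from Theorem~\ref{thm:rel_segre} together with the relation-matching $s_{m,n}(\gamma_{jb,ia}) = F_{jb,ia}$ from the proof of Lemma~\ref{lem:segremap}, or equivalently by the Hilbert-function equality just noted.
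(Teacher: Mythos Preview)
Your proposal is correct and follows exactly the route the paper intends: the corollary is stated without proof because it is immediate from Lemma~\ref{lem:segremap} (well-definedness and relation-matching $s_{m,n}(\gamma_{jb,ia})=F_{jb,ia}$) together with Theorem~\ref{thm:rel_segre} (presentation of $A\circ B$ and its identification with the YB-algebra of $(X\circ Y,r_{X\circ Y})$), and either the isomorphism-of-solutions argument or the Hilbert-function equality $\dim(A_Z)_d=mn=\dim(A\circ B)_d$ finishes injectivity. Your write-up makes these implicit steps explicit and is the expected argument.
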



\section{Noncommutative differential calculus on $\cA(\k,n)$}\label{secncg}

Because we have proven that $\cA(\k, X, r_f)$ is independent of $f$ up to isomorphism, we have a canonical representative given
by $f=\id$ for the Yang-Baxter algebras associated to the class of permutation idempotent solutions. This is a quadratic algebra
$A=\cA(\k, n)$ with generators $x_1,\cdots,x_n$ where $n=|X|$ and relations from Theorem~\ref{thm:YBalgPermSol} which we write
equivalently as
\[  x_j x_p= x_p^2,\quad  j\ne p,\quad 1\le p\le n.  \]
Here, we add to the algebraic-geometric properties of this algebra in previous sections some first results about their
noncommutative differential geometry.

\subsection{Recap of noncommutative differentials}

Many noncommutative unital algebras $A$ do not admit sufficiently many derivations $A\to A$ to play the role of the classical
notion of partial differentials. Instead, the notion of a derivation on $A$ is naturally generalised to the following data.

\begin{dfn} \label{defcalc} Given a unital algebra $A$ over $\k$, a first order differential calculus means a pair
$(\Omega^1,\extd)$, where
\begin{enumerate}
\item $\Omega^1$ is an $A$-bimodule;
\item $\extd: A\to \Omega^1$ is a derivation in the sense $\extd(ab)=(\extd a)b+a\extd b$ for all $a,b\in A$;
\item The map $A\tens A\to \Omega^1$ sending $a\tens b\mapsto a\extd b$ is surjective.
\end{enumerate}
Here necessarily $\k.1\subseteq \ker \extd$, and $(\Omega^1,\extd)$ is called {\em connected} if $\ker\extd=\k.1$.
\end{dfn}

Given a first order calculus, there is a maximal extension to a differential graded algebra $(\Omega_{max},\extd)$, see
\cite[Lem.~1.32]{BeggsMajid}, with other differential graded algebras $(\Omega,\extd)$ over $A$ with the same $\Omega^1$ a
quotient of this. We recall that $\Omega$ here is a graded algebra with product denoted $\wedge$, $\Omega^0=A$ and $\extd$ is a
graded derivation with $\extd^2=0$.

\begin{rmk} A connected first order calculus always exists, namely there is a universal construction $\Omega^1_{uni}\subset
A\tens A $ defined as the kernel of the product with $\extd_{uni} a=1\tens a - a\tens 1$. Any other first order calculus a
quotient of this by an $A$-sub-bimodule. Also note that first order calculi are similar to the Kahler differential for
commutative algebras and have been used since the 1970s, for example in the works of Connes, Quillen and others.
\end{rmk}

\begin{lem} The quadratic algebra $\cA(\k,n)$ for $n\ge 2$ does not admit any derivations that lower the degree by 1, other than
the zero map.
\end{lem}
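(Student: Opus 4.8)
The plan is to prove that any such derivation $D$ must annihilate every generator $x_i$, and then to deduce $D=0$ from the fact that $\cA(\k,n)$ is generated in degree one. First I would record that, since $D$ decreases degree by one and $\cA_0=\k$, we have $D(x_i)\in\cA_0=\k$ for each $i$; write $D(x_i)=c_i\in\k$. I would then apply $D$ to the defining relations $x_jx_p=x_p^2$ with $j\ne p$. The Leibniz rule gives $D(x_jx_p)=c_jx_p+c_px_j$ and $D(x_p^2)=2c_px_p$, so that the identity $c_jx_p+c_px_j=2c_px_p$ must hold in $\cA_1$.

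By Theorem~\ref{thm:YBalgPermSol} the space $\cA_1$ has $\{x_1,\dots,x_n\}$ as a basis, so $x_j$ and $x_p$ are linearly independent whenever $j\ne p$; comparing the coefficient of $x_j$ in this identity forces $c_p=0$. Since $n\ge 2$, for every index $p$ there is at least one index $j\ne p$, so the argument applies to each $p$ and yields $c_1=\dots=c_n=0$, i.e.\ $D$ vanishes on all generators. (One could equally apply $D$ to the relations $x_jx_p-x_1x_p$ of \eqref{eq:rels1}; the same linear independence in $\cA_1$ gives the same conclusion.)

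Finally, since $\cA(\k,n)$ is generated by $x_1,\dots,x_n$ and $D$ satisfies the Leibniz rule, $D$ vanishes on the subalgebra they generate, which is all of $\cA(\k,n)$; concretely one inducts on $d$ using $\cA_d=\cA_1\cdot\cA_{d-1}$ and $D(uv)=(Du)v+u(Dv)$, both factors vanishing once $D$ is known to be zero on $\cA_1$ and on $\cA_{d-1}$. Hence $D=0$. I do not expect any genuine obstacle here: the only place the hypothesis $n\ge 2$ is used is in guaranteeing, for each $p$, the existence of an index $j\ne p$ so that a relation of the form $x_jx_p=x_p^2$ is actually available — for $n=1$ there are no relations, $\cA(\k,1)=\k[x_1]$, and $\partial/\partial x_1$ is a nonzero degree $-1$ derivation.
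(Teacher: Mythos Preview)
Your proof is correct and follows essentially the same approach as the paper's: apply the Leibniz rule to a quadratic relation $x_jx_p=x_p^2$ and use linear independence of the generators in $\cA_1$ to force the constants $D(x_i)$ to vanish. Your version is in fact more careful than the paper's, which only treats the pair $x_1,x_2$ explicitly and omits the routine induction extending $D=0$ from the generators to all of $\cA(\k,n)$.
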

\proof Let $D$ be degree lowering $D: A_i \to A_{i-1}$ and obey $D(ab)=aD(b) + D(a)b$ for all
$a, b \in A$. Then $D(x_1) = \alpha, D(x_2) = \beta$ for some $\alpha,\beta\in\k$. Hence $D(x_2x_1)= D(x_2) x_1 + x _2D(x_1) =
\beta x_1 + \alpha x_2.$ But $x_2x_1= x_1^2$ in $A$ and $D(x_1^2) = 2 \alpha x_2$, so $\beta x_1 + \alpha x_2 = 2 \alpha x_2$ and
hence $\alpha = \beta =0$ as $x_1,x_2$ are linearly independent.
 \endproof

We therefore do need a more general concept such as that of a first order differential calculus. For any quadratic algebra with
$n$ generators $x_1,\cdots,x_n$, a sufficient (but not necessary) construction for an $(\Omega^1,\extd)$ that reduces as expected
in the case of $\k[x_1,\cdots,x_n]$ is as follows.

\begin{pro}\label{procalc} Let $A$ be a quadratic algebra on generators $\{x_i\}_{i=1}^n$ and let $\rho: A\to M_n(A)$ be an
algebra map such that
\begin{equation} \label{rho2} \sum_{i,j} r_{ij}( \rho^j{}_{ik}+ x_i\delta_{jk}) =0\quad\forall k\quad {\rm if}\quad
\sum_{i,j}r_{ij}x_ix_j=0, \end{equation}
where  $\rho^j{}_{ik}\in A$ are the matrix entries of $\rho(x_j)$ and $\delta_{jk}$ is the Kronecker $\delta$-function. Then

(1) $\Omega^1$ defined as a free left $A$-module with basis $\extd x_i$ and right module structure
\[   (a\extd x_i)b:= \sum_k (a\rho(b)_{ik})\extd x_k\]
is an $A$-bimodule.

(2) $\extd: A\to \Omega^1$ defined by  $\extd (1)=0, \extd(x_i)=\extd x_i$ extended as a derivation makes $(\Omega^1,\extd)$ into
a first order calculus.

(3) Partial derivatives $\del_i:A\to A$ defined  by
\begin{equation}\label{deli} \extd a= \sum_i (\del_i a)\extd x_i\end{equation}
for all $a\in A$ obey the twisted derivation rule
\begin{equation}\label{del2} \del_i(ab)=\sum_{j}\del_j(a) \rho(b)_{ji}+a\del_i(b)\end{equation}
for all $a,b\in A$.
\end{pro}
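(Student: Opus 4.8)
The plan is to verify the three claims directly by checking the bimodule axioms, the Leibniz rule, and the consistency with the quadratic relations. The only subtle point is where condition \eqref{rho2} is actually needed.

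First I would establish part (1). Since $\Omega^1$ is free as a left $A$-module on $\{\extd x_i\}$, the left module structure is automatic; I need to check that the proposed right action is well-defined and associative, and that the two actions commute. Well-definedness of $(a\extd x_i)b:=\sum_k(a\rho(b)_{ik})\extd x_k$ as a right action requires $\rho$ to be an algebra homomorphism: for $b,c\in A$ one computes $((a\extd x_i)b)c=\sum_{k}(a\rho(b)_{ik})(\extd x_k)c=\sum_{k,l}(a\rho(b)_{ik}\rho(c)_{kl})\extd x_l=\sum_l(a(\rho(b)\rho(c))_{il})\extd x_l=\sum_l(a\rho(bc)_{il})\extd x_l=(a\extd x_i)(bc)$, using $\rho(bc)=\rho(b)\rho(c)$; and $(a\extd x_i)1=\sum_k(a\rho(1)_{ik})\extd x_k=a\extd x_i$ since $\rho(1)=\mathrm{Id}$. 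Left-right compatibility $(c\cdot(a\extd x_i))\cdot b=c\cdot((a\extd x_i)\cdot b)$ is immediate because the right action only multiplies on the right of the coefficient. Crucially, however, $\Omega^1$ is a bimodule over $A=\k\langle x_i\rangle/(\Re)$, not over the free algebra, so I must check that the right action descends to the quotient: for each relation $\sum_{ij}r_{ij}x_ix_j=0$ in $A$ and each generator $x_i$, I need $(\extd x_i)\big(\sum_{jk}r_{jk}x_jx_k\big)=0$ in $\Omega^1$. Expanding, $(\extd x_i)(\sum_{jk}r_{jk}x_jx_k)=\sum_{jk}r_{jk}(\extd x_i)(x_jx_k)=\sum_{jk,l}r_{jk}\rho(x_j)_{il}\rho(x_k)_{l\,\cdot}\cdots$ — this is where the algebra-homomorphism property again does the work, reducing it to $\sum_{jk}r_{jk}\rho(x_jx_k)=\rho(\sum_{jk}r_{jk}x_jx_k)=\rho(0)=0$. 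So part (1) needs only that $\rho$ is an algebra map; condition \eqref{rho2} is not yet used. I would present this carefully since the distinction between the free algebra and $A$ is the main place an error could hide.

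Next, part (2): $\extd$ is by construction defined on generators and extended as a derivation, so the Leibniz rule holds by fiat on words, but one must check this extension is consistent, i.e. $\extd$ annihilates the ideal of relations, so that it descends to a map $A\to\Omega^1$. For a relation $\sum_{ij}r_{ij}x_ix_j=0$ the Leibniz rule gives $\extd(\sum_{ij}r_{ij}x_ix_j)=\sum_{ij}r_{ij}\big((\extd x_i)x_j+x_i\extd x_j\big)=\sum_{ij}r_{ij}\big(\sum_k\rho(x_j)_{ik}\extd x_k+x_i\extd x_j\big)=\sum_k\big(\sum_{ij}r_{ij}(\rho^j{}_{ik}+x_i\delta_{jk})\big)\extd x_k$, which vanishes in $\Omega^1$ (a free left module on the $\extd x_k$) precisely because of hypothesis \eqref{rho2}. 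This is the one and only place \eqref{rho2} enters. Surjectivity of $a\tens b\mapsto a\,\extd b$ onto $\Omega^1$ is clear since the $\extd x_i$ generate $\Omega^1$ as a left module and $\extd x_i=1\cdot\extd x_i$ is in the image.

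Finally, part (3): the partials $\del_i$ are well-defined by \eqref{deli} because $\{\extd x_i\}$ is a left $A$-basis of $\Omega^1$, so the coefficients in $\extd a=\sum_i(\del_i a)\extd x_i$ are unique. For the twisted Leibniz rule, apply $\extd$ to $ab$ and use the derivation property: $\extd(ab)=(\extd a)b+a\,\extd b=\big(\sum_j(\del_j a)\extd x_j\big)b+a\sum_i(\del_i b)\extd x_i=\sum_j(\del_j a)\sum_i\rho(b)_{ji}\extd x_i+\sum_i a(\del_i b)\extd x_i=\sum_i\big(\sum_j(\del_j a)\rho(b)_{ji}+a\del_i b\big)\extd x_i$, and comparing coefficients of $\extd x_i$ (again using the basis property) gives \eqref{del2}. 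I would note that the main conceptual obstacle throughout is simply bookkeeping the passage between the free algebra and $A$, i.e. checking that every construction descends to the quotient by $(\Re)$; this is where \eqref{rho2} is used for $\extd$ and where the multiplicativity of $\rho$ is used for $\Omega^1$. No genuinely hard estimate or combinatorial argument is required; it is a matter of organising these verifications cleanly.
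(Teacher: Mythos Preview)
Your proposal is correct and follows essentially the same approach as the paper's own proof: both verify the right action using multiplicativity of $\rho$, check that $\extd$ is well-defined on the quotient via condition \eqref{rho2} by computing $\extd(x_ix_j)=\sum_k(\rho^j{}_{ik}+x_i\delta_{jk})\extd x_k$, and read off the twisted Leibniz rule for $\del_i$ by expanding $\extd(ab)$. Your write-up is in fact slightly more explicit than the paper's about why the right action descends to the quotient and about surjectivity of $a\otimes b\mapsto a\,\extd b$, but these are the same verifications.
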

\begin{proof}
By definition, $\Omega^1=A\tens V$, where $V$ has basis which we denote $\{\extd x_i\}$, or equivalently $\Omega^1=\Span_A\{\extd
x_1,\cdots,\extd x_n\}$ with the $\extd x_i$ a left basis. The left action is by left multiplication by $A$, so $a(b\extd x_i):=
(ab)\extd x_i$. The right action stated is indeed an action as
\[ ((a\extd x_i).b).c=\sum_j ((a\rho(b)_{ij})\extd x_j).c=\sum_{j,k}(a\rho(b)_{ij}\rho(c)_{jk})\extd x_k=
\sum_{j}(a\rho(bc)_{ij})\extd x_j=(a\extd x_i).(bc).\] By construction, these  form a bimodule. Note that as $A$ here is
quadratic, an algebra map $\rho:A\to M_n(A)$ amounts to $\rho^j:=\rho(x_j)\in M_n(A)$ for $j=1,\cdots,n$ with entries
$\rho^j{}_{ik}\in A$
such that
\begin{equation} \label{rho1} \sum_{i,j} r_{ij} \rho^i \rho^j =0\quad {\rm if}\quad \sum_{i,j}r_{ij}x_ix_j=0,  \end{equation}
and the resulting bimodule is characterised by the bimodule relations
\begin{equation}\label{bimrel}  \extd x_i\ x_j:= \sum_k \rho^j{}_{ik} \extd x_k.\end{equation}

Next, we suppose (\ref{rho2}) and define $\extd: A\to \Omega^1$ as stated. This is well defined as a bimodule derivation since
\[ \extd(x_ix_j)=(\extd x_i)x_j+ x_i\extd x_j=\sum_k (\rho^j{}_{ik} + x_i\delta_{jk} )\extd x_k\]
under our assumption. As the algebra is quadratic, this implies that $\extd$ is well defined on all of $A$.

For the last part, we note that
\[ \extd (a b)=\sum_j \del_j(a)\extd x_j b+ a\sum_i\del_i(b)\extd x_i=\sum_i(\sum_{j}\del_j(a) \rho(b)_{ji}+a\del_i(b))\extd
x_i,\]
which implies the stated property of the $\del_i$ as the $\extd x_i$ are a left basis.
\end{proof}

Note that there is no implication that $\{\extd x_i\}$  are also a right basis, and they will not be in our examples below. In a
geometric context, we could still expect $\Omega^1$ to be right-projective but we do not require or prove this.

\subsection{Differential calculi for $\cA(\k,2)$}\label{secA2calc}

Here, we apply Proposition~\ref{procalc} in the simplest nontrivial case. For calculations, we assume that $\k$ is not
characteristic 2. For $n=2$ we have 2 generators $x=x_1$ and $y=x_2$ with relations
\[ (x-y)x=0,\quad (x-y)y=0\]
which is symmetric between the two generators.  We solve for matrices $\rho^1$ and $\rho^2$ obeying (\ref{rho1})-(\ref{rho2}) and
note that the latter implies the general form
\[ \rho^1=\begin{pmatrix} e & f \\ e+x-y & f\end{pmatrix},\quad \rho^2=\begin{pmatrix} g& h+ y-x\\ g & h\end{pmatrix}\]
for some elements $e,f,g,h\in A$. The former then becomes
\[ \left(\begin{pmatrix} e & f \\ e+x-y & f\end{pmatrix}-\begin{pmatrix} g& h+ y-x\\ g & h\end{pmatrix}\right)\begin{pmatrix} e &
f \\ e+x-y & f\end{pmatrix}=0,\]
\[ \left( \begin{pmatrix} e & f \\ e+x-y & f\end{pmatrix}-\begin{pmatrix} g& h+ y-x\\ g & h\end{pmatrix}\right)\begin{pmatrix} g&
h+ y-x\\ g & h\end{pmatrix}=0.\]
These matrix equations  lead to only four independent equations among the entries, namely
\begin{equation}\label{ef1} (e-g + f-h+z)f=(e-g+f-h+z)g=0,\end{equation}
\begin{equation}\label{ef2} fh-h^2+z h+ (e-g)(h-z)= ge-e^2- ze+ (h-f)(e+z)=0,\end{equation}
where $z:=x-y$ is a shorthand.

The simplest class of solutions of these is to assume that $e,f,g,h\in A^+$, i.e., have all their terms of strictly positive
degree (so each term has a left factor of $x$ or $y$) and that $h-e, f-g$ are each divisible by $z$ as a right factor. Then the
first two  equations are automatic as is the sum of the latter two. All that remains is their difference, which reduces to
\[ (h+e-f-g)z=0.\]
These requirements do not have a unique solution, but the lowest degree solution is to take $e,f,g,h$ to be degree 1 with
\[ f=\lambda x+ (1-\lambda)y,\quad g=\mu x+ (1-\mu)y,\quad e=\alpha x+ (1-\alpha)y,\quad h=\beta x+ (1-\beta)y\]
for parameters $\lambda,\mu,\alpha,\beta\in \k$. The result can be written compactly as
\[ \rho^i{}_{jk}=y+(\epsilon_{ij}\delta_{ik}+C_{ik})z,\quad C=\begin{pmatrix} \alpha & \lambda \\ \mu &\beta\end{pmatrix},\]
where $\epsilon_{ij}$ is the antisymmetric function with $\epsilon_{12}=1=-\epsilon_{21}$ and other entries zero.  The
 bimodule relations are
\[ \extd x\ x=(y+\alpha z)\extd x+ (y+\lambda z)\extd y,\quad \extd x\ y=(y+\mu z)\extd x+ (y+(\beta-1) z)\extd y,\]
\[ \extd y\ x= (y+(\alpha+1) z)\extd x+ (y+\lambda z)\extd y,\quad \extd y\ y =(y+\mu z)\extd x+ (y+\beta z)\extd y.\]
Note that these relations are symmetric between $x,y$ iff
\[ \beta=1-\alpha,\quad \mu=1-\lambda,\]
so we have a 2-parameter family of these. Also note that $\extd z\ x=-z\extd x$  and hence that $(\extd x-\extd y)a=0$ for any
$a\in A$ with all terms of degree $\ge 2$. Hence $\extd x,\extd y$ are never a right-basis.

We also have $(\Omega_{max},\extd)$ given by applying $\extd$ to the bimodule relations with  $\extd^2=0$ and the graded
derivation rule. This is given in degree 2 by the relations
\[  \lambda \extd x \wedge\extd y+(1-\alpha)\extd y\wedge\extd x+(\alpha+1)\extd x\wedge\extd x+(1-\lambda)\extd y\wedge\extd
y=0, \]
\[  \beta\extd x \wedge\extd y+(1-\mu)\extd y\wedge\extd x+\mu\extd x\wedge\extd x+(2-\beta)\extd y\wedge\extd y=0. \]

\begin{ex}\label{calcsym} For a concrete ($x-y$ symmetric) example, we can take
\[ f=e=x, \quad g=h=y,\quad C=\begin{pmatrix}1 & 1\\ 0& 0\end{pmatrix}. \]
The bimodule relations are then
\[\extd x\ x=x(\extd x+\extd y) ,\quad \extd y\ x=(2x-y)\extd x + x\extd y,\quad \extd x\ y=y\extd x+ (2y-x)\extd y,\quad \extd
y\ y=y(\extd x+\extd y).\]
 Next, by iterating the bimodule relations, one finds
\[ \extd x_i \begin{cases} x^m \\ y^m\end{cases}= 2^{m-2}\begin{cases}(3 x^m-y^m)\extd x+ 2 x^m \extd y \\
2 y^m \extd x+ (3y^m-x^m)\extd y\end{cases},\quad \forall m\ge 2\]
independently of $i$. The partials can then be computed by
iterating the Leibniz rule for $\extd$ using these relations (or from (\ref{del2})), to find
\[\del_i (1)=0,\quad \del_i (x_j)=\delta_{ij},\quad \del_i (x_i^m)=(3\ 2^{m-1}-1)x_i^{m-1}- (2^{m-2}-1)x_{\bar i}^{m-1},\quad
\del_i (x_{\bar i}^m)=(2^{m-1}-1)x_{\bar i}^{m-1},\]
for $m\ge 2$, where $x_{\bar i}$ denotes the other generator from $x_i$. As $A=\k1\oplus\k[x]^+\oplus\k[y]^+$, this specifies the
linear maps $\del_i$. They lower degree  by 1  but are not derivations. From these formulae, it follows easily that
$\del_i(a)=0$ implies $a\in \k 1$, and hence that the calculus is connected. The relations of $(\Omega_{max},\extd)$ are
\[ \extd x\wedge\extd y=-2\extd x\wedge\extd x,\quad \extd y\wedge\extd x= -2\extd y\wedge\extd y.\]
\end{ex}

\subsection{Monoid-graded differential calculus for $\cA(\k,n)$}\label{secmoncalc}

 As $\cA(\k,n)$ is the algebra of a monoid $S=S(X,r_{\id})$, it is necessarily a cocommutative bialgebra and one can ask for
 translation-covariant calculi with respect to this. Explicitly, the comultiplication and counit of $A=\k S$ for a monoid $S$ are
 $\Delta(s)=s\tens s$ and $\epsilon(s)=1$ for all $s\in S$ and translation-covariance amounts to $(\Omega^1,\extd)$ admitting the
 diagonal grading where the grades of $s,\extd s$ are $s$. The prescription for such first order calculi in \cite[Thm.
 1.47]{BeggsMajid} in the group algebra case can still be applied, namely we start with a right action of $S$ on a vector space
 $V$ and an element $\theta\in V$ and define $\tilde\Omega^1=A\tens V$ as a free left module. The left action, right action and
 $\extd$ are
 \[s.(t\tens v)=st\tens v,\quad  (t\tens v).s=ts\tens v.s,\quad \extd s=s\tens (\theta.s-\theta),\quad\forall s,t\in S,\]
 which gives a generalised calculus $(\tilde\Omega^1,\extd)$ in the sense of dropping condition (3) of Definition~\ref{defcalc}.
 We then define an actual calculus $\Omega^1\subseteq\tilde\Omega^1$ as the image of the map $a\tens b\mapsto a\extd b$ for
 $a,b\in A$. In our case, omitting $\tens$ and giving a more explicit treatment with $V=\k^n$  as row vectors, we have the
 following.

(a) Natural $n\times n$ matrix representations of $\cA(\k,n)$ that separate $x_i$ are of the form
\[ x_i\mapsto \rho_i=\xi\tens u_i,\quad u_i\cdot\xi=1+\mu,\]
where $\xi$ is an $n\times 1$ column vector and $u_i$ are distinct $1\times n$ row vectors, all with entries in $\k$, and
$\mu\in\k$. We let $\theta$ be another row vector and suppose that
\[ e_i=\theta\cdot(\rho_i-\id)=(\theta\cdot\xi) u_i-\theta\]
are linearly independent, where $\id$ is the $n\times n$ identity matrix. We then define coefficients $\gamma_{ij}\in \k$ by
\[ e_k\cdot\rho_i=\theta\cdot(\rho_k-\id)\cdot\rho_i=\theta\cdot(\rho_i-\id)\cdot\rho_i=(\theta\cdot\xi)\mu
u_i=\sum_j\gamma_{ij} e_j\]
independently of $k$. We arrive at a generalised calculus as a free module with left basis $\{e_i\}$,
\[ \tilde \Omega^1=\Span_{A}\{e_i\},\quad \extd x_i=x_i e_i, \quad e_k x_i=x_i \sum_j \gamma_{ij} e_j \]
independently of $k$. We specify $\extd$ and the bimodule via its relations.

(b) If we are in the generic situation where $v_i:=\sum_{j\ne i}\gamma_{ij}e_j\ne 0$ for all $i=1,\cdots,n$, the image
subcalculus has the form
\[ \Omega^1=\oplus_i \Span_{\k[x_i]^+}\{e_i, x_i v_i\} \]
with bimodule relations such as
\[\extd x_k\  x_i=\gamma_{ii} x_i \extd x_i + x_i^2v_i\]
independently of $k$. The left hand side is also $\extd x_i^2-x_i\extd x_i$,   which expresses $x^2_iv_i$  in terms of elements
of the form $a\extd b$. Note that $z_{ij}:=x_i-x_j$ acting from the left annihilates all of $\Omega^1$, so the $\{\extd x_i\}$
are not a left basis. Also observe that the bimodule relations are indeed compatible with the grading, for example  $\extd x_k\
x_i$ has grade $x_kx_i=x_i^2$ independently of $k$.  Both features are very different from our previous construction via
Proposition~\ref{procalc}. Indeed, none of the 4-parameter calculi in Section~\ref{secA2calc} on $\cA(\k,2)$ are compatible with
diagonal grading by the monoid.

\subsection{FRT bialgebra and covariance of $\cA(\k,X,r_f)$}

If $(X,r)$ is a braided set, we let $V=\k X$ and extend $r$ by linearity to a map $\Psi:V\tens V\to V\tens V$, where we identify
$V\tens V=\k X\times X$ in the obvious way. If $\{x_i\}$ is an enumeration of the elements of $X$ then this takes the form
\begin{equation}\label{PsiR} \Psi(x_i\tens x_j)=\sum_{a,b}x_b\tens x_a R^a{}_i{}^b{}_j= f(x_j)\tens x_j= \sum_{a,b}f_{jb}x_b\tens
\delta_{ja}x_a\end{equation}
in the permutation idempotent case, where $R^a{}_i{}^b{}_j\in \k$ is the corresponding R-matrix in the conventions of
\cite{MajidQG} and we use the specific form of $r=r_f$ with $f(x_j)=\sum_b f_{jb}x_b$ for coefficients $f_{jb}\in \k$. Comparing,
we see that
\[ R^a{}_i{}^b{}_j=f_{jb}\delta_{ja}.\]

Next, associated to an R-matrix one can define a quadratic algebra $\check V(R)$ with generators $x_i$ and relations
\begin{equation}\label{xxR} x_ix_j=x_bx_a R^a{}_i{}^b{}_j= x_bx_af_{ab}\delta_{aj}=f(x_j)x_j,\quad\forall i,j\end{equation}
in the permutation idempotent case. We obtain here $\cA(\k,X,r_f)$ or $\cA(\k,n)$ for $f=\id$.  Moreover, this is necessarily a
comodule algebra via the algebra map
\begin{equation}\label{coact} x_i\mapsto \sum_a x_a\tens t^a{}_j\end{equation}
under the FRT bialgebra with $n^2$ generators $\{t^i{}_j\}$ and FRT relations\cite{FRT}
\[ \sum_{a,b}R^i{}_a{}^k{}_bt^a{}_j t^b{}_l=\sum_{a,b} t^k{}_b t^i{}_a R^a{}_j{}^b{}_l.\]
These translate in our case to the relations
\[ f_{ki} (\sum_a t^a{}_j)t^k{}_l= \sum_b t^k{}_b f_{lb} t^i{}_l\]
for all $i,j,k,l$ or
\begin{equation}\label{Bnrelns} \delta_{ki} (\sum_a t^a{}_j)t^i{}_l=t^k{}_l t^i{}_l\end{equation}
when $f=\id$. The coalgebra on the generators in all cases is
\[ \Delta(t^i{}_j)=\sum_a t^i{}_a\tens t^a{}_j,\quad \epsilon(t^i{}_j)=\delta_{ij}.\]
We denote this bialgebra by $\cB(\k,X,r)$ for any linearised braided set, and in our case of interest for $f=\id$ by $\cB(\k,n)$.
For $n=2$, the latter amounts to the relations
\[ t^{\bar i}{}_l t^i{}_l=0,\quad (t^i{}_l)^2=(\sum_a t^a{}_{\bar l})t^i{}_l\]
for all $i,l$, where $\bar i$ denotes the other index value to $i$. It follows, but is a useful check that to verify directly,
that $\cA(\k,2)$ is covariant under $\cB(\k,2)$, i.e. that the coaction (\ref{coact}) extends as an algebra homomorphism
$\cA(\k,2)\to \cA(\k,2)\tens \cB(\k,2)$.

\begin{pro}\label{procalccov} Suppose in Proposition~\ref{procalc} that the entries of $\rho$ have degree 1 so that
$\rho^j{}_{ik}=\sum_m \rho^j{}_{mik}x_m$ and suppose that $A$ is a comodule algebra under $A(R)$ via (\ref{coact}). Then
$(\Omega^1,\extd)$ is covariant under the coaction iff
\[ \sum_{a,b} \rho^b{}_{kai}t^a{}_j t^b{}_l =\sum_{a,b}t^k{}_b t^i{}_a \rho^l{}_{bja}\]
for all $i,j,k,l$.\end{pro}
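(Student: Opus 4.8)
The plan is to exploit that the calculus built in Proposition~\ref{procalc} is a \emph{free} left $A$-module on $\{\extd x_i\}$, so that the only extra datum is the right action encoded by the bimodule relations (\ref{bimrel}); covariance then reduces to a single compatibility condition, which we compute and match with the displayed identity.

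First I would recall what covariance of $(\Omega^1,\extd)$ means: a right $A(R)$-coaction $\Delta_R:\Omega^1\to\Omega^1\tens A(R)$ under which $\Omega^1$ is a covariant $A$-bimodule (both module maps $A\tens\Omega^1\to\Omega^1$ and $\Omega^1\tens A\to\Omega^1$ are comodule maps, with $A$ coacting by (\ref{coact})) and $\extd$ is a comodule map. Since $\extd$ is a comodule map and $x_i$ coacts as $\sum_a x_a\tens t^a{}_i$, applying $\extd\tens\id$ to the coaction (\ref{coact}) of $x_i$ forces $\Delta_R(\extd x_i)=\sum_a \extd x_a\tens t^a{}_i$ on generators; together with the comodule-map property of the left action and the freeness of $\Omega^1$ as a left module on $\{\extd x_i\}$, this determines a candidate $\Delta_R$ uniquely as a linear map, and that linear map is automatically coassociative, counital, and left-covariant, inherited from $A$ and $A(R)$. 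Hence the only non-automatic requirement is that $\Delta_R$ also intertwine the right action, i.e.\ be consistent with the bimodule relations (\ref{bimrel}); so covariance is equivalent to exactly that single consistency, which I now translate into a condition on the data.

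The computation proceeds by applying $\Delta_R$ to both sides of $(\extd x_i)x_j=\sum_k\rho^j{}_{ik}\extd x_k$, writing $\rho^j{}_{ik}=\sum_m\rho^j{}_{mik}x_m$. Using that the right and left actions are comodule maps, and re-using (\ref{bimrel}) once more on the left-hand side, the left side comes out as $\sum_{a,b}\bigl((\extd x_a)x_b\bigr)\tens t^a{}_i t^b{}_j=\sum_{a,b,c,m}\rho^b{}_{mac}\,x_m\extd x_c\tens t^a{}_i t^b{}_j$ and the right side as $\sum_{a,b,k,m}\rho^j{}_{mik}\,x_b\extd x_a\tens t^b{}_m t^a{}_k$. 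Comparing first the coefficients of a left-basis element $\extd x_p$ (freeness of $\Omega^1$ over $A$), and then the coefficients of the generators $x_1,\dots,x_n$ of $A_1$ (using $\dim A_1=n$), one gets an identity in $A(R)$ which, after a relabelling of the free indices $i,j,k,l$, is precisely $\sum_{a,b}\rho^b{}_{kai}\,t^a{}_j t^b{}_l=\sum_{a,b}t^k{}_b t^i{}_a\,\rho^l{}_{bja}$. Every step is reversible, so this establishes both implications at once.

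The step I expect to be the main obstacle is the index bookkeeping in this last comparison: one must keep straight which slot of $\rho^j{}_{mik}$ is the coefficient index $m$ versus the matrix indices $(i,k)$, and correctly identify which symbol plays the role of the $A_1$-generator when extracting coefficients (on the right-hand side it is the $x_b$ produced by the coaction of $x_m$, not $x_m$ itself), so as to land on the identity exactly as written in the FRT bialgebra $A(R)$ rather than a superficially rearranged version of it. Beyond this careful bialgebra bookkeeping no genuinely new idea is needed; the structural reduction of the second paragraph, once in place, makes the equivalence routine.
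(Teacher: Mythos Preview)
Your proposal is correct and follows essentially the same route as the paper: both arguments force the coaction on $\extd x_i$ by the comodule-map requirement for $\extd$, then apply the coaction to the bimodule relation $(\extd x_j)x_l=\sum_{a,b}\rho^l{}_{bja}x_b\,\extd x_a$ and compare coefficients using that $\{\extd x_i\}$ is a free left basis. Your write-up is in fact a bit more explicit than the paper's about \emph{why} covariance reduces to this single compatibility (the forced coaction on generators plus left-freeness making left-covariance and coassociativity automatic), which is a welcome clarification; the only residual work, as you correctly anticipate, is the index relabelling needed to match your identity $\sum_{a,b}\rho^b{}_{paq}t^a{}_i t^b{}_j=\sum_{a,b}t^p{}_b t^q{}_a\rho^j{}_{bia}$ to the displayed one under $(p,q,i,j)\mapsto(k,i,j,l)$.
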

\begin{proof} The coaction, for the calculus to be covariant, is required to extend to products in such a way that $\extd$ is a
comodule map. Hence the coaction, and then the bimodule relations, applied to the left hand side of $\extd x_j\
x_l=\sum_{a,b}\rho^l{}_{bja}x_b\extd x_a$ give
\[\extd x_j\ x_l\mapsto\sum_{a,b}\extd x_a\ x_b\tens t^a{}_j t^b{}_l=\sum_{a,b,k,i}x_k\extd x_i\tens \rho^b{}_{kai}t^a{}_j
t^b{}_l,\]
while the coaction similary applied to the right hand side of the same equation gives $\sum_{k,i,a,b}x_k\extd x_i\tens t^k{}_b
t^i{}_a \rho^l{}_{bja}$. Since the $\{\extd x_i\}$ are a left basis, we require the condition stated. Conversely, if this holds
then we can extend the coaction to $\Omega^1$ in this way.
\end{proof}

\begin{rmk}\label{remR'} These relations are similar to but not necessarily the same as the FRT relations.  In fact the R-matrix
theory here is part of a general construction\cite{MajidQG} of quantum-braided planes $\check V(R',R)$ associated to a pair of
compatible matrices, where $R'$ is used to define the relations in place of $R$ in (\ref{xxR}). To simply have an $A(R)$-comodule
algebra, one needs some mixed YBE conditions with an outer $R$ on each side replaced by $R'$, so one can simply take $R'=R$ as we
have done above. Also, the category of comodules of $A(R)$ is prebraided (by which we mean that the braiding generated as above
by $R$ need not be invertible) and there are further conditions\cite[Thm.~10.2.1]{MajidQG} for $\check V(R',R)$ to be a Hopf
algebra in this prebraided category, and further conditions (equations (10.61) in the same work) which allow for a canonical
first order calculus. These all apply in the involutive or $q$-Hecke cases (with $R'\propto R$ and $\rho^j{}_{mik}$ also given by
$R$), but not in the case of $\Psi$ idempotent as here.

In particular, one can check by direct calculation that none of the 4-parameter  moduli calculi  $\cA(\k,2)$ in
Section~\ref{secA2calc} nor the monoid-graded calculus in Section~\ref{secmoncalc} for $n=2$ are covariant under $\cB(\k,2)$.
Hence, the construction of differential calculi on $\cA(\k,n)$ covariant under $\cB(\k,n)$ remains open.
\end{rmk}

\subsection{Fermionic YB algebra $\Lambda(\k,X,r_f)$}

In the familiar case of $R$ involutive or $q$-Hecke, one has a further `fermionic'  quadratic algebra which deforms the exterior
rather than the symmetric algebra generated by the $x_i$, and which is again a comodule algebra under $A(R)$, see \cite{MajidQG}.
In the general set-up of $\check V(R',R)$ mentioned in Remark~\ref{remR'}, if $R$ is such that $\Psi$ defined as in the first
expression of (\ref{PsiR}) is idempotent, the required choices for such a `fermionic' braided Hopf algebra are to use $-R$ in
place of $R$ to define the new (pre)braiding $\Psi$ and $R'=R+P$ to define the relations of the quadratic algebra, where
$P^i{}_j{}^k{}_l=\delta^i{}_l\delta^k{}_j$ is the matrix for the flip $V\tens V\to V\tens V$. Calling the quadratic algebra
generators now $\theta_i$ in place of $x_i$, the relations of this `fermionic' YB algebra are
$\theta_i\theta_j=\sum_{a,b}\theta_b \theta_a R^a{}_i{}^b{}_j + \theta_i\theta_j$ and hence
\[ \sum_{a,b}\theta_b \theta_a R^a{}_i{}^b{}_j =0,\quad f(\theta_j)\theta_j=0\]
for all $i,j$ in the case of $R$ linearising $r_f$. Here, $\Psi$ is again not invertible, now being given by
\[ \Psi(\theta_i\tens\theta_j)=-\sum_{a,b}\theta_b\tens \theta_a R^a{}_i{}^b{}_j =-f(\theta_j)\tens \theta_j\]
in the case of $R$ obtained from $r_f$. By \cite[Thm.~10.2.1]{MajidQG}, we have a braided coalgebra and antipode which on
generators has the form
\[ \Delta(\theta_i)=\theta_i\tens 1+1\tens\theta_i,\quad \epsilon(\theta_i)=0,\quad S\theta_i=-\theta_i,\]
where $\Delta$ extends to products provided we allow for $\Psi$ to exchange tensor products. It is easy enough to check this
directly:
\begin{align*} \sum_{a,b}\Delta(\theta_b \theta_a) R^a{}_i{}^b{}_j&=\sum_{a,b}(\theta_b\tens 1+1\tens\theta_b)\underline\cdot
(\theta_a\tens 1+1\tens\theta_a)R^a{}_i{}^b{}_j\\
&=\sum_{a,b}(\theta_b\theta_a\tens 1+1\tens\theta_b\theta_a+ \theta_b\tens\theta_a+
\Psi(\theta_b\tens\theta_a))R^a{}_i{}^b{}_j,\end{align*}
which vanishes precisely when $-\Psi$ is idempotent. The antipode likewise extends to products using $\Psi$. This construction
applies, in particular, for any idempotent solution $(X,r)$ of the set-theoretic braid relations and in that context we call this
$\Lambda(\k,X,r)$, to reflect the skew-symmetric character. Clearly, for $f=\id$, this is just the free algebra on $\{\theta_i\}$
modulo the relations $\theta_i^2=0$
and  $\Psi(\theta_i\tens\theta_j)=-\theta_j\tens\theta_j$.


\end{document}